\newcommand \tR {R_D}
\DeclareMathOperator \re {Re}
\DeclareMathOperator \im {Im}
\DeclareMathOperator \supp {supp}
\DeclareMathOperator \Op {Op}
\DeclareMathOperator \vol {vol}
\DeclareMathOperator \sgn {sgn}
\newtheorem{prop}{Proposition}
\newtheorem{lem}[prop]{Lemma}
\newtheorem{cor}[prop]{Corollary}
\newtheorem{thm}[prop]{Theorem}
\theoremstyle{definition}
\newtheorem{ex}{Example}
\numberwithin{equation}{section}
\numberwithin{prop}{section}
\numberwithin{figure}{section}
\def\arXiv#1{\href{http://arxiv.org/abs/#1}{arXiv:#1}}
\def \Real {{\mathbb R}}
\def \Natural {{\mathbb N}}
\def \zhath {\hat{Z}_h}
\DeclareMathAlphabet{\mathpzc}{OT1}{pzc}{m}{it}
\def \pr {\mathpzc{p}}
\author{T. J. Christiansen}
\address{Department of Mathematics, University of Missouri, Columbia, MO, USA}
\email{christiansent@missouri.edu}
\author{K. Datchev}
\address{Department of Mathematics, Purdue University, West Lafayette, IN, USA}
\email{kdatchev@purdue.edu}
\title[Resolvent estimates on cylindrical manifolds and on the half line]{Resolvent estimates on asymptotically cylindrical manifolds and on the half line}
\begin{document}

\begin{abstract}

Manifolds with infinite cylindrical ends have continuous spectrum of increasing multiplicity as energy grows, and in general embedded resonances (resonances on the real line, embedded in the continuous spectrum) and embedded eigenvalues can accumulate at infinity. However, we prove that if geodesic trapping is sufficiently mild, then the number of embedded resonances and eigenvalues is finite, and moreover the cutoff resolvent is uniformly bounded at high energies. We obtain as a corollary the existence of resonance free regions near the continuous spectrum.

 We also obtain improved  estimates when the resolvent is cut off away from part of the trapping, and  along the way we prove some resolvent estimates for repulsive potentials on the half line which may be of independent interest. 

\end{abstract}
\maketitle

\section{Introduction}

\subsection{Resolvent estimates for manifolds with infinite cylindrical ends}\label{s:introex}

The high energy behavior of the Laplacian on a manifold of infinite volume is, in many situations, well known to be related to the geometry of the \textit{trapped set}; this is the set of bounded maximally extended geodesics. In the best understood cases, such as  when the manifold has asymptotically Euclidean or hyperbolic ends (see \cite[\S3]{z:sur} for a recent survey),  the trapped set is compact. Some results have been obtained for more general trapped sets (e.g. manifolds with cusps were studied in \cite{cv}) but less detailed information is available.

In this paper we study manifolds with infinite asymptotically cylindrical ends, which have noncompact trapped sets. A motivation for this study comes from waveguides and quantum
dots connected to leads. The spectral geometry of these is closely related to that of asymptotically cylindrical manifolds, and they appear in certain models of
electron motion in semiconductors and of propagation of electromagnetic and sound waves. We give just a few pointers to the physics and applied math literature here  \cite{lcm,raichel, rbbh,ek,bgw}. In \cite{cdwg}, we prove analogues of some of the results below for suitable (star-shaped) waveguides.

The fundamental example of a manifold with cylindrical ends is the Riemannian product $\mathbb R \times \mathbb S^1$, which has an unbounded trapped set consisting of the circular geodesics. We are interested in the behavior of the resolvent of the Laplacian (and its meromorphic continuation, when this exists) for perturbations of such cylinders and their generalizations. As we discuss below, this behavior can sometimes be very complicated, but we show that if some geometric properties of the manifold are favorable, then the resolvent is uniformly bounded at high energy. In the companion paper \cite{cd}, we study the closely related problem of long time wave asymptotics on such manifolds.

We begin with an illustration of a more general theorem to follow, by stating a high energy resolvent estimate for two kinds of \textit{mildly trapping} manifolds $(X,g)$ with infinite cylindrical ends.

\begin{ex}\label{ex:non1}
Let $(r,\theta)$ be polar coordinates in $\mathbb R^d$ for some $d\ge 2$, and let
\[
  X = \mathbb R^d, \qquad g_0 = dr^2 + F(r) dS,
\]
 where $dS$ is the usual metric on the unit sphere, $F(r) = r^2$ near $r=0$, and $F'$ is compactly supported on some interval $[0,R]$ and positive on $(0,R)$; see Figure \ref{f:cigar}. 
\begin{figure}[h]
\hspace{3cm}
\includegraphics{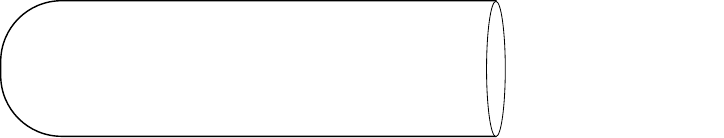}
 \caption{A cigar-shaped warped product.}\label{f:cigar}
\end{figure}

Then for $r(t)>0$ all $g_0$-geodesics obey
\[ \ddot r(t) := \frac{d^2}{dt^2} r(t) = 2 |\eta|^2 F'(r(t))F(r(t))^{-2}\ge0,\]
 where $r(t)$ is the $r$ coordinate of the geodesic at time $t$ and $\eta$ is the angular momentum. Consequently, the only trapped geodesics 
are the ones with $\dot r(t) \equiv F'(r(t)) \equiv 0$, that is the circular ones in the cylindrical end. This is the smallest amount of trapping a manifold with a cylindrical end can have.

Let $g$ be any metric such that $g-g_0$ is supported in  $\{(r,\theta)\mid r < R\}$, and such that $g$ and $g_0$ have the same trapped geodesics. 
For example we may take $g = g_0 + c g_1$, where $g_1$ is any symmetric two-tensor with support in $\{(r,\theta)\mid r < R\}$, and $c \in \mathbb R$ is chosen sufficiently small depending on $g_1$. 
Alternatively, we may take $g = dr^2 + g_S(r)$, where $g_S(r)$ is a smooth family of metrics on the sphere such that $g_S(r) = r^2dS$ near $r=0$ and $g_S(r) = F(r)dS$ near $r \ge R$, and such that $\partial_r g_S(r) > 0$ on $(0,R)$. This way we can construct examples where $g-g_0$ is not small. 

\end{ex}

\begin{ex}\label{ex:3fun1}

Let $(X,g_H)$ be a convex cocompact hyperbolic surface, such as the symmetric hyperbolic `pair of pants' surface with three funnels depicted in Figure \ref{f:3fun}.
\begin{figure}[h]
\labellist
\small
\pinlabel $r$ [l] at 280 -3
\pinlabel $\cosh^2\!r$ [l] at 225 70
\pinlabel $F(r)$ at 275 36
\endlabellist
 \includegraphics[width=5cm]{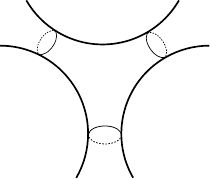} \hspace{2cm} \includegraphics[width=7cm]{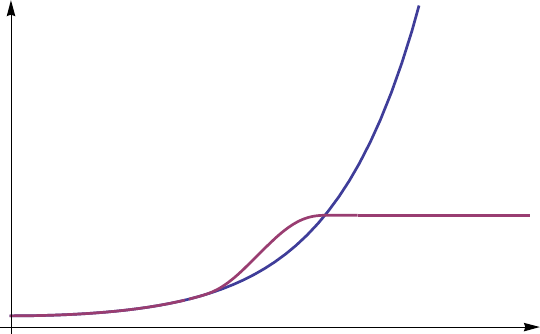}
 \caption{A hyperbolic surface $(X, g_H)$ with three funnels, and a modification of the metric which changes the funnel ends to cylindrical ends.}\label{f:3fun}
\end{figure}

In particular, there is a compact set $N \subset X$ (the convex core of $X$) such that 
\[
 X\setminus N = (0,\infty)_r\times Y_y, \qquad g_H|_{X \setminus N} = dr^2 + \cosh^2 \!r\, dy^2,
\]
where $Y$ is a disjoint union of $k\ge1$ geodesic circles (possibly having different lengths).

We modify the metric in the funnel ends so as to change them into cylindrical 
ends in the following way. Take $g$ such that 
\[
g|_N = g_H|_N, \qquad g|_{X \setminus N} = dr^2 + F(r)dy^2,
\] where $F(r) = \cosh^2\!r$ near $r=0$, and  $F'$ is compactly supported and positive on the interior of the convex hull of its support.

To obtain higher dimensional examples, we can take $(X,g_H)$ to be a conformally compact manifold of constant negative curvature, with dimension $d \ge 3$, but in this case we need the additional assumption that the dimension of the limit set is less than $(d-1)/2$. The construction of $g$ now becomes  more complicated and we give it in \S\ref{s:hypex} below.

\end{ex}

Our first result concerns only the above examples.

\begin{thm}\label{t:resestx}
 Let $(X,g)$ be as in Example \ref{ex:non1} or \ref{ex:3fun1}  above, and let $\Delta \le 0$ be its Laplacian. There is $z_0 >0$ such that for any $\chi \in C_c^\infty(X)$ there is $C>0$ such that
\begin{equation}\label{e:mainex}
 \|\chi(-\Delta - z)^{-1}\chi\|_{L^2(X) \to L^2(X)}\le C,
\end{equation}
for all $z \in \mathbb C$ with $\re z \ge z_0$ and $\im z \ne 0$.
\end{thm}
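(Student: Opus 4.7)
The plan is to split the problem into two complementary pieces: a family of one-dimensional half-line problems describing the behavior in the cylindrical ends, and a cutoff high-energy estimate on the transition region / convex core, and then to glue them via a partition of unity and a parametrix construction.

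In the cylindrical end (where the metric is the exact product $dr^2 + F(R)\,dS$ in Example~\ref{ex:non1}, or $dr^2 + F(R)\,dy^2$ in Example~\ref{ex:3fun1}), the cross-sectional Laplacian has an orthonormal eigenbasis $\{\phi_j\}$ with eigenvalues $\mu_j^2$. Expanding $u = \sum_j u_j(r)\phi_j(y)$ reduces $(-\Delta - z)u = f$ to the family
\[
    (-\partial_r^2 + V_j(r) - z)\,u_j = f_j, \qquad r \ge 0,
\]
where, after conjugation that puts the radial operator in Schr\"odinger form, $V_j(r) = W(r) + \mu_j^2/F(r)$ with $W$ a bounded background potential depending on $F$ and its derivatives. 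The hypothesis $F' \ge 0$ on $[0,R]$ and $F' \equiv 0$ on $[R,\infty)$ makes the dominant part $\mu_j^2/F(r)$ monotone decreasing on $[0,R]$ and constant thereafter; this is precisely the analytic shadow of the mild-trapping hypothesis, since a monotone decreasing radial potential is the hallmark of a repulsive classical force. Accordingly, $\partial_r$ acts as a positive commutator, as
\[
    [-\partial_r^2 + V_j,\ \partial_r] = -V_j'(r) \ge 0
\]
modulo the lower-order $W'$ term, which can be absorbed at high frequency. Together with the outgoing or evanescent boundary condition at infinity (depending on the sign of $z - \mu_j^2/F(R)$), a Mourre / positive commutator argument then produces a cutoff half-line resolvent estimate uniform in $j$ and in $z$ with $\re z \ge z_0$. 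These are the repulsive-potential half-line estimates referred to in the abstract.

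For the compact core the strategy is different in the two examples: in Example~\ref{ex:non1} the geodesic flow is nontrapping on $\{r \le R\}$ (the only trapped set lies at $r=R$, in the end), so an escape function together with a radial / propagation-of-singularities argument yields an $O(1)$ high-energy bound on the cutoff resolvent; in Example~\ref{ex:3fun1} the core trapped set is the fractal trapped set of the underlying convex cocompact hyperbolic manifold, for which the limit-set dimension hypothesis supplies a polynomial high-frequency resolvent bound via results of Naud, Stoyanov, and Dyatlov--Zahl. The core and end estimates are then combined via a partition of unity around $r=R$: one builds an approximate right inverse by summing the exterior channel solution and the interior resolvent with appropriate cutoffs, and a standard resolvent identity removes the compactly supported commutator error.

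The main obstacle I anticipate is achieving uniformity in $z$ across the channel thresholds $z = \mu_j^2/F(R)$, where a given channel switches from propagating to evanescent and, \emph{a priori}, embedded resonances or eigenvalues could accumulate along the continuous spectrum as $j$ varies. To rule this out, one must upgrade the half-line estimate to be uniform as the classical turning point moves across $[0,R]$; this relies on the strict monotonicity $V_j' < 0$ on the interior of $[0,R]$ (so that the turning point actually lies in the transition region, not at its endpoints) together with a quantitative local Airy-type model near the turning point. Establishing this threshold-uniform half-line estimate, rather than any one piece in isolation, is what I expect to be the technical heart of the proof.
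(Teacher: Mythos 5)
Your high-level architecture matches the paper's: conjugate the end Laplacian to a family of half-line Schr\"odinger operators with repulsive effective potentials, prove commutator-type resolvent estimates uniform in the cross-sectional mode, combine with a high-frequency bound for the compact core (nontrapping for Example~\ref{ex:non1}, hyperbolic/fractal trapped set for Example~\ref{ex:3fun1}), and glue via a parametrix. But there are two points at which the proposal as written would not close, and they are essentially the two places where the paper's actual argument does something different and more delicate.

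First, the commutator $[-\partial_r^2 + V_j,\,\partial_r] = -V_j'$ is only nonnegative; it \emph{degenerates} on the trapping region $r\ge R$, where $V_j'\equiv 0$. That is exactly the region responsible for potential accumulation of embedded eigenvalues, so the naive $\partial_r$ commutant gives you nothing there. The paper replaces $\partial_r$ by $w(r)\partial_r$ with $w$ bounded and slowly strictly increasing (see the proof of Theorem~\ref{t:half}), so that $w'>0$ everywhere compensates the vanishing of $V_j'$; completing the square then produces an estimate that is uniform in the size of $V_j$ (the coupling constant) and in the spectral parameter simultaneously. This uniform repulsive-potential estimate is the genuinely new ingredient, and it does not follow from the Mourre-type observation $-V_j'\ge 0$ alone.

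Second, the turning-point threshold problem you flag is real, but the Airy-model resolution is not what the paper does, and I expect it to be hard to make uniform in $j$ as the turning point sweeps across the transition region. The paper \emph{avoids} turning-point analysis entirely: it fixes a threshold $E_*$ and splits the modes. When $E_j := E_0 - h^2\sigma_j^2 \le E_*$, condition \eqref{e:estarimp} forces the gluing region $\{r<5\}$ to lie deep in the classically forbidden zone, and Agmon estimates together with the uniform repulsive estimates (Proposition~\ref{p:resestjbig}) suffice. When $E_j > E_*$ the energy is bounded below and a propagation-of-singularities argument, exploiting the dynamical monotonicity $\dot\rho\ge 0$, controls the remainders without ever touching a turning point. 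Relatedly, the background term $h^2f''/f$ that you want to ``absorb at high frequency'' does not absorb trivially; it is handled via a Neumann-series resolvent identity in Proposition~\ref{p:resestjbig}, again resting on the uniform repulsive estimate.

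Two smaller points: for Example~\ref{ex:3fun1} the paper invokes Bourgain--Dyatlov (for $d=2$) and Nonnenmacher--Zworski (for $d\ge 3$ with small limit set), and the gluing is not a routine resolvent identity --- it requires the semiclassically outgoing property of both model resolvents (\cite{dv}), which once more comes from the same $\dot\rho\ge 0$ monotonicity. The whole argument is run in semiclassical scaling $z = E_0/h^2$ through the abstract complex-absorbing-barrier framework of Theorem~\ref{t:cont}, rather than directly on $(-\Delta - z)^{-1}$ at high energy.
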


Here $(-\Delta - z)^{-1}$ denotes the standard resolvent which maps $L^2(X) \to L^2(X)$, and not its meromorphic continuation. Below, in Theorem \ref{thm:resonancefree}, we also obtain bounds for the meromorphic continuation, but these are more complicated to state.

The bound \eqref{e:mainex} is optimal in the sense that we cannot replace the right hand side by a function of $z$ which tends to $0$ as  $\re z \to \infty$. Indeed, taking the case of Example \ref{ex:non1} with $d=2$ for definiteness, we have $(-\Delta - k^2) v(r)e^{ik\theta} = -v''(r)e^{ik\theta}$ for any $v \in C_c^\infty((R,\infty))$ and $k \in \mathbb Z$.

Note also that the resolvent in these examples is better behaved than it is for the (geometrically simpler) Riemannian product $(X,g) = (\mathbb R \times Y, g = dr^2 + g_Y)$, where $(Y,g_Y)$ is a compact Riemannian manifold. Indeed, take $\chi \in C_c^\infty(X)$ a function of $r$ such that $\chi \ge 0$ and $\chi \not\equiv 0$, and take $\chi_0 \in C_c^\infty(X)$ such that $\chi_0\chi = \chi$, and let $\phi$ be an eigenfunction of the Laplacian on $(Y, g_Y)$ with  $-\Delta \phi = \sigma^2 \phi$. Then, by separation of variables,
\begin{equation}\label{e:prodcomp}
\begin{split}
\|\chi(-\Delta - z)^{-1}\chi \chi_0 \phi\|_{L^2(X)} &=  \|\phi\|_{L^2(Y)} \|\chi(-\partial_r^2 - z + \sigma^2)^{-1} \chi\|_{L^2(\mathbb R)} 
\xrightarrow{z \to \sigma^2} +\infty, 
\end{split}\end{equation}
where we take the limit using the explicit formula for the resolvent \cite[(2.2.1)]{dz}. For our proof of Theorem \ref{t:resestx} it will be crucial that $F'>0$ near the cylindrical ends in Examples \ref{ex:non1} and \ref{ex:3fun1}, and this is what is missing in the Riemannian product just discussed.

We will deduce Theorem \ref{t:resestx} from Theorem \ref{t:cont} below, which  gives a stronger result (allowing $\chi$ to be replaced by a noncompactly supported weight) and also applies to Schr\"odinger operators on more general manifolds with asymptotically cylindrical ends.
We will further prove in Theorem \ref{t:away} that we can obtain stronger resolvent bounds by suitably refining the cutoffs $\chi$. 

An estimate like \eqref{e:mainex} has well-known implications for the spectrum of $-\Delta$. In particular, by  \cite[Theorem XIII.20]{rs}, the spectrum is purely absolutely continuous on $(z_0,\infty)$, which rules out any embedded eigenvalues there, and we will see below, in \S\ref{s:continuation}, that embedded resonances (resonances on the real line, embedded in the continuous spectrum) are also ruled~out.

To our knowledge ours is the first result ruling out the presence of infinitely many embedded eigenvalues or resonances for a large class of examples of manifolds with infinite cylindrical ends.

The situation can be very different for other manifolds with cylindrical ends. For example, let $X = \mathbb R \times Y$ and $g=dr^2 + F(r)g_Y$, where $(Y,g_Y)$ is a compact Riemannian manifold and  $F \in C^\infty(\mathbb R;(0,\infty))$, $1-F$ is compactly supported, and $\max F > 1$. Then $-\Delta$ has infinitely many embedded eigenvalues converging to $+\infty$ (\cite[\S 3]{cz}, \cite[(3.6)]{parn}).

The study of the spectral and scattering theory of the Laplacian on manifolds with cylindrical ends, and their perturbations, goes back to Guillop\'e \cite{gui} and Melrose \cite{mel} and is an active and wide-ranging area of research: see for example \cite{ikl, ms, rtda} for some recent results and more references. There is also a large of body of literature on the closely related study of the Laplacian on waveguides: something of a survey can be found in \cite{kk}, and let us also mention the older result \cite{go}, and that there is a nonexistence result for eigenvalues in \cite{dp}. In a slightly different direction, weighted resolvent estimates up to the spectrum  and limiting absorption principles have been investigated using Mourre theory \cite{mourre, abg,dg}, and this has been applied to geometric situations such as ours in \cite{nier}.

Our results also have implications for the distribution of \textit{resonances}; these are the poles of the meromorphic continuation of the resolvent, and their study in this context also goes back to \cite{gui, mel}. An existence result for resolvent poles (in the presence of appropriate quasimodes, and which may be embedded in the real line or complex) on waveguides can be found in \cite{e}, and for more such results see \cite{kk}. Upper bounds on the number resonances for manifolds with infinite cylindrical ends  are given in \cite{c0}.

In Theorem \ref{thm:resonancefree}, we will use an identity due to Vodev \cite{v} to prove that \eqref{e:mainex} (or a more general resolvent estimate up to the spectrum) implies the existence of a resonance free region near the continuous spectrum. In a companion paper to this one, \cite{cd}, we use these results to prove an asymptotic expansion for solutions to the wave equation.

\subsection{Repulsive potentials on the half line}\label{s:introhalf}
In this paper we also obtain some resolvent estimates for Schr\"odinger operators on the half line which we need in the course of the proofs of our main results, and which may  be of independent interest. We state them here.

Let $V_D$ be a bounded, nonnegative, nonincreasing potential on the half line, such that
\begin{equation}\label{e:vmhyp}
V_D'(r) \le - \delta_V (1+r)^{-1} V_D(r) \le 0,
\end{equation}
for some $\delta_V>0$ and for all $r \ge 0$, where if $V_D$ is not everywhere differentiable then \eqref{e:vmhyp} is meant in the sense of measures. Note that in particular the potential is repulsive in the sense of classical mechanics, since $V_D'(r) < 0 $ except where $V_D(r) = 0$.

For $h>0$ and $\zeta \in \mathbb C \setminus [0,\infty)$ let
\[
 (-h^2 \partial_r^2 + V_D - \zeta)^{-1}
\]
denote the Dirichlet resolvent.
In this paper we prove the following semiclassical resolvent estimates:

\begin{thm}\label{t:semi}
 For all $s, \ s_1, \ s_2>1/2$ with $s_1 + s_2 > 2$ there is $C>0$ such that for all  $\zeta \in \mathbb C \setminus [0,\infty)$ and $h>0$ we have
\begin{equation}\label{e:pdbigh}
 \|(1 + r)^{-s}(-h^2 \partial_r^2 + V_D(r) - \zeta)^{-1}(1+r)^{-s}\| \le \frac{C}{h\sqrt{|\zeta|}}, 
\end{equation}
\begin{equation}\label{e:pdsmallh}
 \|(1 + r)^{-s_1}(-h^2 \partial_r^2 + V_D(r) - \zeta)^{-1}(1+r)^{-s_2}\| \le \frac C {h^2},
\end{equation}
and
\begin{equation}\label{e:pdvboundh}
 \|V_D(r)^{1/2}(1+r)^{- 1/2} (-h^2 \partial_r^2 + V_D(r) - \zeta)^{-1}(1+r)^{-s }\|\le \frac C h,
\end{equation}
where the norms are $L^2(\mathbb R_+) \to L^2(\mathbb R_+)$.
\end{thm}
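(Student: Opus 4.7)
The plan is a standard multiplier (Morawetz-type) argument, driven by the positivity of the commutator of $P_h = -h^2\partial_r^2 + V_D$ with the dilation generator $A = \tfrac{1}{2i}(r\partial_r + \partial_r r)$. A direct computation gives
\[
  [P_h, iA] = -2h^2\partial_r^2 - rV_D',
\]
which is formally nonnegative since $V_D$ is nonincreasing, and the repulsivity hypothesis \eqref{e:vmhyp} strengthens the potential piece to $-rV_D' \geq \delta_V\, r(1+r)^{-1} V_D \geq 0$. The Dirichlet condition at $r=0$ will contribute an additional favorable boundary term in the weighted identity below.

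Concretely, let $u$ solve $(P_h-\zeta)u=f$ on $\mathbb R_+$ with $u(0)=0$, and let $\phi \in C^\infty([0,\infty);[0,\infty))$ be a bounded nondecreasing weight to be chosen. Multiplying the equation by $2\phi\bar u'$, integrating by parts, and taking real parts yields the Morawetz identity
\[
  \int_0^\infty \phi'\bigl(|hu'|^2 + (\re\zeta - V_D)|u|^2\bigr)\,dr - \int_0^\infty \phi V_D'\,|u|^2\,dr + h^2\phi(0)|u'(0)|^2 + 2\,\im\zeta\!\int_0^\infty\!\phi\,\im(u\bar u')\,dr = 2\,\re\!\int_0^\infty\! f\phi\bar u'\,dr.
\]
Three terms on the left have definite positive sign: $\int\phi'|hu'|^2$, $-\int\phi V_D'|u|^2 \geq \delta_V\int\phi(1+r)^{-1}V_D|u|^2$, and the boundary term $h^2\phi(0)|u'(0)|^2$. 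The sign-indefinite piece $-\int\phi'V_D|u|^2$ is dominated by $V_D(0)\int\phi'|u|^2$ and absorbed, and the $\im\zeta$-term is handled by Cauchy-Schwarz together with the basic identity $\im\zeta\cdot\|u\|^2 = -\im\langle f,u\rangle$.

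The three estimates are then produced by specializing $\phi$. For \eqref{e:pdbigh} take $\phi$ bounded with $\phi'(r)=(1+r)^{-2s}$ (valid for $s>1/2$): when $\re\zeta$ is large positive, the $\re\zeta\int\phi'|u|^2$ term dominates and Cauchy-Schwarz on the right yields $|\zeta|\cdot\|(1+r)^{-s}u\|^2 \lesssim h^{-2}\|(1+r)^s f\|^2$, giving the claimed $O(1/(h|\zeta|^{1/2}))$ bound; when $\re\zeta\leq 0$ or $|\im\zeta|\gtrsim|\re\zeta|$, the basic identity and Cauchy-Schwarz yield the same gain. For \eqref{e:pdsmallh} the $|\zeta|^{1/2}$ gain is not available, but $\int\phi'|hu'|^2$ combined with an asymmetric Cauchy-Schwarz pairing $(1+r)^{-s_1}$ on $u$ against $(1+r)^{s_2}$ on $f$ (with $s_1+s_2>2$ ensuring the relevant weight products are integrable) yields the uniform $O(h^{-2})$ bound. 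For \eqref{e:pdvboundh}, the left-hand side already contains $\delta_V\int\phi(1+r)^{-1}V_D|u|^2\,dr \gtrsim \|V_D^{1/2}(1+r)^{-1/2}u\|^2$ (taking $\phi$ bounded below on $[0,\infty)$), and a single Cauchy-Schwarz on the right with weight $(1+r)^{-s}$ on $f$ yields the $O(h^{-1})$ bound; the improvement from $h^{-2}$ to $h^{-1}$ reflects the fact that $V_D^{1/2}$ appears on only one side.

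The main obstacle is the careful choice of $\phi$ and the absorption of error terms: $\phi$ must be simultaneously bounded (so that the source side only involves $(1+r)^{+s}$ on $f$), have $\phi'$ of prescribed polynomial decay, and produce $\im\zeta$- and $\phi'''$-type error terms (the latter appearing if one uses the more symmetric multiplier $2\phi u'+\phi' u$ in place of $2\phi u'$) that can be absorbed with the correct powers of $h$ and $|\zeta|$. The asymmetric-weight case $s_1\ne s_2$ of \eqref{e:pdsmallh} requires either a bootstrap from the symmetric case or a slightly refined choice of $\phi$. Finally, the integration by parts and the uniformity as $\im\zeta \to 0$ are standard, justified by a density / limiting-absorption argument on compactly supported approximations of $u$.
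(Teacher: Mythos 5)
Your overall strategy --- a Morawetz-type multiplier argument with a bounded, nondecreasing weight $\phi$, motivated by the positive commutator $[P_h, w(r)\partial_r]$ --- is the route the paper takes: the paper's commutant is $w(r)\partial_r$ with $w \in C^1([0,\infty);[0,1])$, $w'>0$, and it derives the same weighted identity (with the same favorable boundary term at $r=0$). In fact the paper first proves the sharpened form with explicit constants (its Theorem \ref{t:half}) and then obtains Theorem \ref{t:semi} by writing $-h^2\partial_r^2 + V_D - \zeta = h^2(-\partial_r^2 + h^{-2}V_D - h^{-2}\zeta)$ and applying that estimate with $V_D$ replaced by $h^{-2}V_D$, which satisfies \eqref{e:vmhyp} with the same $\delta_V$.

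Two points in your plan need repair. First, the step "$-\int\phi' V_D |u|^2$ is dominated by $V_D(0)\int\phi'|u|^2$ and absorbed" does not work uniformly in $V_D(0)$, which can be arbitrarily large (the paper stresses that uniformity in $V_D(0)$ and $|\zeta|$ is its main novelty, and its applications precisely need $V_D(0)$ large compared to $|\zeta|$). None of your three positive terms can absorb a factor of $V_D(0)$. The correct move --- which the paper makes --- is to absorb $-\int\phi' V_D|u|^2$ into the \emph{other} potential term $-\int\phi V_D'|u|^2$ by choosing $\phi$ so that $\phi' V_D + \phi V_D' \le 0$ pointwise; by \eqref{e:vmhyp} it suffices that $\phi' \le \delta_V (1+r)^{-1}\phi$, which holds for $\phi' \sim (1+r)^{-2s}$, $s>1/2$, as long as $\phi(0)$ is chosen at least of order $1/\delta_V$. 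Second, your handling of $\re\zeta\le 0$ is insufficient: the basic identity and Cauchy--Schwarz only give the trivial bound $\lesssim 1/\operatorname{dist}(\zeta,[0,\infty)) \sim 1/|\zeta|$, which implies $C/(h\sqrt{|\zeta|})$ only when $|\zeta|\gtrsim h^2$; for smaller $|\zeta|$ it is \emph{weaker} than the target. Moreover, the Morawetz identity itself provides no positive $\re\zeta\int\phi'|u|^2$ contribution once $\re\zeta<0$. The paper closes this gap by applying the Phragm\'en--Lindel\"of principle to the normalized matrix element $U(z) = \langle(1+r)^{-s}(P_D-z)^{-1}(1+r)^{-s}u,v\rangle\sqrt{z}$ on the sectors $\Omega_\alpha = \{\alpha\re z < |\im z|\}$, using the a priori estimate only on $\partial\Omega_\alpha$ (where $\re z>0$), and then sending $\alpha\to 0$ --- a step your plan omits.
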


Recall that, in the case $V_D \equiv 0$, \eqref{e:pdbigh} and \eqref{e:pdsmallh} are well known to be sharp as dist$(\zeta,[0,\infty)) \to 0$; this can be checked from the explicit formula for the resolvent in that case, which we give below in \eqref{e:resdhalf}.

In fact, we will deduce these estimates from some uniform estimates for Schr\"odinger operators with repulsive potentials, replacing $C$ by an explicit constant. To state them, let
\[
 P_D := -\partial_r^2 + V_D(r),
\]
regarded as a self-adjoint operator on $L^2(\mathbb R_+)$ with domain $\{u \in H^2(\mathbb R_+) \mid u(0) = 0\}$.

\begin{thm}\label{t:half}
For all $\delta >0$,  $\theta \in [0,1]$, and  $z \in \mathbb C \setminus [0,\infty)$, we have
\begin{equation}\label{e:pdbig}
 \|(1 + r)^{-\frac{1 + \delta}2}(P_D -z)^{-1}(1+r)^{-\frac{1 + \delta}2}\| \le \frac{1 + \sqrt 2}{\sqrt{|z|}}\left( \frac 1 \delta + \frac 1{\delta_V}\right), 
\end{equation}
\begin{equation}\label{e:pdsmall1}
 \|(1 + r)^{-\frac{1 + \delta}2 - \theta}(P_D -z)^{-1}(1+r)^{-\frac{1 + \delta}2 - (1-\theta)}\| \le (1 + \sqrt 2)\left( \frac 1 \delta + \frac 1{\delta_V}\right),
\end{equation}
and
\begin{equation}\label{e:pdvbound}
 \|V_D(r)^{\frac \theta 2}(1+r)^{- \frac {1 +(1-\theta)\delta} 2 }(P_D -z)^{-1}V_D(r)^{\frac{1-\theta}2} (1+r)^{- \frac {1 + \theta \delta} 2 }\|\le \frac {2\sqrt2}{ \delta_V} \sqrt{1+\frac{\delta_V}\delta},
\end{equation}
where the norms are $L^2(\mathbb R_+) \to L^2(\mathbb R_+)$.
\end{thm}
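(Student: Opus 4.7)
The plan is to derive all three estimates from a Morawetz-type multiplier identity, combined with auxiliary identities from pairing the equation with $\overline u$. Let $u = (P_D - z)^{-1}g$; by density it suffices to take $g \in C_c^\infty((0,\infty))$, so $u \in H^2(\mathbb{R}_+) \cap H_0^1(\mathbb{R}_+)$ and $u, u'$ decay at infinity. For a smooth real weight $f : [0,\infty) \to [0,\infty)$ with $f(0) = 0$, to be chosen later, I would pair the equation $(-\partial_r^2 + V_D - z)u = g$ with the \emph{symmetric} multiplier $M := f\overline{u'} + \tfrac{1}{2}f'\overline u$ and take the real part of the integrated form. Repeated integration by parts (using $u(0) = 0$ and $L^2$-decay) causes the $\re(z)$-contribution to cancel — this is the point of the $\tfrac12 f'\overline u$ correction — and the $V_D$-cross term to cancel as well, leaving
\begin{equation*}
 2\!\int_0^\infty\! f'|u'|^2\, dr - \tfrac12\!\int_0^\infty\! f'''|u|^2\, dr + \int_0^\infty (-V_D')f|u|^2\, dr + 2\im(z)\!\int_0^\infty\! f\,\im(u\overline{u'})\,dr = 2\re\!\int_0^\infty\! gM\,dr.
\end{equation*}

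Next I would take $f(r) = \delta^{-1}(1 - (1+r)^{-\delta})$, so $f \in [0,\delta^{-1}]$, $f(0) = 0$, and $f'(r) = (1+r)^{-1-\delta}$. The repulsivity gives $\int (-V_D')f|u|^2 \ge \delta_V\int(1+r)^{-1}f V_D|u|^2 \ge 0$, a positive LHS contribution that will yield estimate \eqref{e:pdvbound}. The $\tfrac12\int f'''|u|^2$ term (with $f''' > 0$) is absorbed into $2\int f'|u'|^2$ via the weighted Hardy inequality that follows from $u(0) = 0$, using the sharp form $\int_0^\infty r^{a-2}|u|^2\,dr \le (2/(1-a))^2\int_0^\infty r^a|u'|^2\,dr$ with $a = -1-\delta$ to keep the residual $\int f'|u'|^2$ coefficient strictly positive uniformly in $\delta > 0$. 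The $\im(z)$-term is controlled by the auxiliary identity obtained from pairing with $\psi\overline u$ and taking imaginary parts,
\begin{equation*}
 \im(z)\int_0^\infty\psi|u|^2\,dr - \int_0^\infty \psi' \im(u\overline{u'})\,dr = \im\int_0^\infty\psi g\overline u\,dr;
\end{equation*}
choosing $\psi$ with $\psi' = f$ turns the $\im(z)\int f\,\im(u\overline{u'})$ term into $\im(z)^2 \int\psi|u|^2 \ge 0$ modulo explicit source terms. Applying Cauchy-Schwarz on the right-hand side of the combined inequality and optimizing an AM-GM split (calibrated to yield the $1+\sqrt 2$ prefactor) then produces \eqref{e:pdsmall1} for $\theta = 1/2$; the asymmetric cases $\theta \ne 1/2$ follow from splitting the source weight correspondingly, and \eqref{e:pdvbound} follows by retaining the $V_D$-weighted term on the LHS instead of discarding it.

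For the $|z|^{-1/2}$ improvement in \eqref{e:pdbig}, I would complement this with a third identity coming from pairing the equation with $f'\overline u$ and taking real parts, which introduces $\re(z)\int f'|u|^2$ on the LHS. Combining this $\re(z)$-information with the $\im(z)^2\int\psi|u|^2$ control from the auxiliary identity and the bound on $\int f'|u'|^2$ from the Morawetz identity gives effective control on $|z|\int f'|u|^2$, and a square-root extraction then produces the semiclassical factor $|z|^{-1/2}$. \emph{The main obstacle} is pinning down the sharp constants claimed — $(1+\sqrt 2)(\tfrac1\delta + \tfrac1{\delta_V})$ in \eqref{e:pdbig}, \eqref{e:pdsmall1}, and $\tfrac{2\sqrt 2}{\delta_V}\sqrt{1 + \delta_V/\delta}$ in \eqref{e:pdvbound}: the weighted Hardy absorption, the AM-GM balance on the source, and the separation of the weight-decay contribution ($1/\delta$) from the repulsivity contribution ($1/\delta_V$) all have to be optimized simultaneously, and the weight $f$ must be calibrated so that the residual coefficient of $\int f'|u'|^2$ after Hardy absorption stays strictly positive for every $\delta > 0$.
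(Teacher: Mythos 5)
Your core strategy — a positive-commutator / Morawetz identity with a weight built from $(1+r)^{-\delta}$ and the repulsivity \eqref{e:vmhyp} — is the same as the paper's, which uses the asymmetric commutant $w(r)\partial_r$ with $w = 1 - \tfrac{\delta_V}{\delta_V + \delta}(1+r)^{-\delta}$. Your symmetric multiplier $f\overline{u'} + \tfrac12 f'\overline u$ is a legitimate variant: it cancels the $V_D f'$ cross-term automatically (the paper instead arranges $(wV_D)' \le 0$, which is why $\delta_V$ appears inside $w$), and it cancels the $\re z$ contribution entirely. But that cancellation is in fact a liability here: the paper keeps the $\re z$ term and it emerges as $E\|\sqrt{w'}u\|^2\ge 0$ on the good side of the inequality, which is what directly produces the $\sqrt E$ factor needed for \eqref{e:pdbig}. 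Having cancelled it, you must regenerate the $\sqrt{|z|}$ via the supplementary pairing with $f'\overline u$, and the term $\re(z)\int f'|u|^2$ this produces only helps when $\re z > 0$.

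That points to the genuine gap: you have no mechanism to cover all of $\mathbb C\setminus[0,\infty)$. The paper's a priori estimates (e.g.\ \eqref{e:pdbig0} and \eqref{e:pdsmall0}) are established only for $\re z > 0$, and moreover carry factors like $\sqrt{E+\varepsilon}/\sqrt E$ and $\sqrt{2 + \varepsilon E^{-1}}$ that blow up when $|\im z| \gg \re z > 0$. Both problems are resolved in one stroke by a Phragm\'en--Lindel\"of argument: the function $U(z)$ in \eqref{e:plu} is holomorphic on the sector $\Omega_\alpha$, bounded on its boundary by the a priori estimate, and the maximum-modulus conclusion followed by $\alpha\to 0$ gives the uniform bound on all of $\mathbb C\setminus[0,\infty)$. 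Your sketch never mentions this and cannot reach $\re z\le 0$ (nor the regime $\im z$ large relative to $\re z$) without it. The same device is used again in the paper to interpolate in $\theta$ for \eqref{e:pdsmall1} and \eqref{e:pdvbound}; your ``splitting the source weight correspondingly'' replaces that complex interpolation with something vaguer and would not obviously produce the uniform constant $(1+\sqrt2)(\delta^{-1}+\delta_V^{-1})$. Finally, your weight $f$ contains no $\delta_V$ at all, whereas the theorem's constant is an explicit sum $\delta^{-1}+\delta_V^{-1}$: in the paper this falls out because $1/w' = (\delta^{-1}+\delta_V^{-1})(1+r)^{1+\delta}$, so the weight is calibrated to both parameters. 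Without that calibration your constants will not land where the theorem says they do, and you flag this yourself as ``the main obstacle'' without resolving it.
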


Note that Theorem \ref{t:half} implies Theorem \ref{t:semi}.

If $V_D \in C^1([0,\infty))$ is compactly supported and has $V_D'<0$ on the interior of the support of $V_D$, then \eqref{e:vmhyp} is satisfied for some $\delta_V>0$ (because $\log V_D$ and $(\log V_D)'$ tend to $-\infty$ at the boundary of the support). Moreover the class of potentials satisfying \eqref{e:vmhyp} for a given $\delta_V>0$ is closed under nonnegative linear combinations and contains all functions of the form $(1+r)^{-m}$ with $m\ge \delta_V$. The same proof could also handle potentials $V_D$ satisfying \eqref{e:vmhyp} and such that $V_D(r) \to \infty$ as $r \to 0$, provided $V_D(r)|u(r)|^2 \to 0$ as $r \to 0$ for all $u$ in the domain of $P_D$.

The bounds \eqref{e:pdbigh} and \eqref{e:pdbig} are best when the spectral parameter is not  too close to $0$, and \eqref{e:pdsmallh} and \eqref{e:pdsmall1} are best when the spectral parameter is close to $0$. We can think of \eqref{e:pdvboundh} and \eqref{e:pdvbound} as being a kind of Agmon or elliptic estimate in the limit $|z| \to 0$ (see also \eqref{e:agmonoverlap} below); they give an improvement when we are looking at the resolvent in the elliptic and classically forbidden range in the interior of the support of $V_D$.  When $V_D(r) \sim (1+r)^{-m}$ as $r \to \infty$ for some $m>0$, the weights in \eqref{e:pdvbound} are also to be compared to the weights in \cite{yafbook, nak}; see in particular \cite[Theorem 1.3]{nak}.

If we do not demand explicit constants in the estimates, then Theorem \ref{t:half} is essentially well-known if either $V_D(0)$ (which we can think of as a coupling constant) is not large (see \cite[Chapter 4]{yafbook} for a more general discussion of scattering on the half line, and \cite{kt} for some more recent results and references), or if $V_D(0)$ and $|z|$ are large (this is the semiclassical, nontrapping regime: see \cite[Chapter 7, Theorem 1.6]{yafbook} for a similar result). The main novelty here is that we cover all values of $V_D(0)$ and $|z|$ uniformly, and for our applications in \S\ref{s:mild} we will especially need the case where $V_D(0)$ is large compared to $|z|$: this corresponds to a low-energy semiclassical problem.

We prove Theorem \ref{t:half} in \S\ref{s:halfline} below.

\subsection{Notation}

Throughout the paper $C$ is a large constant which can change from line to line, and all estimates are uniform for $h \in (0,h_1]$, where $h_1$ can change from line to line. It will sometimes be  convenient to write derivatives with respect to $r$ using the notation $D_r:=-i\partial_r$.
We use \[\|u\|_{H^m_h(X)} := \|(-h^2\Delta  + 
1)^{m/2}u\|_{L^2(X)},\]
and similarly define $\|u\|_{H^m_h(\mathbb R)}$ and $\|u\|_{H^m_h(\mathbb R_+)}$ (in the latter case we will only be concerned with $u$ vanishing near $r=0$, so  the boundary condition on the Laplacian implicit in the notation in this case is immaterial).

The energy level $E_0>0$ is fixed in \S\ref{s:assump}, along with the rest of the notation needed for our general abstract setup of a mildly trapping Schr\"odinger operator on a manifold with asymptotically cylindrical ends. The auxiliary notations $E_j$ and $E_*$ are defined in \S\ref{s:xc} in terms of this setup. The notation $E$ without a subscript is used in \S\ref{s:halfline} and \S\ref{s:continuation} to denote a variable positive energy, not related in any particular way to $E_0$ or $E_j$ or $E_*$.

The radial variable $r$ on the cylindrical end has the same meaning in \S\ref{s:assump}, in \S\ref{s:proof}, and in \S\ref{s:continuation}. The usage in \S\ref{s:halfline} is consistent with this usage, if we separate variables to write Schr\"odinger operator on an asymptotically cylindrical end as a sum of Schr\"odinger operators on $\mathbb R_+$. For example, if $\Delta$ is the Laplacian on $((0,\infty)\times Y,dr^2 + g_Y)$ we write
\[
-\Delta = \sum_{j=0}^\infty (-\partial_r^2 + \sigma_j^2) \phi_j \otimes\phi_j, \text{ to mean } -\Delta u = \sum_{j=0}^\infty \phi_j  \int_Y  (-\partial_r^2 + \sigma_j^2) u(r,y) \phi_j(y) d\!\vol (y),
\]
where  $\{ \phi_j\}_{j=0}^\infty$ is a complete set of real-valued orthonormal eigenfunctions of the Laplacian on $Y$ and $-\Delta_Y \phi_j=\sigma_j^2 \phi_j$.

Of course the results of \S\ref{s:halfline} also apply to more general Schr\"odinger operators on $\mathbb R_+$. 

The variable $r$ is used a little differently in \S\ref{s:introex}, \S\ref{s:hypex}, and \S\ref{s:warp}. To convert the $r$ in one of these sections to the $r$ in the rest of the paper, use the affine map
\begin{equation}\label{e:rmapsto}r \mapsto 6(r-R_1)/(R_2-R_1),\end{equation}
for suitably chosen $R_1$ and $R_2$, and then multiply $g$ by $(R_2-R_1)^2/36$ to remove the factor that appears in front of $dr^2$.
For Example \ref{ex:non1}, take $R_1$ such that $\inf\{r>0 \mid g(r,y) = g_0(r,y) \textrm{ for all } y\} < R_1<R$ and use $R_2 = R$. For Example \ref{ex:3fun1}, let $R_2 = \max\supp F'$, and take $R_1 \in (0,R_2)$. For \S\ref{s:hypex}, let $R_1=R+1$ and $R_2 = \max\supp F'$.  For \S\ref{s:warp}, let $R_1 = R/2$ and $R_2=R$.

\setcounter{tocdepth}{1}
\tableofcontents

\section{Resolvent estimates on the half line}\label{s:halfline}

In this section we prove Theorem \ref{t:half}. All function norms and inner products in this section are in $L^2(\mathbb R_+)$, and operator norms are $L^2(\mathbb R_+) \to L^2(\mathbb R_+)$.

\begin{proof}[Proof of \eqref{e:pdbig}]

Let $E := \re z$ and $\varepsilon := |\im z|$. We begin by proving an a priori  estimate when $E>0$ and $\varepsilon>0$. Roughly speaking, the idea is to exploit the fact that, since $V_D' \le 0$, we have the positive commutator  $[P_D,r \partial_r] = - 2  \partial_r^2 - r V_D'(r) \ge 0$. However, to be able to control the remainder terms in our positive commutator argument, we must replace $r \partial_r$ with $w(r)\partial_r$ where $w$ grows more slowly. Such commutants have been used by many authors (see \cite[\S XIII.7]{rs} and references therein); below we take an approach inspired by  \cite{v, d:lim} and papers cited therein.

Take $w \in C^1([0,\infty);[0,1])$ such that $w'(r)>0$ for all $r \ge 0$, 
and take $u \in H^2(\mathbb R_+)$ such that $u(0) = 0$ and $(w')^{-1/2} (P_D - z)u \in L^2$; in particular, $u(r)$ and $u'(r)$  tend to $0$ as  $r 
\to \infty$. Adding together the integration by parts identities
\[
-\langle (w(V_D - E))'u , u \rangle  = 2 \re \langle w(V_D -E) u ,  u'\rangle,
\]
and
\[
 \langle w' u',  u' \rangle + w(0)|u'(0)|^2  = - 2 \re \langle w  u'' ,  u'\rangle,
\]
gives
\[\begin{split}
E \|\sqrt {w'} u\|^2 + \|\sqrt {w'} u'\|^2- \langle (wV_D)'  u,u \rangle  &+ w(0) |u'(0)|^2  =   2 \re \langle w(P_D - z)u,u' \rangle  - 2 \im z \im \langle w u, u' \rangle.
\end{split}\]
Since $0\le w \le 1$, this implies
\begin{equation}\label{e:ewprime}
 E \|\sqrt {w'} u\|^2 + \|\sqrt {w'} u'\|^2 - \langle (wV_D)'  u,u \rangle \le 2\left\|\frac 1 {\sqrt {w'}}(P_D - z)u\right\|\|\sqrt {w'} u'\|  + 2\varepsilon \|u\|\|u'\|.
\end{equation}
Later we will choose $w$ so that $(wV_D)' \le 0$, but first we estimate the second term on the right, which we think of as a remainder term.
Since $V_D \ge 0$, integrating by parts gives
\[
 \|u'\|^2 \le \re \langle(P_D - z)u,u\rangle + E\|u\|^2 \le \left\|\frac 1 {\sqrt {w'}}(P_D - z)u\right\|\|\sqrt {w'} u\| + E\|u\|^2,
\]
and we also have
\[
  \varepsilon\|u\|^2 = | \im \langle(P_D - z)u,u \rangle| \le \left\|\frac 1 {\sqrt {w'}}(P_D - z)u\right\|\|\sqrt {w'} u\|.
\]
Combining these gives
\[
 \varepsilon^2 \|u\|^2\|u'\|^2 \le (E + \varepsilon)\left\|\frac 1 {\sqrt {w'}}(P_D - z)u\right\|^2\|\sqrt {w'} u\|^2,
\]
and then plugging this into \eqref{e:ewprime} gives
\[
  E \|\sqrt {w'} u\|^2 + \|\sqrt {w'} u'\|^2 - \langle (wV_D)'  u,u \rangle \le 2 \left\|\frac 1 {\sqrt {w'}}(P_D - z)u\right\|\left(\|\sqrt {w'} u'\| + \sqrt{E + \varepsilon}\|\sqrt {w'} u\| \right).
\]
Completing the square gives
\begin{equation}\label{e:compsq}\begin{split}
\left(\sqrt E \|\sqrt {w'} u\|  -  \frac{\sqrt{E + \varepsilon}}{\sqrt E} \left\|\frac 1 {\sqrt {w'}}(P_D - z)u\right\|\right)^2  &+ \left( \|\sqrt {w'} u'\| -  \left\|\frac 1 {\sqrt {w'}}(P_D - z)u\right\|\right)^2 \\
- &\langle (wV_D)'  u,u \rangle \le \frac {2E + \varepsilon } {E}\left\|\frac 1 {\sqrt {w'}}(P_D - z)u\right\|^2.
\end{split}\end{equation}
We now take
\begin{equation}\label{e:wdef}w(r) := 1 - \frac {\delta_V}{\delta_V + \delta} (1+r)^{-\delta},
\end{equation}
so that, by \eqref{e:vmhyp}, we have
\begin{equation}\label{e:wvd}
 (wV_D)'(r) = \frac {\delta \delta_V V_D(r) }{(\delta_V + \delta) (1+r)^{1+\delta}}+ w(r)V_D'(r) \le \frac{\delta_V V_D(r)}{1+r}\left((1+r)^{-\delta}  - 1\right) \le 0,
\end{equation}
where, as with \eqref{e:vmhyp}, we understand \eqref{e:wvd} in the sense of measures in the case that $V_D$ is not differentiable everywhere.
We may now drop the second  and third terms from the left hand side  of \eqref{e:compsq}, giving
\begin{equation}\label{e:pdbig0}
 \sqrt E \|\sqrt {w'} u\| \le  \frac{\sqrt{E + \varepsilon} + \sqrt{2E + \varepsilon}} {\sqrt E} \left\|\frac 1 {\sqrt {w'}}(P_D - z)u\right\|.
\end{equation}

From \eqref{e:pdbig0} we can deduce a weighted resolvent estimate when $\re z >0$, $\im z \ne 0$. To obtain an estimate for all $z \in \mathbb C \setminus [0,\infty)$, we use the Phragm\'en--Lindel\"of principle in the following way. For $u, \ v \in L^2(\mathbb R_+)$, put
\begin{equation}\label{e:plu}
 U(z):= \langle(1 + r)^{- \frac {1 + \delta} 2} (P_D - z)^{-1} (1 + r)^{- \frac {1 + \delta} 2} u, v \rangle\sqrt z,
\end{equation}
and for $\alpha>0$ put
\[
 \Omega_\alpha := \{z \in \mathbb C \mid \alpha \re z < |\im z|\}.
\]
Then $U$ is holomorphic in $\Omega_\alpha$, where it obeys
\[
  |U(z)| \le\frac{|\sqrt z| \|u\|\|v\| }{\textrm{dist}(z,[0,\infty))}   \le \frac {\sqrt{1 + \alpha^{-2}} \|u\|\|v\|}{|\sqrt{ z}|}
\]
Moreover, by \eqref{e:pdbig0}, for $z \in \partial \Omega_\alpha \setminus \{0\}$, we have
\begin{equation}\label{e:plalpha}
  | U(z)| \le \left(\sqrt{1 + \alpha} + \sqrt {2 + \alpha}\right)\left(\delta^{-1} + \delta_V^{-1}\right)\|u\|\|v\|.
\end{equation}
Then the Phragm\'en--Lindel\"of principle (see e.g. \cite[p. 236]{rs}) implies \eqref{e:plalpha} for all $z \in \Omega_\alpha$. Taking $\alpha \to 0$ gives \eqref{e:pdbig}.
\end{proof}

\begin{proof}[Proof of \eqref{e:pdsmall1}]
 We begin by following the proof of \eqref{e:pdbig}, but we drop the first term, rather than the second, from the left hand side of \eqref{e:compsq}, so that in place of \eqref{e:pdbig0} we have
\[
 \|\sqrt {w'} u'\| \le \left(1 + \sqrt{2 + \varepsilon E^{-1}}\right)\left\|\frac 1 {\sqrt {w'}}(P_D - z)u\right\|.
\]
We now integrate by parts to obtain a weighted version of the Poincar\'e inequality:
\[
 \left\|(1 + r)^{\frac{-3 - \delta}2} u\right\|^2  = \frac 2 {2 + \delta}\re \left\langle(1+r)^{-2 - \delta}u', u \right\rangle \le \left\|(1+r)^{\frac{-1 - \delta}2}   u'\right\|\left\| (1 + r)^{\frac{-3 - \delta}2}u \right\|,
\]
giving
\begin{equation}\label{e:pdsmall0}
  \left\|(1 + r)^{\frac{-3 - \delta}2} u\right\| \le \sqrt{\delta_V^{-1} + \delta^{-1}}\left(1 + \sqrt{2 + \varepsilon E^{-1}}\right)\left\|\frac 1 {\sqrt 
{w'}}(P_D - z)u\right\|.
\end{equation}
We now apply the Phragm\'en--Lindel\"of principle as in the proof of \eqref{e:pdbig}, with the difference that in place of \eqref{e:plu} we use
\[
U(z):= \langle(1 + r)^{- \frac {3 + \delta} 2} (P_D - z)^{-1} (1 + r)^{- \frac {1 + \delta} 2} u, v \rangle,
\]
to obtain \eqref{e:pdsmall1} when $\theta =1$. Then taking the adjoint gives the result for $\theta=0$, and interpolating (that is to say, applying the Phragm\'en--Lindel\"of principle with respect to $\theta \in \mathbb C$ such that $\re \theta \in [0,1]$) gives the result for $\theta \in (0,1)$.
\end{proof}

\begin{proof}[Proof of \eqref{e:pdvbound}] We again proceed as in the proof of \eqref{e:pdbig}, but this time we replace
\eqref{e:wdef} by
\[
 w(r): = 1 - \frac {\delta_V}{2(\delta_V + \delta)}(1+r)^{-\delta},
\]
so that \eqref{e:wvd} is replaced by
\[
  (wV_D)'(r) \le  -  \frac{\delta_V V_D(r)}{2(1+r)}.
\]
Now dropping the first two terms on the left hand side of \eqref{e:compsq}  gives
\[
 \left\langle\frac{\delta_V V_D(r)}{2(1+r)} u, u\right\rangle \le \frac {2E + \varepsilon } {E}\left\|\frac 1 {\sqrt {w'}}(P_D - z)u\right\|^2,
\]
or
\[
  \|V_D(r)^{\frac 12} (1+r)^{- \frac 12 }(P_D -z)^{-1} (1+r)^{- \frac {1 + \delta} 2 }\|\le \frac { 2\sqrt{2 + \varepsilon E^{-1}}} { \sqrt {\delta_V}}\sqrt{\delta^{-1} + \delta_V^{-1}}.
\]
We now proceed as in the proof of \eqref{e:pdsmall1}, applying the Phragmen--Lindel\"of principle to obtain \eqref{e:pdvbound} for $\theta =1$, and then taking the adjoint and interpolating to obtain \eqref{e:pdvbound} for $\theta \in [0,1)$.
\end{proof}

\section{Resolvent estimates for mildly trapping manifolds}\label{s:mild}

In \S\ref{s:assump} we state our main resolvent estimates for mildly trapping manifolds with asymptotically cylindrical ends, under suitable abstract assumptions. In the remainder of \S\ref{s:mild} we give examples which satisfy the assumptions, and then in \S\ref{s:proof} we prove the estimates.

\subsection{Resolvent estimates for asymptotically cylindrical manifolds}\label{s:assump}

Let $(X,g)$ be a smooth Riemannian manifold of dimension $d \ge 2$, with or without boundary, with the following kind of asymptotically cylindrical 
ends: we assume there is an open set $X_e \subset X$ such that $\partial X \cap X_e = \varnothing$, $X \setminus X_e$ is compact, and
\[
 X_e = (0,\infty)_r \times Y, \qquad g|_{X_e} = dr^2 + f(r)^{4/(d-1)}g_Y.
\]
Here $Y$ is a compact, not necessarily connected, manifold without boundary of dimension $d-1$, $g_Y$ is a fixed smooth metric on $Y$ and $f \in C^\infty([0,\infty);(0,1])$. We suppose further that there is $\delta_0>0$ such that
\begin{equation}\label{e:fdecay}|(f - 1)^{(k)}(r)| \le C_k (1+r)^{-k -\delta_0} \textrm{ for all } k \in \mathbb N_0 \textrm{ and } r \ge 0,\end{equation}
and
\begin{equation}\label{e:fdelta0}f'(r) \ge \delta_0 (1 + r)^{-1}(1 - f) \ge 0 \textrm{ for all } r \ge 0.\end{equation}
Suppose finally that $f(r) < 1$ for $r < 6$. Note that if we replace $r < 6$ by $r < r_0$ in this last condition, we can reduce to the case $r_0=6$ by multiplying $g$ by a constant and rescaling $r$ (i.e. using \eqref{e:rmapsto} with $R_1 = 0$ and $R_2 = r_0$).

We briefly discuss the assumptions \eqref{e:fdecay} and \eqref{e:fdelta0}. Note that the class of functions $f$ such that \eqref{e:fdecay} and \eqref{e:fdelta0} hold for a given $\delta_0>0$ is convex, and contains all functions of the form $f(r) = 1 - (1+r)^{-m}$ whenever $m \ge \delta_0$. Moreover, all functions $f$, such that $f'$ is compactly supported and positive on the interior of the support of $(1-f)$, obey \eqref{e:fdecay} and \eqref{e:fdelta0} for some $\delta_0>0$; indeed, letting $R_f:=\max\supp(1-f)$, we have
\[\lim_{r \uparrow R_f} \log(1 - f(r)) =\lim_{r \uparrow R_f} \frac d {dr} \log(1 - f(r)) =- \infty.\]

If $f'$ is compactly supported then the ends are cylindrical, rather than just asymptotically cylindrical.

 For notational convenience let us  extend $r$ to be a continuous function on $X$ 
with $-1/2\le r<0$ on $X\setminus\overline{X_e}$, and extend $f$ to be constant for $r \le0$.

Let $\Delta \le 0$ be the Laplacian on $X$. Let
\[
 P= P_h:= -h^2 \Delta + V,
\]
where $h \in (0,h_0]$ for some $h_0>0$, and:
\begin{itemize}
 \item $V= V_h \in C^\infty(X \times (0,h_0];\mathbb R)$  is bounded, together with all derivatives, uniformly in $h \in (0,h_0]$.
\item $V|_{X_e}$ is a function of $r$ and $h$ only, and has a decomposition $V|_{X_e} = V_L + h V_S$, where $V_L$ and $V_S$ may also depend on $h$, and $V_S = 0$ for $r \ge 5$ and $|V_S^{(k)}(r)| + |V_L^{(k)}(r)| \le C_k (1+r)^{- k -\delta_0}$ for all $k \ge 0$, uniformly in $h$.
\item $V_L'(r) \le - \delta_0 (1+r)^{-1}V_L(r)\le 0$ for all $r \ge 0$.
\end{itemize}

Note that the assumptions allow $V \equiv 0$ but not $f \equiv 1$. Such a restriction is necessary to obtain a resolvent bound which is uniform up to the spectrum, in light of the computation in \eqref{e:prodcomp}, which rules out such a bound in the case $(X, g) = (\mathbb R \times Y, dr^2 + dS)$ and $P = -h^2 \Delta$.

 Fix $E_0>0$.
We suppose that $E_0$ is a ``mildly trapping'' energy level for $P$ in the sense that adding a complex absorbing barrier 
supported on $X_e$ gives a polynomial resolvent bound. More specifically, 
suppose that for some $W_K \in C^\infty(\mathbb 
R;[0,1])$ with $W_K = 0$ near $(-\infty,5]$  and $W_K = 1$ near $[6,\infty)$, 
there is $N\in \mathbb R$ such that
 \begin{equation}\label{e:w}
  \|(P -i W_K(r) -E_0 )^{-1}\|_{L^2(X) \to L^2(X)} =: a(h)h^{-1} \le
h^{-N},
 \end{equation}
for all $h \in (0,h_0]$.

We have the following weighted resolvent bound up to the spectrum.

\begin{thm}\label{t:cont}
Let $(X,g), \ P, \ E_0,$ and $a(h)$ be as above. Fix $s_1, \ s_2 >1/2$ such that $s_1 + s_2>2$. There are $C>0$ and $h_1>0$ such that
\begin{equation}\label{e:2n-1}
\|(1 + r)^{-s_1}(P - E_0 - i \varepsilon)^{-1}(1+r)^{-s_2}\|_{L^2(X) \to L^2(X)} \le C(a(h)+ h^{-1})h^{-1},
\end{equation}
for all $\varepsilon \in \mathbb R \setminus 0$ and for all $h \in (0,h_1]$.
\end{thm}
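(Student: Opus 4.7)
The plan is to combine the a priori bound \eqref{e:w} with the half-line estimates from Section \ref{s:halfline} via a resolvent gluing. On the end $X_e$, the warped product structure makes multiplication by $f(r)$ a unitary isomorphism $L^2(X_e,g) \to L^2((0,\infty)\times Y, dr\,dy_0)$ (with $dy_0$ the volume form of $g_Y$), and under this conjugation $-h^2\Delta$ becomes $-h^2\partial_r^2 + h^2 f''/f + h^2 f^{-4/(d-1)}(-\Delta_Y)$. Expanding in an orthonormal eigenbasis $\{\phi_j\}$ of $-\Delta_Y$ with eigenvalues $\mu_j\ge 0$ decomposes $P|_{X_e}$ as an orthogonal direct sum of Dirichlet half-line operators
\[
P_{D,j} = -h^2\partial_r^2 + V_L(r) + h^2\mu_j f(r)^{-4/(d-1)} + h^2 f''(r)/f(r) + hV_S(r).
\]
The leading part $V_L + h^2\mu_j f^{-4/(d-1)}$ is nonnegative and, by \eqref{e:fdelta0} and the hypotheses on $V_L$, nonincreasing in $r$. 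Absorbing the smaller corrections $h^2 f''/f = O(h^2(1+r)^{-2-\delta_0})$ and $hV_S$ (compactly supported, of size $O(h)$) as perturbations of the positive commutator argument proving Theorem \ref{t:half}, that theorem applies uniformly in $j$ and $h$, yielding
\[
\|(1+r)^{-s_1}R_D(1+r)^{-s_2}\|_{L^2\to L^2}\le Ch^{-2},
\]
for the model Dirichlet resolvent $R_D := (P_e - E_0 - i\varepsilon)^{-1}$ on $L^2(\mathbb R_+\times Y)$.

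To glue, fix $r$-dependent cutoffs $\rho_\mathrm{in}, \rho_\mathrm{out}\in C^\infty(X;[0,1])$ with $\rho_\mathrm{in}+\rho_\mathrm{out}=1$, $\supp\rho_\mathrm{in}\subset\{r\le 3\}$ (so that $\rho_\mathrm{in}W_K\equiv 0$, since $W_K$ vanishes on $\{r\le 5\}$), and $\supp\rho_\mathrm{out}\subset\{r\ge 2\}\subset X_e$. Given $g\in L^2(X)$, set $\tilde g := (1+r)^{-s_2}g$ and $u := (P-E_0-i\varepsilon)^{-1}\tilde g$; the target is $\|(1+r)^{-s_1}u\|\le C(a(h)+h^{-1})h^{-1}\|g\|$. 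Write $u = \rho_\mathrm{in}u+\rho_\mathrm{out}u$. The identity $(P-iW_K-E_0)(\rho_\mathrm{in}u) = \rho_\mathrm{in}\tilde g + [P,\rho_\mathrm{in}]u + i\varepsilon\rho_\mathrm{in}u$ combined with \eqref{e:w} gives
\[
\|\rho_\mathrm{in}u\|\le (a(h)/h)\bigl(\|g\|+\|[P,\rho_\mathrm{in}]u\|+\varepsilon\|\rho_\mathrm{in}u\|\bigr),
\]
while the identity $(P_e-E_0-i\varepsilon)(\rho_\mathrm{out}u) = \rho_\mathrm{out}\tilde g + [P,\rho_\mathrm{out}]u$ (valid because $P = P_e$ on $\supp\rho_\mathrm{out}$) combined with the weighted half-line bound gives
\[
\|(1+r)^{-s_1}\rho_\mathrm{out}u\|\le Ch^{-2}\bigl(\|g\|+\|(1+r)^{s_2}[P,\rho_\mathrm{out}]u\|\bigr).
\]
Each commutator $[P,\rho_*]$ is a first-order semiclassical operator of size $O(h)$ supported in the compact transition region $\{2\le r\le 3\}$, where the weights $(1+r)^{\pm s_j}$ are bounded. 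Semiclassical elliptic regularity on a compact $K\subset X_e$ slightly larger than this transition region controls $\|[P,\rho_*]u\|_{L^2}$ by $Ch(\|\tilde g\|+\|u\|_{L^2(K)})$, and $\|u\|_{L^2(K)}$ is controlled by $\|\tilde\rho_\mathrm{in}u\|$ for a slightly enlarged cutoff $\tilde\rho_\mathrm{in}$. The $\varepsilon$ term is handled by splitting: for $\varepsilon\ge c h/a(h)$ the trivial bound $\varepsilon^{-1}\le Ca(h)/h$ already gives \eqref{e:2n-1}; for smaller $\varepsilon$ the $\varepsilon\|\rho_\mathrm{in}u\|$ term is absorbed on the left. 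A finite-step bootstrap with progressively enlarged cutoffs then closes the coupled estimates and yields \eqref{e:2n-1}.

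\emph{Anticipated obstacle.} The most delicate point is verifying the hypotheses of Theorem \ref{t:half} uniformly in the mode $j$: for large $\mu_j$, the dominant repulsive potential $h^2\mu_j f^{-4/(d-1)}$ decays only like $(1+r)^{-\delta_0}$ in $r$, and the repulsivity condition \eqref{e:vmhyp} need not hold with the weight choice \eqref{e:wdef}; a choice of $w$ adapted to $f$ (for instance incorporating a factor of $f^{4/(d-1)}$, so that $(w\mu_j f^{-4/(d-1)})'\le 0$) appears to be required, which in turn means re-running the Phragm\'en--Lindel\"of argument of Section \ref{s:halfline} with a geometry-dependent commutant. A secondary technical point is the bootstrap in the gluing step, which depends on the semiclassical elliptic bound on compact sets to control the commutator terms at each iteration.
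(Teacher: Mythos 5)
Your proposal has the right overall shape---decompose $P$ on the end $X_e$ into half-line Dirichlet operators, apply the weighted estimates from Theorem~\ref{t:half}, and glue with the black-box bound \eqref{e:w}---but the gluing argument as you describe it has a genuine gap, and the anticipated obstacle you flag is not actually the issue.

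\textbf{The gluing does not close.} Writing the coupled estimates in your notation, after absorbing the $\varepsilon$ term you get
\[
\|\rho_{\mathrm{in}}u\|\ \lesssim\ \frac{a(h)}{h}\|g\|\ +\ \frac{a(h)}{h}\,\|[P,\rho_{\mathrm{in}}]u\|,
\qquad
\|(1+r)^{-s_1}\rho_{\mathrm{out}}u\|\ \lesssim\ \frac{1}{h^2}\|g\|\ +\ \frac{1}{h^2}\,\|[P,\rho_{\mathrm{out}}]u\|.
\]
Even with the elliptic (really: integration-by-parts) estimate to trade $\|[P,\rho_*]u\|$ for $Ch(\|g\|+\|u\|_{L^2(K)})$, the coefficients in front of $\|u\|_{L^2(K)}$ are of size $a(h)$ and $h^{-1}$, which are large. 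A ``finite-step bootstrap with enlarged cutoffs'' will simply reproduce these factors at each step; there is no smallness to exploit and nothing gets absorbed. This is exactly why the paper does \emph{not} try to absorb these commutator remainders by elliptic regularity. Instead, the paper's proof (\S\ref{s:gluing}) uses the parametrix $G$ of \eqref{e:g} with two remainders $A_K$ and $A_e$, observes $A_K^2=A_e^2=0$ by support considerations, and then shows that the \emph{cross term} satisfies $\|A_e A_K\|=O(h^\infty)$; only after this is the Neumann series inverted. The $O(h^\infty)$ bound encodes that in the gluing region $r\in(2,5)$, nothing ``backscatters'': it rests on (i) the semiclassically outgoing property of $R_K$ from \cite{dv}, (ii) the Agmon estimate \eqref{e:agmondisjoint} for modes with $E_j\le E_*$ (for which $r<5$ is classically forbidden), and (iii) the propagation-of-singularities estimate \eqref{e:propsing2} for modes with $E_j>E_*$. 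None of these ingredients appear in your proposal, and without them the argument cannot conclude.

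\textbf{The mode-splitting is essential, not optional.} A closely related omission: the model resolvent $R_e$ in the paper is built from \emph{two different} one-dimensional operators depending on $j$, precisely because the behavior in the gluing region is qualitatively different when $E_j\le E_*$ (barrier/tunneling regime, handled by Agmon estimates) versus $E_j>E_*$ (propagating regime, handled by a microlocal positive commutator argument with a complex absorbing barrier $W_e$ over the whole real line). Your proposal treats all modes uniformly as Dirichlet half-line problems, which is what gives the correct $Ch^{-2}$ bound for the full $R_e$ (cf.\ \eqref{e:rcrkbd}), but does not give the mode-dependent ``no-backscatter'' statements needed for $\|A_e A_K\|=O(h^\infty)$.

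\textbf{On your anticipated obstacle.} The worry that \eqref{e:vmhyp} might fail for the $h^2\sigma_j^2 f^{-4/(d-1)}$ term is unfounded: hypothesis \eqref{e:fdelta0} together with \eqref{e:fab} gives
\[
-(f^{-\alpha}-1)' = \alpha f' f^{-\alpha-1} \ge \alpha\delta_0\,\frac{f^{-\alpha-1}-f^{-\alpha}}{1+r} \ge \frac{f^{-\alpha}-1}{C(1+r)},
\]
so this term is repulsive with $\delta_V$ depending only on $\delta_0$, uniformly in $j$. The genuine difficulty in the one-dimensional analysis is the $h^2 f''(r)f(r)^{-1}$ term, which need \emph{not} satisfy \eqref{e:vmhyp}; the paper removes it by a resolvent identity (cf.\ \eqref{e:resid}) and controls it using the interpolation inequality \eqref{e:mvt}, which shows $|f''|\lesssim(1-f)^{1/4}(1+r)^{-2-2\delta_0}$. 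Your phrase ``absorbing $h^2 f''/f$ as a perturbation of the positive commutator argument'' glosses over this, and the $hV_S$ term, which the paper absorbs into the repulsive part only because of \eqref{e:estarimp} (i.e., only for modes with $E_j\le E_*$), needs a similar justification.

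In short: fix the mode-splitting, prove the $O(h^\infty)$ remainder estimate for $A_eA_K$ via the outgoing/Agmon/propagation machinery, and replace the commutator bootstrap by the Neumann-series inversion of $I-A_eA_K+A_eA_KA_e$.
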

Note that the condition on $s_1$ and $s_2$ is the same as the one in \S\ref{s:introhalf} above, see in particular \eqref{e:pdsmallh} and \eqref{e:pdsmall1}. This is the resolvent weighting needed to have a low energy bound for scattering on the half line (and for more general Euclidean scattering problems).

To deduce Theorem \ref{t:resestx} from Theorem \ref{t:cont}, in Examples  \ref{ex:non1} and \ref{ex:3fun1} we let $X_e$ be the part of $X$ where $r \ge r_1$, for any $r_1>0$ such that $F'(r_1)>0$, and put $V \equiv 0$. Then, after redefining $r$ as in the remark following \eqref{e:fdelta0}, we see that $g$ has the desired form in $X_e$, and it remains to check that \eqref{e:w} holds with $N\le2$. Below in \S\ref{s:nonex} and \S\ref{s:hypex} we will show this for some examples which generalize Examples \ref{ex:non1} and  \ref{ex:3fun1} above.

We also have an improved bound when we cut off away from the trapping in the end. To state it, let $\chi_\Pi \in C^\infty(\mathbb R;[0,1])$ be $0$ near $(-\infty,0]$ and $1$ near $[1,\infty)$. Let $\Delta_Y\le0$ be the Laplacian on $(Y, g_Y)$, and let $\{\phi_j\}_{j=0}^\infty$ be a complete real-valued orthonormal set of its eigenfunctions, with $-\Delta_Y \phi_j = \sigma_j^2 \phi_j$, where $0 = \sigma_0 \le \sigma_1 \le \cdots$. For any $\mathcal J \subset \{0, \ 1, \ \dots\}$, we denote the orthogonal projection onto modes corresponding to $\mathcal J$ by $\Pi_{\mathcal J}\colon L^2(X_e) \to L^2(X_e)$, so that
\[
 (\Pi_{\mathcal J} u)(r,y) := \sum_{j \in \mathcal J} \phi_j(y)\int_Y u(r,y')  \phi_j (y') d\!\vol(y'),
\]
where $y$ and $y'$ denote points in $Y$. Then $\|\Pi_{\mathcal J} \chi_\Pi(r)\|_{L^2(X) \to L^2(X)} =1$, unless $\mathcal J$ is empty.

\begin{thm}\label{t:away}
Fix $s>1/2$ and $c_{\mathcal J} >0$. Let $\mathcal J := \{j \mid E_j:=E_0-h^2\sigma_j^2 \not\in[-c_{\mathcal J}h,c_{\mathcal J}]\}$.
Define a microlocal cutoff $\chi_{\mathcal J} \colon L^2(X) \to L^2(X)$ by putting
\begin{equation}\label{e:chidef}
 \chi_{\mathcal J}u :=
\begin{cases}
 \left(\Pi_{\mathcal J} \chi_\Pi(r) + \sqrt{V_L(r) + f(r)^{-4/(d-1)} - 1}\right)u, \quad &u \in L^2(X_e), \\
u, \quad &u \in L^2(X \setminus X_e),
\end{cases}
\end{equation}
and then extending to general $u \in L^2(X)$ by linearity.
There are $C>0$ and $h_1>0$ such that
\begin{equation}\label{e:taway}
\|(1 + r)^{-s}\chi_{\mathcal J}(P - E_0 - i \varepsilon)^{-1}(1+r)^{-s}\|_{L^2(X) \to L^2(X)}   \le C(1+a(h))h^{-1},
\end{equation}
for all $\varepsilon \in \mathbb R \setminus 0$ and for all $h \in (0,h_1]$. 
\end{thm}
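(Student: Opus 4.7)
The plan is to reduce \eqref{e:taway} to half-line Schr\"odinger problems via mode decomposition on the cylindrical end, and then apply Theorem~\ref{t:semi} to each mode with a case split on $E_j$. Introduce a cutoff $\psi \in C^\infty(\mathbb R; [0,1])$ supported in $\{r \ge 4\}$ and equal to $1$ on $\{r \ge 5\}$. For the part $(1-\psi)u$, where $u = R(E_0+i\varepsilon)g$, the absorbing potential $W_K$ vanishes on $\supp(1-\psi)$, and the identity $(P - iW_K - E_0)(1-\psi)u = (1-\psi)g + [P,\psi]u + i\varepsilon(1-\psi)u$ together with \eqref{e:w} produces the $a(h)h^{-1}$ contribution. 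For $\psi u$, extend by zero to $r \le 0$ and perform the substitution $u = f^{-1}\tilde u$, which eliminates the first-order term arising from the warped product volume form. Fourier expansion in the eigenbasis $\{\phi_j\}$ of $-\Delta_Y$ then reduces the analysis to a family of Dirichlet half-line problems
\[
(-h^2\partial_r^2 + W_j + hV_S + h^2 f^{-1}f'' - E_j - i\varepsilon)\tilde u_j = \tilde g_j,
\]
where $W_j := h^2\sigma_j^2(f^{-4/(d-1)} - 1) + V_L$ and $E_j = E_0 - h^2\sigma_j^2$. A preliminary step is to verify that $W_j$ satisfies the repulsivity hypothesis \eqref{e:vmhyp} uniformly in $j$ and $h$, with some $\delta_V > 0$ depending only on $\delta_0$ and $d$; this follows from \eqref{e:fdelta0} and the bound $V_L' \le -\delta_0(1+r)^{-1}V_L$. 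The perturbations $hV_S + h^2 f^{-1}f''$ are compactly supported and of order $h$, hence absorbed as a small perturbation in a Neumann series.

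The heart of the proof is the per-mode case analysis, which leverages the structure of $\chi_{\mathcal J}$. If $j \in \mathcal J$ with $E_j \ge c_{\mathcal J}$, then $|E_j + i\varepsilon| \ge c_{\mathcal J}$ and \eqref{e:pdbigh} gives $\|(1+r)^{-s}R_j(1+r)^{-s}\| \le C/(h\sqrt{c_{\mathcal J}})$, which handles the $\Pi_{\mathcal J}\chi_\Pi$ component. If $j \in \mathcal J$ with $E_j \le -c_{\mathcal J}h$, positivity of $-h^2\partial_r^2 + W_j$ on $L^2(\mathbb R_+)$ yields $\|R_j\| \le |E_j - i\varepsilon|^{-1} \le (c_{\mathcal J}h)^{-1}$ directly. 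For the $\sqrt{V_L + f^{-4/(d-1)} - 1}$ component of $\chi_{\mathcal J}$, which contributes for every $j$, I would apply \eqref{e:pdvboundh} with $V_D = W_j$ together with the pointwise inequality
\[
\sqrt{V_L + f^{-4/(d-1)}-1} \le \frac{\sqrt{W_j}}{\min(1,\, h\sigma_j)},
\]
and the observation that $(1+r)^{-s} \le (1+r)^{-1/2}$ for $s > 1/2$, yielding a $C/h$ bound whenever $h\sigma_j$ is bounded below.

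Summing the mode contributions via Parseval and combining with the interior estimate gives \eqref{e:taway}. The main obstacle is the finitely many low modes (those with $\sigma_j$ bounded independently of $h$) for which $j \notin \mathcal J$; this situation occurs only when $c_{\mathcal J} \ge E_0$, in which case the inequality relating $\sqrt{V_L + f^{-4/(d-1)}-1}$ to $\sqrt{W_j}$ degenerates. For these modes, however, $E_j \to E_0 > 0$ as $h \to 0$, so $|E_j + i\varepsilon|$ is bounded below by a positive constant for $h$ small, and \eqref{e:pdbigh} directly gives $\|(1+r)^{-s}R_j(1+r)^{-s}\| \le C/h$; since $\sqrt{V_L + f^{-4/(d-1)} - 1}$ is uniformly bounded, the $\chi_{\mathcal J}$-weighted bound for these exceptional modes follows. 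A secondary technical point is the self-consistent reabsorption of the commutator $[P,\psi]u$ into the end-part estimate, which closes by the mode bounds just described, provided $h$ is sufficiently small.
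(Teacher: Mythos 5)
Your overall strategy---decompose via a cutoff $\psi$, reduce the end to Dirichlet half-line problems via $u = f^{-1}\tilde u$ and mode expansion, and apply Theorem~\ref{t:semi} with a case split on $E_j$---is close in spirit to the paper's, which likewise glues the complex-absorbing resolvent $(P-iW_K(r)-E_0-i\varepsilon)^{-1}$ to a model resolvent built mode-by-mode on the end. But the point you dismiss as a ``secondary technical point,'' the reabsorption of the commutator $[P,\psi]u$, is exactly where the theorem is hard, and your plan for it does not close. The commutator $[P,\psi]$ is a first-order operator of size $O(h)$ supported in $(4,5)$, so $\|[P,\psi]u\| \lesssim h\|\tilde\chi u\|_{H^1_h}$ for $\tilde\chi$ localized there. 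The mode bounds you invoke (\eqref{e:pdbigh}, positivity of $-h^2\partial_r^2 + W_j$, \eqref{e:pdvboundh}) give at best $\|\tilde\chi u\|_{H^1_h} \lesssim h^{-1}(\|v\| + \|[P,\psi]u\|)$, hence $\|[P,\psi]u\| \lesssim \|v\| + \|[P,\psi]u\|$ with a constant that is in no way small, so the loop does not close for small $h$. The only unconditional a priori bound on $u$ near $(4,5)$ is Theorem~\ref{t:cont}, which when fed back produces a lossy $h^{-2}$ on the right of \eqref{e:taway}. What is missing are the three propagation-type inputs the paper uses precisely so that the analogous gluing remainders are negligible: the Agmon non-overlap estimate \eqref{e:agmondisjoint} for the tunneling modes $E_j\le E_*$, the propagation-of-singularities estimate \eqref{e:propsing2} for the propagating modes $E_j>E_*$, and the semiclassically outgoing property \eqref{e:rkog} of the complex-absorbing resolvent imported from \cite{dv}. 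The paper organizes these into the cross-term bound $\|A_eA_K\|=O(h^\infty)$ in \eqref{e:acak}, for a parametrix whose shifted cutoffs also give the nilpotency $A_K^2=A_e^2=0$; without some ingredient of this nature the self-consistency argument fails, and the issue is structural, not merely technical.

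A smaller but real issue is your treatment of $h^2 f''f^{-1}$ as a compactly supported $O(h)$ perturbation to be absorbed in a Neumann series: $f''$ decays only like $(1+r)^{-2-\delta_0}$, not compactly, and in the tunneling regime the resolvent is of size $h^{-2}$, so the naive product is $O(1)$, not small. The paper's Proposition~\ref{p:resestjbig} uses the one-sided $V_D^{1/2}$-weighted bound \eqref{e:pdvbound} together with the pointwise inequality \eqref{e:f''f}, derived via \eqref{e:mvt}, to make the Neumann series converge. Some such argument must be supplied here as well. The remaining pieces of your per-mode case analysis---including the inequality $\sqrt{V_L+f^{-4/(d-1)}-1}\le \sqrt{W_j}/\min(1,h\sigma_j)$ and the treatment of the finitely many low modes with $h\sigma_j$ small---are correct in outline.
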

By taking the adjoint, we see that \eqref{e:taway} implies
\begin{equation}\label{e:taway2}
\|(1 + r)^{-s}(P - E_0 - i \varepsilon)^{-1}\chi_{\mathcal J}(1+r)^{-s}\|_{L^2(X) \to L^2(X)}   \le C(1+a(h))h^{-1}.
\end{equation}

Note that the statement is strongest when $c_{\mathcal J}$ is chosen very small, much smaller than $E_0$. We think of $\chi_{\mathcal J}$ as cutting off away from (or, almost, projecting away from)
\[
T_\mathcal J = \{u \in L^2(X_e) \mid fu=u, \ V_L u = 0, \ \Pi_\mathcal Ju = 0 \} \subset L^2(X).
\]
Observe that the condition $E_j \in [-c_{\mathcal J}h,c_{\mathcal J}]$ corresponds, when $V_L=0$ and $f=1$, to the condition that $\rho^2 \in [-c_{\mathcal J}h,c_{\mathcal J}]$, where $\rho$ is the dual variable to $r$. In this sense $T_\mathcal J$ corresponds to a neighborhood of the bicharacteristics in $T^*X_e$ along which $r$ is constant, that is to say bicharacteristics trapped in the cylindrical ends. In this sense $\chi_{\mathcal J}$  cuts off away from the trapping in the cylindrical ends. The asymmetry in the interval $[-c_\mathcal J h, c_\mathcal J]$ is due to the fact that  our estimates are much easier when $E_j\le - Ch$ for any $C>0$ (see in particular the sentence following  \eqref{e:rechij} below); we do not expect this form of the interval to be optimal.

To simplify matters, in our discussion of the interpretation and context of this result we focus on the special case of the following Corollary, although most of the statements could be adapted to apply to the more general case.

\begin{cor}
Let $(X,g) = (\mathbb R^d, g)$ be as in Example \ref{ex:non1}. In the notation of that example, fix $\chi \in C_c^\infty(X)$ with $\supp \chi \subset \{(r,\theta) \in \mathbb R^d \mid r< R\}$, and fix $s>1/2$. Then there are $z_0>0$ and $C>0$ such that
\begin{equation}\label{e:cor}
 \|(1+r)^{-s} (-\Delta - z)^{-1} \chi \|_{L^2(X) \to L^2(X)} +  \|\chi (-\Delta - z)^{-1} (1+r)^{-s}\|_{L^2(X) \to L^2(X)} \le C/\sqrt{\re z},
\end{equation}
for all $z \in \mathbb C$ with $\re z \ge z_0$ and $\im z \ne 0$.
\end{cor}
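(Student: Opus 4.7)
The plan is to deduce the corollary from Theorem~\ref{t:away} via a semiclassical rescaling together with an operator-inequality comparison between $\chi(r)$ and the abstract cutoff $\chi_{\mathcal J}$. Setting $h=\sqrt{E_0/\re z}$ and $\varepsilon=h^2\im z$, we have $(-\Delta-z)^{-1}=h^2(P-E_0-i\varepsilon)^{-1}$ with $P=-h^2\Delta$, and the target bound $C/\sqrt{\re z}$ translates to $\|\chi(r)(P-E_0-i\varepsilon)^{-1}(1+r)^{-s}\|\le C'/h$. Choosing $z_0$ so that $h\le h_1$ (the threshold in Theorem~\ref{t:away}), and granting that Example~\ref{ex:non1} is mildly trapping, i.e.\ $a(h)=O(1)$ (as will be verified in \S\ref{s:nonex}), Theorem~\ref{t:away} gives
\[
\|(1+r)^{-s}\chi_{\mathcal J}(P-E_0-i\varepsilon)^{-1}(1+r)^{-s}\|\le C_1/h.
\]

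The heart of the argument is the operator inequality
\[
\chi(r)^2 \le C_0\,(1+r)^{-s}\chi_{\mathcal J}^2(1+r)^{-s}.
\]
Since $\chi_{\mathcal J}$ is composed of functions of $r$ and of the operator $\Pi_{\mathcal J}$ acting in $y$, it commutes with multiplication by $(1+r)^{-s}$, so the right side equals $C_0\chi_{\mathcal J}^2(1+r)^{-2s}$. On $X_e$, writing $M(r):=\sqrt{f(r)^{-4/(d-1)}-1}$, the summands $M$ and $\Pi_{\mathcal J}\chi_\Pi$ of $\chi_{\mathcal J}$ are commuting non-negative self-adjoint operators, so expanding the square gives $\chi_{\mathcal J}^2\ge M^2$. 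Since $\supp\chi$ is closed and contained in $[0,R)$, it lies in some $[0,R-\delta]$, which after rescaling sits in a compact subinterval of $\{f<1\}$; hence $M(r)^2(1+r)^{-2s}$ is bounded below by a positive constant on $\supp\chi\cap X_e$. On $X\setminus X_e$, where $r\in[-1/2,0)$, we have $\chi_{\mathcal J}=\Id$ and $(1+r)^{-2s}\ge 1$, so the analogous lower bound is trivial. Combining these in a block decomposition yields the claimed inequality for $C_0$ large enough.

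From the operator inequality, $\|\chi(r)\psi\|^2\le C_0\|(1+r)^{-s}\chi_{\mathcal J}\psi\|^2$ for every $\psi\in L^2(X)$. Applying this with $\psi=(P-E_0-i\varepsilon)^{-1}(1+r)^{-s}u$ and using Theorem~\ref{t:away} yields $\|\chi(r)(P-E_0-i\varepsilon)^{-1}(1+r)^{-s}\|\le\sqrt{C_0}\,C_1/h$; undoing the semiclassical rescaling produces the bound on the first term in the corollary. The bound on the second term follows by taking adjoints, using $(-\Delta-z)^*=-\Delta-\bar z$ and the self-adjointness of $\chi(r)$ and $(1+r)^{-s}$. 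The main obstacle is the operator inequality itself, whose verification relies on the commutativity structure of $\chi_{\mathcal J}$ and on the fact that $\supp\chi$ lies strictly inside the region where the multiplicative part $M(r)$ of $\chi_{\mathcal J}$ is bounded below by a positive constant.
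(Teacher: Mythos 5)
Your proof is correct and uses what is clearly the intended route: a semiclassical rescaling $h=\sqrt{E_0/\re z}$ to put the statement in the form of Theorem \ref{t:away}, the nontrapping bound \eqref{e:wnon} from \S\ref{s:nonex} to get $a(h)=O(1)$, and a comparison between the spatial cutoff $\chi(r)$ and the operator $\chi_{\mathcal J}$. The operator inequality $|\chi(r)|^2 \le C_0\,(1+r)^{-s}\chi_{\mathcal J}^2(1+r)^{-s}$ is exactly the right mechanism, and your verification of it is sound: $\Pi_{\mathcal J}\chi_\Pi(r)$ and the multiplicative part $M(r)=\sqrt{f^{-4/(d-1)}-1}$ are commuting nonnegative self-adjoint operators, so $\chi_{\mathcal J}^2\ge M^2$ by expanding the square, and since $\supp\chi$ is a closed subset of $[0,R)$ it lies in some $[0,R-\delta]$, on which (after the affine change of variable \eqref{e:rmapsto}, which makes $(1+r)^{-s}$ an equivalent weight) $M^2(1+r)^{-2s}$ is bounded below by a positive constant. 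The block argument on $X\setminus X_e$, where $\chi_{\mathcal J}=\Id$ and $r\in[-1/2,0)$, is also right.

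Two tiny points worth flagging but not gaps: (i) $\chi\in L^\infty$ need not be real-valued, so you should write $|\chi|^2$ (equivalently $\chi^*\chi$) rather than $\chi^2$ in the operator inequality; the estimate $\|\chi\psi\|^2=\langle|\chi|^2\psi,\psi\rangle$ then proceeds exactly as you say. (ii) You label your directly-derived estimate as the first term of \eqref{e:cor}, but $\|\chi(r)(-\Delta-z)^{-1}(1+r)^{-s}\|$ is the second term; the adjoint (with $\bar z$ in place of $z$, harmless since the bound depends only on $\re z$) gives the first. Neither of these affects the validity of the argument.
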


Note that this $\chi$ is local, in contrast to the microlocal $\chi_{\mathcal J}$ of Theorem 3.2. 
Recall that $R$ is the radius at which the cylindrical end begins; hence $\chi$ is a cut off away from the trapping on the cylindrical end, and in this example there is no other trapping.  The right hand side of \eqref{e:cor} is the usual nontrapping upper bound, cf. \eqref{e:pdbig} and the bound of $Ch^{-1}$ in \eqref{e:pdbigh}. There have been many results in asymptotically Euclidean, conic, and hyperbolic scattering proving that such nontrapping bounds hold when one cuts off away from trapping on \textit{both} sides of the resolvent: these go back to work of Cardoso and Vodev \cite{cv}, refining an earlier result of Burq \cite{burq}. Intriguingly, in \eqref{e:cor} we get a nontrapping bound by applying a spatial cutoff  away from trapping on only \textit{one} side of the resolvent; to our knowledge no such result is known in asymptotically Euclidean, conic, and hyperbolic scattering, although a related weaker bound can be found in \cite{bz, chr, dv2} (and note that the weaker bound is shown to be optimal in a special example in \cite{dy}). A possible interpretation is the following: unlike in any of the examples studied in \cite{bz, dv2}, in Example \ref{ex:non1} the set $K$ of bicharacteristics  trapped as $t \to + \infty$ \textit{and}  $t \to -\infty$ is the same as the set $\Gamma_\pm$ of bicharacteristics   trapped as $t \to + \infty$ \textit{or} $t \to -\infty$, and one expects resolvent estimate losses due to mild trapping to be concentrated on $\Gamma_\pm$.

On the other hand, in \cite{ddz} it is shown that for a ``well in an island'' semiclassical Schr\"odinger operator (in which case incidentally  $K$ does equal $\Gamma_\pm$), losses due to trapping extend beyond $\Gamma_\pm$ and cutting off on one side only is not enough to give nontrapping bounds; as discussed in that paper, this is closely related to the fact that the trapping in this case is stable (so that tunneling can produce losses away from $\Gamma_\pm$), unlike in Example \ref{ex:non1} or in the examples in \cite{dv2}. It is then natural to ask: when is cutting off a resolvent away from trapping on one side sufficient to give nontrapping bounds, and when is it necessary to cut off on both sides?

\subsection{Examples with no trapping away from the ends}\label{s:nonex} Let $X$ have no boundary and let  $K_{E_0}$ be the set 
of bicharacteristics of $P$ at energy $E_0$ which do not intersect $T^*X_e$. If $K_{E_0} = \varnothing$, then it is essentially well-known that
\begin{equation}\label{e:wnon}
  \|(P -i W_K(r) -E_0 )^{-1}\|_{L^2(X) \to L^2(X)} \le C h^{-1};
\end{equation}
the proof of \eqref{e:wnon} follows from the proof of 
\cite[Theorem 6.11]{dz} or that of  \cite[Proposition 3.2]{d:cusp}. 
In the case that $|V| \le C h$, demanding that $K_{E_0} = \varnothing$ is equivalent to demanding that all maximally extended geodesics on $X$ intersect $X_e$; specific examples are given in Example \ref{ex:non1}.

\subsection{Hyperbolic and normally hyperbolic trapped sets.}\label{s:hypex} If $K_{E_0} \ne \varnothing$  we cannot hope to have \eqref{e:wnon}, but if $K_{E_0}$ is hyperbolic or normally hyperbolic then we may have
\begin{equation}\label{e:wlog}
  \|(P-i W_K(r) -E_0 )^{-1}\|_{L^2(X) \to L^2(X)} \le
C\log(h^{-1})h^{-1}.
\end{equation}
In the case of a closed hyperbolic orbit, such bounds are due to Burq \cite{burqs} and Christianson \cite{chr}. For  hyperbolic trapped sets satisfying a pressure condition they are due to Nonnenmacher and Zworski \cite{nz1}, and for normally hyperbolic trapped sets to Wunsch and Zworski \cite{wz} and to Nonnenmacher and Zworski \cite{nz2} (and see also \cite{dy}). Some recent surveys of the substantial wider literature concerning estimates like \eqref{e:wlog} can be found in \cite{nonn, z:sur}.

To deduce \eqref{e:wlog} from \cite{nz1} or  \cite{nz2}, note that the difference between \eqref{e:wlog} and \cite[(2.7)]{nz1} or \cite[(1.18)]{nz2} lies in the assumptions in the region where $W_K = 1$. But in this region $P-iW_K$ is semiclassically elliptic, so the discrepancy can be removed using a parametrix $G'$ analogous to the one in \eqref{e:g} below, and rather than having to go through a procedure like that in \S\ref{s:gluing} we just have $(P-i W_K(r) -E_0 )G' = I +  O(h^{\infty})$.

Rather than discussing the general dynamical assumptions further, we now specialize to more concrete examples.

Let $(X,g_H)$ be a conformally compact manifold of constant negative curvature.  We recall that this means that the metric $g_H$ is  asymptotically hyperbolic in the sense of \cite{mm} (see also \cite[\S 5.1]{dz}), so there is an open set $X_e'$ and $R \in \mathbb R$ such that $X \setminus X_e'$ is compact and
\[
X_e' = (R,\infty)_{r} \times Y, \qquad g_H|_{X_e'} = d{ r}^2 + e^{2 r}g_Y(e^{- r}),
\]
where $Y$ is a compact, not necessarily connected, manifold without boundary and $g_Y(x)$ is a family of metrics on $Y$ depending smoothly on $x$ up to $x = 0$. Such a `normal form' of the metric was first found in \cite{gl}, and it is also in \cite[\S5.1.1]{dz}.

We modify the metric to obtain a manifold with cylindrical ends in the following way. We first observe that, denoting points in $T^*X_e'$ by $(r,y, \rho, \eta)$, where $y \in Y$, $ \rho$ is dual to $r$, and $\eta$ is dual to $y$, along $g_H$-geodesics we have
\[
\frac{d^2}{dt^2} r =: \ddot  r =  -2 \partial_{r} (e^{-2r}|\eta|^2_{r,y}) = 4 e^{-2r}|\eta|^2_{r,y}(1+O(e^{-r})),
\]
where the length $|\eta|_{r,y}$ is taken with respect to the dual metric to $g_Y(e^{-r})$. 
Hence, after possibly redefining $R$ to be larger, we may suppose that $\ddot r \ge 2 e^{-2r}|\eta|^2_{r,y}$  for $r \ge R$, and in particular that no bounded $g_H$-geodesics intersect $\overline{X_e'}$. Indeed,  since $E_0:= \rho^2 + e^{-2r}|\eta|^2_{r,y}$ is conserved and $\dot r = 2 \rho$, in $X_e'$ we have
\[
 \ddot r \ge  2 e^{-2r}|\eta|^2_{r,y} =  2E_0 - \dot r^2/2,
\]
 which means $r$ is not bounded for all $t$.

 Fix $\chi_H \in C^\infty(\mathbb R; [0,1])$ such that $\chi_H(r) = 1$ near $(-\infty, R]$ and $\chi_H(r) = 0$ near $[R+1,\infty)$, and fix $ F\in C^\infty([R,\infty),(0,\infty))$ such that $F'$ is compactly supported, positive on the interior of its support, and such that $F'(r)>0$ for $r \le R+2$. Take $g$ such that $g|_{X \setminus X_e'} = g_H|_{X \setminus X_e'}$, and 
\[
g|_{X_e'} = \chi_H(r) g_H + C_g(1-\chi_H(r))\left(dr^2 +  F(r)g_Y(0)\right).
\]
We claim that if $C_g$ is large enough, then $\ddot r \ge 0$ along $g$-geodesics in $\overline{X_e}$. Indeed, 
\[
 \ddot r/2 =  - \chi_H(r) \partial_r (e^{-2r}|\eta|^2_{r,y}) + C_g(1-\chi_H(r))F'(r)|\eta|^2_0  - \chi'_H(r)(e^{-2r}|\eta|^2_{r,y} - C_g F(r)|\eta|^2_0 ),
\]
so it is enough to take $C_g$ large enough that on $T^*\supp \chi'_H(r)$ we have $e^{-2r}|\eta|^2_{r,y} \le C_g F(r)|\eta|^2_0$.

Now we may take $X_e$ to be the part of $X_e'$ in which $r > R+1$, and, after redefining $r$ by \eqref{e:rmapsto}, we see that it remains only to check \eqref{e:w}.

We take $W_K \in C^\infty(\mathbb R; [0,1])$ which is $1$ near $[R+2,\infty)$ and $0$ near $(-\infty,R+1]$, and suppose $|V| \le Ch$ and $E_0=1$. Let $K$ denote the set of trapped unit speed geodesics of $(X,g_H)$, regarded as a subset of $T^*X$. We see that $K$ is also the set of the bicharacteristics of $P$ at energy $E_0$ which do not intersect $T^*X_e$, and that $g_H =g$ near the projection of $K$ onto $X$.

Let $d_K$ be the Hausdorff dimension of $K$. If $d_K<d$, then the assumptions of \cite{nz1} are satisfied, and  \eqref{e:wlog} holds. 

If $d=2$ and $V \equiv 0$, then we can dispense with the requirement that $d_K<d$ thanks to a recent result of Bourgain and Dyatlov \cite[Theorem 2]{bd} (this is the case presented in Example \ref{ex:3fun1} above). To do this we use the fact (see \cite[Lemma 4.7]{burqs} or e.g. \cite[Proof of (6.3.10)]{dz}) that \cite[(1.1)]{bd} implies
\[
 \|\chi(-h^2\Delta_0 - E_0 -i0)^{-1}\chi\|_{L^2(X) \to L^2(X)} \le C \log(h^{-1})h^{-1},
\]
for any $\chi \in C_c^\infty(X)$. Then the gluing result of \cite[Theorem 2.1]{dv} together with the semiclassically outgoing property of $(-h^2\Delta_0 - E_0 -i0)^{-1}$ (established by Vasy in \cite{vasy}  and see also \cite[Theorem 5.34]{dz}) implies \eqref{e:wlog}. In the interest of brevity we do 
not discuss this further here.

\subsection{Warped products with embedded eigenvalues}\label{s:warp}

Let $X := \mathbb R \times Y$ and $g:=dr^2 + f(r)^{4/(d-1)}g_Y$ for some $f \in C^\infty(\mathbb R;(0,1])$ which is 1 on $\mathbb R \setminus (-R,R)$ for some $R>0$ and has a nondegenerate minimum as its only critical point in $(-R,R)$: see Figure \ref{f:hourglass}.

\begin{figure}[h]
\vspace{.3 cm}
\includegraphics[width=10cm]{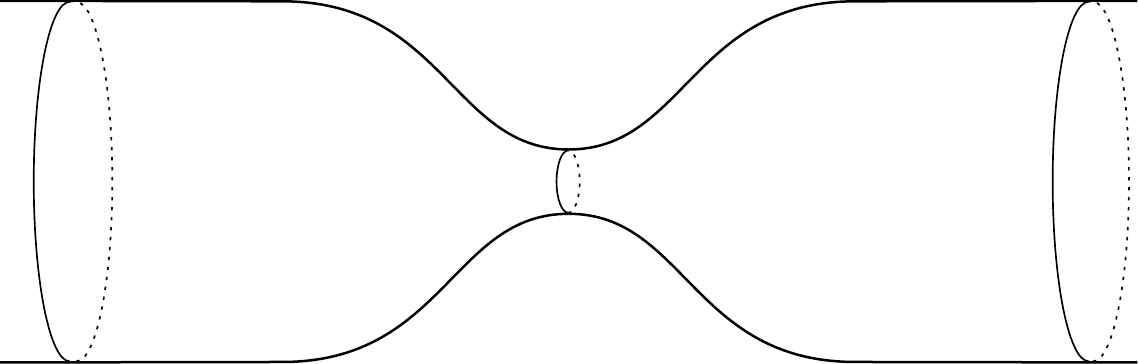}
 \caption{An hourglass shaped surface of revolution.}\label{f:hourglass}
\end{figure}

Suppose $V = h^2 V_W$, with $V_W = V_W(r)\in C_c^\infty((-R,R))$. Then the part of the trapped set away from the cylindrical ends  is normally hyperbolic and we have \eqref{e:wlog} (see \cite[(6.3.10)]{dz}, and see also \cite{cw, c} for the case of a degenerate minumum where incidentally  we also have \eqref{e:w}). Consequently, by Theorem \ref{t:cont}, there is $z_0>0$ such that for all  $s_1, \ s_2 >1/2$ such that $s_1 + s_2>2$, there is $C>0$  such that
\[
\|(1 + |r|)^{-s_1}(- \Delta + V_W  - z)^{-1}(1+|r|)^{-s_2}\|_{L^2(X) \to L^2(X)} \le C,
\]
for all $z \in \mathbb C$ with $\re z \ge z_0$ and $\im z \ne 0$. In particular the spectrum of $-\Delta + V_W$ is absolutely continuous on $(z_0,\infty)$.

But if $f$ and $V_W$ are suitably chosen, then $ \Delta + V_W$ has an eigenvalue embedded in the spectrum in $[0,z_0]$. Indeed, we have
\[
 \Delta = f(r)^{-1} \left(\sum_{j=0}^\infty\left(\partial_r^2 - 
f''(r)f(r)^{-1} - \sigma^2_j f(r)^{-4/(d-1)}\right) \phi \otimes \phi \right) f(r),
\]
where  $\{ \phi_j\}_{j=0}^\infty$ is a complete set of real-valued orthonormal eigenfunctions of the Laplacian on $Y$ and $-\Delta_Y \phi_j=\sigma_j^2 \phi_j$.
 For $J \in \mathbb N_0$,  consider the effective potential 
\[V_J(r): = f''(r)f(r)^{-1} + \sigma^2_J (f(r)^{-4/(d-1)} - 1) + V_W(r).\]
Then $D_r^2 + V_J$ has an eigenvalue as long as $\int V_J(r) dr \le 0$  by \cite[Theorem XIII.110]{rs}, and this corresponds to an embedded eigenvalue for $- \Delta + V_W$ as long as it is positive, for which it suffices to have $\min V_J(r) > - \sigma^2_J$. For example, we may take $f$ such that   $\int(f(r)^{-4/(d-1)} - 1) \le 1/4$ and $V_W \in C_c^\infty((-R,R);[- \sigma_J^2/2,0])$ such that  $V_W(r) = - \sigma_J^2/2$ on $[-R/2,R/2]$, and then $J$ sufficiently large.

By elaborating the above constuction one can also find examples with any finite number of embedded eigenvalues.

It is not clear whether there are examples of manifolds with cylindrical ends such that $-\Delta$ has a finite but nonzero number of eigenvalues. For all known examples where eigenvalues occur, the existence of infinitely many eigenvalues is either also established \cite{cz, parn} or at the least it is not ruled out \cite{kk}. On the other hand $0$ is always a resonance of $-\Delta$ on a manifold with cylindrical ends, with the constant functions as resonant states, unless there is a boundary condition somewhere that eliminates them.

\section{Proof of Theorems \ref{t:cont} and \ref{t:away}}\label{s:proof}

\subsection{Outline of proof}\label{s:outline}

The idea of the proofs is to define a parametrix for $P - z$ by
\begin{equation}\label{e:g}
G:= \chi_K(r-1)(P-iW_K(r)-z)^{-1}\chi_K(r) + \chi_e(r+1)(P_e - z)^{-1} \chi_e(r),
\end{equation}
where  $\chi_e, \chi_K \in C^\infty(\mathbb R)$ obey $\chi_e + \chi_K = 1$, 
$\supp \chi_e \subset (3,\infty)$, and $\supp \chi_K \subset (-\infty,4)$, and $P_e$ is a suitably chosen differential operator such that $P_e = P$ on the part of $X$ where $r>2$. Then
\[
 (P-z)G = I +  [h^2 D_r^2, \chi_K(r-1)](P-iW_K(r)-z)^{-1}\chi_K(r) +  [h^2 D_r^2, \chi_e(r+1)](P_e - z)^{-1} \chi_e(r),
\]
and we will construct an inverse for $(P-z)$ by removing this remainder using a Neumann series; although the remainder above need not be small, we will see that powers of it are. We call the part of $X$ where $r \in (2,5)$ the \textit{resolvent gluing region}, because the functions in the range of the remainder are supported in that region. To prove that powers of the remainder are small, we will need to know that:
\begin{enumerate}
 \item The resolvents  of $P-iW_K(r)$ and $P_e $ obey estimates analogous to \eqref{e:2n-1} and \eqref{e:taway}. This is the case for $P-iW_K(r)$ thanks to the assumption \eqref{e:w}, and we will prove it for a suitable choice of $P_e$ in \S\ref{s:jbig} and \S\ref{s:jsmall}.
\item The resolvents of $P-iW_K(r)$ and $P_e $ obey improved estimates when multiplied by cutoffs with suitable support properties in the resolvent gluing region, corresponding to a (special case of a) \textit{semiclassically outgoing condition} so that we are able to remove the remainders. The needed estimates are proved in \cite{dv} for  $P-iW_K(r)$  and in \S\ref{s:jbig} and \S\ref{s:jsmall} for $P_e$.
\end{enumerate}

We combine these estimates to prove  Theorems \ref{t:cont} and \ref{t:away} in \S\ref{s:gluing}. There we follow a procedure analogous to that in \cite{dv}, but with some finer analysis of remainders to remove the losses due to trapping in the cylindrical end (see also \cite[\S 3]{d:cusp} for another, in some ways related, variation on this resolvent gluing procedure).

\subsection{Model operators for $X_e$}\label{s:xc}

On $X_e$, $\Delta$ can be written as a direct sum of one-dimensional 
Schr\"odinger operators:
\[
 \Delta|_{X_e} = f(r)^{-1} \left(\sum_{j=0}^\infty\left(\partial_r^2 - 
f''(r)f(r)^{-1} - \sigma^2_j f(r)^{-4/(d-1)}\right) \phi \otimes \phi \right) f(r),
\]
where  $\{ \phi_j\}_{j=0}^\infty$ is a complete set of real-valued orthonormal eigenfunctions of the Laplacian on $Y$ and $-\Delta_Y \phi_j=\sigma_j^2 \phi_j$. We will introduce model operators $P_j$ obeying
\begin{equation}\label{e:pccagree}
 P_j|_{[2,\infty)}=  -h^2 \partial_r^2 + V_j(r), \qquad V_j(r) := V(r) +  h^2 f''(r)f(r)^{-1} + h^2 \sigma_j^2 (f(r)^{-4/(d-1)} - 
1),
\end{equation}
 and we will be studying them near the energy levels 
\[
E_j:=E_0 - h^2 \sigma_j^2.
\]

We will study 
two ranges of $j$ separately, and the model operators $P_j$ will act on 
different spaces depending on $j$. These two ranges correspond to different behavior in the resolvent gluing region, which is the part of $X$ where $r \in (2,5)$ (see \S\ref{s:outline}). To define the ranges, fix $E_*\in \mathbb R$, independent of $h$, such that
\[
 0<E_* \le c_{\mathcal J},
\]
where $c_{\mathcal J}$ is as in the statement of Theorem \ref{t:away}, 
and
\begin{equation}\label{e:estarimp}
 E_j \le E_* \Longrightarrow h^2\sigma_j^2f(5)^{-4/(d-1)} \ge E_0;
\end{equation}
note that the conditions are compatible because $E_j = 0$ when $E_0 = h^2 \sigma_j^2$ and $f(5)<1$.

The first range we consider is $E_j \le E_*$; in this range the set where $r<5$ is classically forbidden because $V_j>E_j$, and we control remainders in the gluing region using Agmon estimates, taking care to prove that our estimates are uniform as $j \to \infty$ (although the effective potentials $V_j$ become unbounded as $j \to \infty$, they are nonnegative, so the relevant estimates actually get better in this limit). The second range is $E_j \ge E_*$; in this range the set where $r<5$ is not classically forbidden, but the energy levels $E_j$ are bounded below by a positive constant and the effective potentials $V_j$ are repulsive, so nontrapping propagation of singularities estimates hold, which we can use to control the remainders in the gluing region (once again we take care to prove that the estimates are uniform in $j$). 

For the first range of $j$ we define the operators $P_j$ to act on $L^2(\mathbb R_+)$, with a Dirichlet boundary condition at $0$, in order to be able to use Theorem \ref{t:half} (the Dirichlet boundary condition makes it easier to analyze the behavior of the resolvent when $|E_j|$ is small). For the second range of $j$ it is more convenient to work over $\mathbb R$ than $\mathbb R_+$, in order to avoid reflection phenomena when studying propagation of singularities.

\subsection{Analysis when $E_j \le E_*$}\label{s:jbig}
In \S\ref{s:jbig} all function norms and inner products are in $L^2(\mathbb R_+)$, and operator norms are $L^2(\mathbb R_+) \to L^2(\mathbb R_+)$, unless otherwise specified.

For this range of $j$, we  put 
\begin{equation}\label{e:pjbigdef}
 P_j:= h^2 D_r^2 + V_j(r),
\end{equation}
regarded as a self-adjoint operator on $L^2(\mathbb R_+)$  with a Dirichlet 
boundary condition at $r=0$. 

We first prove resolvent estimates for $P_j$ analogous to \eqref{e:2n-1} and \eqref{e:taway}.

\begin{prop}\label{p:resestjbig}
Fix $s_1, \ s_2, \ s >1/2$ such that $s_1 + s_2>2$. Then
\begin{equation}\label{e:resestjbig}
 \| (1+r)^{-s_1}(P_j - E_j - i \varepsilon)^{-1} (1+r)^{-s_2} \| \le C h^{-2},
\end{equation}
and
\begin{equation}\label{e:resestjbigchi}
 \|(1 + r)^{-s}\chi(r)(P_j - E_j - i \varepsilon)^{-1}(1+r)^{-s}\| + \|(1 + r)^{-s}(P_j - E_j - i \varepsilon)^{-1}\chi(r) (1+r)^{-s}\|  \le Ch^{-1},
\end{equation}
for all $\varepsilon \in \mathbb R \setminus 0$, $j \in \mathbb N$ such that $E_j \le E_*$, where
\[
 \chi(r) = \sqrt{V_L(r) + f(r)^{-4/(d-1)} - 1}.
\]
\end{prop}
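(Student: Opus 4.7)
The plan is to reduce Proposition~\ref{p:resestjbig} to Theorem~\ref{t:semi} applied to a carefully chosen model operator, and then pass to $P_j$ by perturbation. Decompose $V_j = \tilde V_j + W_j$ with
\[
\tilde V_j := V_L + h^2\sigma_j^2(f^{-4/(d-1)} - 1), \qquad W_j := hV_S + h^2 f''/f,
\]
and set $\tilde P_j := h^2 D_r^2 + \tilde V_j$ on $L^2(\mathbb R_+)$ with Dirichlet boundary condition at $0$. First I would verify that $\tilde V_j$ satisfies the repulsivity hypothesis~\eqref{e:vmhyp} with $\delta_V = \delta_0$, uniformly in $j$ and $h$. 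The $V_L$ contribution is immediate, and for the centrifugal-type piece $U_j := h^2\sigma_j^2(f^{-\alpha}-1)$ with $\alpha := 4/(d-1)$, a direct computation gives $U_j'/U_j = -\alpha f'/(f(1-f^\alpha))$, and combining the elementary inequality $f(1-f^\alpha) \le \alpha(1-f)$ for $f \in (0,1]$ (via the mean value theorem) with~\eqref{e:fdelta0} yields $U_j'/U_j \le -\delta_0/(1+r)$, so $\tilde V_j' \le -\delta_0(1+r)^{-1}\tilde V_j$.

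Since $\zeta := E_j + i\varepsilon \in \mathbb C \setminus [0,\infty)$, Theorem~\ref{t:semi} applied with $V_D = \tilde V_j$ then gives
\[
\|(1+r)^{-s_1}(\tilde P_j - \zeta)^{-1}(1+r)^{-s_2}\| \le Ch^{-2}, \qquad \|\tilde V_j^{1/2}(1+r)^{-1/2}(\tilde P_j - \zeta)^{-1}(1+r)^{-s}\|\le Ch^{-1},
\]
with constants uniform in $j$ since those in Theorem~\ref{t:half} depend only on $\delta$ and $\delta_V$. Hypothesis~\eqref{e:estarimp} yields $h^2\sigma_j^2 \ge E_0 f(5)^{4/(d-1)} =: c_0 > 0$, so $\tilde V_j \ge \min(1,c_0)\chi^2$ pointwise. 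Together with $(1+r)^{-s} \le (1+r)^{-1/2}$ for $s > 1/2$, the second display converts to the $\chi$-weighted bound~\eqref{e:resestjbigchi} for $\tilde P_j$, with the adjoint handling the right-hand variant, while the first display is the analogue of~\eqref{e:resestjbig}.

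The remaining task is to pass from $\tilde P_j$ to $P_j$ via the second resolvent identity $(P_j-\zeta)^{-1} = (\tilde P_j-\zeta)^{-1} - (\tilde P_j-\zeta)^{-1}W_j(P_j-\zeta)^{-1}$, and this is the main obstacle. One has $|W_j| \le Ch$ on $[0,5]$ and $|W_j| \le Ch^2(1+r)^{-2-\delta_0}$ on $[5,\infty)$ (by~\eqref{e:fdecay} and the fact that $f$ is bounded below), but the naive estimate $\|W_j\|_\infty \cdot h^{-2} = Ch^{-1}$ is too large for a Neumann series to close. The saving observation is that $W_j$ is essentially localized where $\chi$ is bounded below: on $[0,5]$, $\chi^2 \ge f(5)^{-4/(d-1)} - 1 > 0$, so $|W_j/\chi^2| \le Ch$ there, and the factorization $W_j = \chi \cdot (W_j/\chi^2) \cdot \chi$ together with the improved $\chi$-weighted bound applied on both sides of the perturbation produces a geometric series with $O(h)$ ratio. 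An alternative, likely cleaner, route is to redo the positive commutator proof of Theorem~\ref{t:half} directly on $P_j$: the bad term $\int(wW_j)'|u|^2$ is controlled by $O(h)\|\sqrt{w'}u\|^2$ via~\eqref{e:fdecay} and the weighted Poincar\'e inequality used in the proof of~\eqref{e:pdsmall1}, and can be absorbed into the good term $-\int(w\tilde V_j)'|u|^2$ (bounded below by a uniform positive multiple of $\|u\|_{L^2([0,5])}^2$ thanks to~\eqref{e:estarimp}) together with the kinetic term $h^2\|\sqrt{w'}u'\|^2$. A Phragm\'en--Lindel\"of argument modeled on that in Section~\ref{s:halfline} then delivers the stated estimates uniformly over $E_j \le E_*$.
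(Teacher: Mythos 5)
Your setup through the $\chi$-weighted bound for $\tilde P_j := h^2D_r^2 + \tilde V_j$ is correct, and the decomposition $V_j = \tilde V_j + W_j$ is a reasonable variant of the paper's $V_j = V_M + h^2 f''/f$ (the paper puts $hV_S$ inside the repulsive part $V_M$; you put it in the perturbation, which is also workable). The verification that $\tilde V_j$ satisfies \eqref{e:vmhyp} via the elementary inequality $f(1-f^\alpha)\le\alpha(1-f)$ for $f\in(0,1]$ is clean.

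The gap is in the Neumann series step, and it is exactly the part that needs the paper's Lemma-type estimate \eqref{e:f''f}. The factorization $W_j = \chi\,(W_j/\chi^2)\,\chi$ with $|W_j/\chi^2|\le Ch$ is valid only on $[0,5]$: on $[5,\infty)$ you have $\chi^2 \ge C^{-1}(1-f)$ by \eqref{e:fab}, and the bound \eqref{e:fdecay} provides only an \emph{upper} bound on $1-f$, so $|f''|/\chi^2 \sim |f''|/(1-f)$ can be unbounded (e.g.\ $1-f = e^{-r^2}$, which is compatible with \eqref{e:fdecay}--\eqref{e:fdelta0} away from $r=0$, has $|f''|/(1-f)\sim r^2$). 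Your fallback for the tail, $|W_j|\le Ch^2(1+r)^{-2-\delta_0}$ combined with the $h^{-2}$ weighted resolvent bound, gives a Neumann ratio that is $O(1)$, not $o(1)$, so the series is not seen to converge. To close the argument one needs the paper's mean-value-theorem estimate $|f''|\le C(1-f)^{1/4}(1+r)^{-2-2\delta_0}$ (\eqref{e:mvt} iterated, giving \eqref{e:f''f}), which lets you extract a factor $\chi^{1/2}$ (not $\chi$) from $f''$, and then the \emph{interpolated} $h^{-3/2}$ bound carrying the weight $(f^{-4/(d-1)}-1)^{1/4}$ on one side; this produces a ratio of order $h^{1/2}$. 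Nothing in your write-up replaces this step.

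The alternative route (redoing the commutator argument directly on $P_j$) runs into the same obstruction: after rescaling to $P_D = D_r^2 + h^{-2}V_j$, the bad contribution of $(w\,f''/f)'$ on $[5,\infty)$ is of size $\int(1+r)^{-3-\delta_0}|u|^2$ with a constant coming from $\eqref{e:fdecay}$, and absorbing it into $\|\sqrt{w'}u'\|^2$ via the weighted Poincar\'e inequality used for \eqref{e:pdsmall1} requires that constant times $(\delta^{-1}+\delta_V^{-1})$ to be less than $1$, which is not available. Absorption into $-\langle(w\,h^{-2}\tilde V_j)'u,u\rangle$ on $[5,\infty)$ fails for the same reason the factorization fails: $h^{-2}\tilde V_j$ is only bounded below by a multiple of $h^{-2}(1-f)$, which may be small where $|f''|$ is not. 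So this route, as sketched, does not deliver \eqref{e:resestjbig} without again invoking the $|f''|\lesssim(1-f)^{1/4}$ gain.
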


\begin{proof} 

The idea of the proof is to apply Theorem \ref{t:half}; more precisely \eqref{e:resestjbig} corresponds to \eqref{e:pdsmall1} (see also \eqref{e:pdsmallh}), and \eqref{e:resestjbigchi} corresponds to \eqref{e:pdvbound} (see also \eqref{e:pdvboundh}). 

Before beginning the proof proper, by way of outline let us briefly discuss the terms in $V_j$, and explain how they each do or do not satisfy \eqref{e:vmhyp}. The term $h^2\sigma_j^2(f(r)^{-4/(d-1)} - 1)$ does satisfy it thanks to \eqref{e:fdelta0} and \eqref{e:estarimp}, and moreover those bounds and $f(r)<1$ for $r<6$ imply that the term is nontrivial when $r<6$. The term $V_L$ satisfies it, and we think of it as being harmless. The terms $V_S$ does not satisfy it, but we will show that its effect is compensated by that of the $h^2\sigma_j^2(f(r)^{-4/(d-1)} - 1)$ term. The most difficult term to treat is the $h^2 f''(r)f(r)^{-1}$ term. This term may prevent $h^{-2}V_j$ from satisfying \eqref{e:vmhyp}, but we will show that thanks to \eqref{e:estarimp} we can treat it as a small perturbation.

More precisely, let
\[
 V_M(r):= V_j(r) - h^2 f''(r)f(r)^{-1},
\]
and observe that for $h$ sufficiently small $V_M$ obeys \eqref{e:vmhyp} for some $\delta_V>0$, since $V_L$ and $f^{-4/(d-1)} -1$ obey it and  $|V_S| + |V_S'| \le C (f^{-4/(d-1)} - 1)$ thanks to \eqref{e:estarimp}. Indeed, to see that $f^{-4/(d-1)} -1$ obeys it we write, using $\alpha:=4/(d-1)$ and \eqref{e:fdelta0},
\[
- (f(r)^{-\alpha} - 1)' = \alpha f'(r)f(r)^{-\alpha -1} \ge \alpha \delta_0 \frac{f(r)^{-\alpha-1}- f(r)^{-\alpha}}{1+r}\ge \frac{f(r)^{-\alpha}-1}{C(1+r)},
\]
where we also used the fact that if $a<b$ then
\begin{equation}\label{e:fab}
 C^{-1}(1-f)\le f^a- f^b \le C(1-f).
\end{equation}

 Hence by \eqref{e:pdsmall1} with $V_D = h^{-2}V_M$, we have
\begin{equation}\label{e:vmh2}
  \| (1+r)^{-s_1}(h^2D_r^2 + V_M - E_j - i \varepsilon)^{-1} (1+r)^{-s_2} \| \le C h^{-2}.
\end{equation}
Note that by the resolvent identity
\begin{equation}\label{e:resid}\begin{split}
 (1+r)^{-s_1} (P_j -  E_j& -  i  \varepsilon)^{-1} (1+r)^{-s_2} = (1+r)^{-s_1} (h^2D_r^2 + V_M - E_j - i 
\varepsilon)^{-1} (1+r)^{-s_2}  \\ &\times \sum_{k=0}^\infty \left[-(1+r)^{s_2}h^2 f''(r)f(r)^{-1} (h^2D_r^2 + V_M  - E_j- i 
\varepsilon)^{-1}(1+r)^{-s_2}\right]^k,
\end{split}\end{equation}
the proof of \eqref{e:resestjbig} is reduced to the proof of
\begin{equation}\label{e:vm12}
\|(1+r)^{s_2}h^2 f''(r)f(r)^{-1} (h^2D_r^2 + V_M  - E_j- i 
\varepsilon)^{-1}(1+r)^{-s_2}\| \le 1/2.
\end{equation}
But by \eqref{e:pdvbound}, with $\theta = 1$ and $V_D = h^{-2}V_M \ge h^{-2}(f^{-4/(d-1)} - 1)/C$ (again using \eqref{e:estarimp}), we have
\[
\|(f(r)^{-4/(d-1)} - 1)^{\frac 12} (1+r)^{-\frac 12}(h^2D_r^2 + V_M  - E_j- i 
\varepsilon)^{-1}(1+r)^{-s_2}\| \le C h^{-1},
\]
and interpolating this with \eqref{e:vmh2} gives
\[
 \|(f(r)^{-4/(d-1)} - 1)^{\frac 14} (1+r)^{-\frac{s_1}2 - \frac 14}(h^2D_r^2 + V_M  - E_j- i 
\varepsilon)^{-1}(1+r)^{-s_2}\| \le C h^{-3/2}.
\]
 Hence to prove \eqref{e:vm12}, and consequently also \eqref{e:resestjbig}, it is enough to show that
\begin{equation}\label{e:f''f}
 (1+r)^{s_2} |f''(r)| \le C  (f(r)^{-4/(d-1)} - 1)^{\frac 14} (1+r)^{-\frac{s_1}2 - \frac 14}.
\end{equation}
To prove \eqref{e:f''f} we will use the fact that any bounded $\varphi \in C^2([r,\infty);[0,\infty))$ satisfies
\begin{equation}\label{e:mvt}
|\varphi'(r)|^2 \le 2 \sup\varphi \sup |\varphi''|,
\end{equation}
where the suprema are  taken over $[r,\infty)$.
Indeed, by Taylor's theorem, for every $t \ge 0$ there is $t_0 \in [r,r+t]$ such that
\[
t|\varphi'(r)| = |\varphi(r+t) - \varphi(r) - t^2  \varphi''(t_0)/2| \le \sup \varphi + t^2\sup |\varphi''|/2,
\]
and taking $t =  |\varphi'(r)|/ \sup|\varphi''|$  gives \eqref{e:mvt}. Applying \eqref{e:mvt} once with $\varphi = f'$ and once with $\varphi = 1 - f$ gives
\[
 |f''(r)|^4 \le 4 \sup |f'|^2 \sup |f'''|^2 \le 8 \sup(1 - f) \sup |f''| \sup |f'''|^2 = 8 (1 - f(r)) \sup |f''| \sup |f'''|^2,
\]
where the suprema are still all taken over $[r,\infty)$. Applying \eqref{e:fdecay} gives
\[
 |f''(r)| \le C (1 - f(r))^{\frac 14}(1+r)^{-2 - \frac {3 \delta_0}4}.
\]
By \eqref{e:fab} this implies \eqref{e:f''f} as long as $s_1 + 2 s_2 \le (7 + 3 \delta_0)/2$, which we may suppose without loss of generality. This completes the proof of \eqref{e:resestjbig}.

The proof of \eqref{e:resestjbigchi} proceeds along similar lines. Applying \eqref{e:resid}  with $s_1 = s_2 = s$ allows us to reduce the proof of the bound on the first term in \eqref{e:resestjbigchi} to the proof of
\begin{equation}\label{e:resestjbigchi1}
 \|(1 + r)^{-s}\chi(r)(h^2D_r^2 + V_M - E_j - i \varepsilon)^{-1}(1+r)^{-s}\|  \le Ch^{-1}.
\end{equation}
But \eqref{e:resestjbigchi1} follows from \eqref{e:pdvbound} with $\theta = 1$ and $V_D = h^{-2}V_M \ge h^{-2}(V_L + f^{-4/(d-1)} - 1)/C = h^{-2} \chi^2/C$. The bound on the second term of \eqref{e:resestjbigchi} follows from the bound on the first term after taking the adjoint.
\end{proof}

We will 
also need the following Agmon estimates:
 
 \begin{prop}
Let $R \in (0,5]$,  $\chi_- \in C_c^\infty((0,R)), \ \chi_+ \in C_c^\infty((R,\infty)),$ and $s>1/2$. Then
\begin{equation}\label{e:agmonoverlap}
 \| \chi_-  (P_j - E_j - i \varepsilon)^{-1} (1+r)^{-s}\|_{L^2(\mathbb R_+) \to H^1_h(\mathbb R_+)} +  \| (1+r)^{-s}  (P_j - E_j - i \varepsilon)^{-1} \chi_- \| \le C,
\end{equation}
\begin{equation}\label{e:agmondisjoint}
 \| \chi_- (P_j - E_j - i \varepsilon)^{-1} \chi_+ \|
\le e^{-1/(Ch)},
\end{equation}
for all $\varepsilon \in \mathbb R \setminus 0$, and $j \in \mathbb N$ such that $E_j \le E_*$.
\end{prop}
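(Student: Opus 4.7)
The plan is to exploit the fact that, for the range $E_j \le E_*$ under consideration, the compact set $\supp \chi_-$ lies in a classically forbidden region. Since $\supp \chi_- \subset [a,b]$ with $0<a\le b<R\le 5$ and $f$ is strictly increasing on $[0,6)$ with $f<1$ there, the condition \eqref{e:estarimp} together with $V_L\ge 0$ produces a uniform lower bound
\[
V_j(r) - E_j \ge c_0 > 0
\]
on a slightly larger interval $[0, r_+']$ with $r_+' < 5$, valid for every $j$ with $E_j \le E_*$ and for $h$ sufficiently small. Both estimates rest on this forbidden-region positivity, combined with the a priori bound \eqref{e:resestjbig}.

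For \eqref{e:agmonoverlap}, the approach is a positive commutator (elliptic) estimate. Taking $\chi \in C^\infty([0,\infty);[0,1])$ with $\supp\chi\subset[0,r_+')$ and $\chi=1$ on $\supp \chi_-$, I would pair $(P_j - E_j - i\varepsilon)u = (1+r)^{-s}v$ against $\chi^2 u$. The Dirichlet condition at $r=0$ kills the boundary term, and integration by parts together with Cauchy--Schwarz yields
\[
(h^2/2)\|\chi u'\|^2 + (c_0/2)\|\chi u\|^2 \le C\|v\|^2 + Ch^2\|u\|^2_{L^2(\supp \chi')}.
\]
Inserting the a priori bound $\|u\|_{L^2(\supp \chi')}\le Ch^{-2}\|v\|$ from \eqref{e:resestjbig} directly would only give $\|\chi u\|_{H^1_h}\le Ch^{-1}\|v\|$. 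The point is then to iterate the inequality once more on a slightly enlarged cutoff $\tilde\chi$ equal to $1$ on $\supp \chi$ and still supported in $[0,r_+')$; applying \eqref{e:resestjbig} at this second level to bound $\|u\|^2_{L^2(\supp\tilde\chi')}$ and substituting back pulls out an extra factor of $h^2$, leading to $\|\chi_- u\|_{H^1_h}\le C\|v\|$. The companion $L^2\to L^2$ bound on $\|(1+r)^{-s}(P_j - E_j - i\varepsilon)^{-1}\chi_-\|$ then follows by duality, since the estimate just proved holds for either sign of $\varepsilon$.

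For \eqref{e:agmondisjoint}, I would carry out a weighted Agmon argument. Fix $r_1<r_2<\min(\min\supp \chi_+,5)$ with $r_1>b$, a cutoff $\psi$ equal to $1$ on $[0,r_1]$ and $0$ on $[r_2,\infty)$, and a weight $\varphi\ge0$ equal to $\varphi_{\max}>0$ on $[0,b]$, equal to $0$ on $[r_1,\infty)$, with $|\varphi'|\le\gamma$ where $\gamma^2\le c_0/2$. Setting $\tilde u := e^{\varphi/h}u$, the standard integration by parts identity obtained from pairing $e^{\varphi/h}(P_j - E_j - i\varepsilon)u = e^{\varphi/h}\chi_+v$ against $\psi^2\tilde u$ yields
\[
\int(V_j - E_j - (\varphi')^2)\psi^2|\tilde u|^2\,dr + h^2\int\psi^2|\tilde u'|^2\,dr = \mathrm{error},
\]
where the source term vanishes because $\supp\psi\cap\supp \chi_+ = \varnothing$ and the $i\varepsilon u$ contribution is killed by $\re$. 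The error is $O(h)\|\tilde u\|^2_{L^2(\supp\psi')}$, and since $\varphi\equiv0$ on $\supp\psi'$ this reduces to $O(h)\|u\|^2_{L^2(\supp\psi')}\le Ch^{-3}\|v\|^2$ by \eqref{e:resestjbig}. Coercivity $V_j - E_j - (\varphi')^2\ge c_0/2$ on $\supp\psi$ then gives $\|\psi\tilde u\|^2\le Ch^{-3}\|v\|^2$, and since $\varphi=\varphi_{\max}$ on $\supp \chi_-$,
\[
\|\chi_- u\|^2 \le e^{-2\varphi_{\max}/h}\|\psi\tilde u\|^2 \le Ch^{-3}e^{-2\varphi_{\max}/h}\|v\|^2 \le e^{-1/(Ch)}\|v\|^2
\]
for $h$ sufficiently small.

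The main obstacle is uniformity in $j$ as the effective potential $V_j$ becomes unbounded. All constants in the argument (the lower bound $c_0$, the weight parameters $\gamma$ and $\varphi_{\max}$, and the constants hidden in $C$) must be independent of $j$. This uniformity is precisely what \eqref{e:estarimp} delivers: it forces $h^2\sigma_j^2 f(r)^{-4/(d-1)}\ge E_0$ on $[0,5]$ whenever $E_j\le E_*$, which is the key input behind the uniform positivity $V_j - E_j\ge c_0$ that in turn drives the positive commutator iteration and the Agmon weight construction.
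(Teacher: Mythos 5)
Your proposal is correct, but you take a genuinely different route for the overlap estimate \eqref{e:agmonoverlap}. The paper proves both inequalities with a single exponentially conjugated (Agmon) argument: it sets $P_\varphi := e^{\varphi/h}(P_j - E_j - i\varepsilon)e^{-\varphi/h}$ with $\varphi = m\varphi_0$, uses the uniform positivity $V_j - \varphi'^2 - E_j > c_0$ to bound $\|w\|$, absorbs the commutator loss $\|[P,\chi_0]u\|\lesssim h^{-1}\|v\|$ (itself from \eqref{e:resestjbig} plus an elliptic estimate) into the $e^{-2m/h}$ prefactor coming from $\int_I|u|^2 = e^{-2m/h}\int_I|w|^2$, and then obtains \eqref{e:agmondisjoint} for free by observing that when $\supp v \subset (R,\infty)$ the source term $e^{\varphi/h}\chi_0 v$ vanishes identically, leaving only the exponentially suppressed $e^{-2m/h}h^{-2}\|v\|^2$. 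You instead prove \eqref{e:agmonoverlap} with no conjugation at all, via a two-pass positive-commutator bootstrap: a first pass with the enlarged cutoff $\tilde\chi$ and the a priori bound \eqref{e:resestjbig} gives $\|\tilde\chi u\|_{H^1_h}\lesssim h^{-1}\|v\|$, and a second pass with $\chi$ (whose derivative is supported where $\tilde\chi=1$) feeds that improved bound into the localized error $Ch^2\|u\|^2_{L^2(\supp\chi')}$ to give $\le C\|v\|^2$. This is an elementary alternative that avoids the exponential weight for \eqref{e:agmonoverlap}, at the cost of an explicit two-stage iteration; both arguments ultimately rest on the same uniform-in-$j$ forbidden-region positivity $V_j - E_j \ge c_0 > 0$ on $[0,r_+']$ with $r_+'<5$, which is what \eqref{e:estarimp} delivers. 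For \eqref{e:agmondisjoint} your weighted Agmon argument is essentially the same device as the paper's, just done freestanding rather than extracted as a byproduct of the conjugated proof of \eqref{e:agmonoverlap}.

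Two small points worth recording. First, to get the uniform lower bound $V_j - E_j \ge c_0$ on $[0,r_+']$ you need more than $V_L \ge 0$: you need the strict monotonicity $f(r_+') < f(5)$ (which follows from \eqref{e:fdelta0} and $1-f>0$ on $[0,6)$), so that $h^2\sigma_j^2 f(r)^{-4/(d-1)} \ge E_0\,f(r_+')^{-4/(d-1)}/f(5)^{-4/(d-1)} \ge (1+\eta)E_0$ for some fixed $\eta>0$; the $V_L$ and $hV_S$ and $h^2 f''/f$ terms are then $O(h)$ perturbations. Second, when invoking \eqref{e:resestjbig} to get $\|u\|_{L^2(\supp\chi')}\le Ch^{-2}\|v\|$ you need to choose $s_2\le s$ with $s_1+s_2>2$, $s_1,s_2>1/2$, which is always possible for $s>1/2$ but deserves a word. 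Neither affects the validity of the argument.
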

 Recall that the norms without subscripts are $L^2(\mathbb R_+) \to L^2(\mathbb R_+)$ here, and that $\chi_-$ is supported in the classically forbidden region for $P_j-E_j$.

 \begin{proof}
These are similar to the usual Agmon estimates as in  \cite[\S 7.1]{z} but we keep track 
of the $j$ dependence. 

Let $v \in L^2(\mathbb R_+)$, and let $u := (P_j - E_j - i 
\varepsilon)^{-1}(1+r)^{-s}v$. Fix $\varphi_0 \in C_c^\infty((0,R);[0,1])$ which is 
identically 1 on a neighborhood $I$ of $\supp \chi_-$, and let $\varphi(r) := 
m\varphi_0(r)$, for  a constant $m$ to be chosen 
later. Then define
\[\begin{split}
P_\varphi :&= e^{\varphi/h}(P_j- E_j - i \varepsilon)e^{-\varphi/h} \\
&= h^2D_r^2 + 2 i  \varphi' h D_r + V_j - \varphi'^2 + h 
\varphi'' - E_j - i \varepsilon.
\end{split}\]
Put $w := \chi_0 e^{\varphi/h}u$, where $\chi_0 \in C_c^\infty((0,R))$ is 1 near 
$\supp \varphi$. Using $ \re \langle 2 h \varphi' w', w \rangle = - h \langle 
\varphi'' w, w\rangle$,  write
\[
\re \langle P_\varphi w, w \rangle = \|hw'\|^2 + \langle (V_j - \varphi'^2  - 
E_j) w, w \rangle.
\]
We now observe that, using \eqref{e:estarimp} and the fact that $1 - f(r)^{-4/(d-1)}> 1 - f(5)^{-4/(d-1)} > 0$ for $r \in (0,5)$,
we can choose $m>0$ small enough, independent of $h$ 
and $j$, such that there is $c_0>0$ independent of $h$ and $j$ for 
which $V_j - \varphi'^2  - E_j>c_0$ on $\supp w$ for $h$ small enough. This 
implies
\[
\|w\| \le C\| P_\varphi w\| \le C \|e^{\varphi/h}\chi_0 v\|+ C \|[P, \chi_0]u\|,
\]
where we used $\varphi \chi_0'  = 0$ to deduce $ [P_\varphi,\chi_0]   e^{\varphi/h}u
= [P, \chi_0]u$. We use an elliptic estimate to bound the commutator term: for $\chi_1 \in C_c^\infty((0,R))$ we have, 
using $V_j - E_j \ge V_0 - E_0 \ge - C$,
\begin{equation}\label{e:hu'}\begin{split}
C \|\chi_1 v\|\|\chi_1u\| &\ge \re \langle  (1+r)^{-s} v, \chi_1^2u\rangle = \re \langle (P_j - E_j) u, \chi_1^2 u \rangle 
\\ &\ge \|\chi_1 hu'\|^2 - C h\|\chi_1 h u' u\|_{L^1(\mathbb R_+)} - C\|\chi_1 u\|^2,
\end{split}\end{equation}
from which it follows that, provided $\chi_2 = 1$ near $\supp \chi_0$,
\[
\|[P, \chi_0]u\| \le Ch \|\chi_2 u\| + Ch \|\chi_2 v\| \le C h^{-1}\|v\|,
\]
where we used \eqref{e:resestjbig}. Consequently
\begin{equation}\label{e:aginti}
\int_I |u|^2 = e^{-2m/h}\int_I |w|^2 \le C e^{-2m/h}\left( \|e^{\varphi/h}\chi_0 v\|^2 +  h^{-2}\|v\|^2 \right) \le C\|v\|^2,
\end{equation}
where we used $\varphi \le m$.

To estimate $u'$ we apply \eqref{e:hu'} with $\chi_1 \in 
C_c^\infty(I)$, giving
\[
\|\chi_1 hu'\|^2  \le C \left(\int_I |u|^2dr + \|\chi_1 hv\|^2\right),
\]
which implies the bound on the first term of \eqref{e:agmonoverlap}. The bound on the second term follows from taking the adjoint, and \eqref{e:agmondisjoint} follows from the fact that if $\supp v \subset (R,\infty)$, then $\chi_0 v = 0$ and we can improve \eqref{e:aginti} to
\[
 \int_I |u|^2 = e^{-2m/h}\int_I |w|^2 \le C e^{-2m/h} h^{-2}\|v\|^2.
\]
 \end{proof}

\subsection{Analysis when $E_j > E_*$}\label{s:jsmall}
In \S\ref{s:jsmall} all function norms and inner products are in $L^2(\mathbb R)$, and operator norms are $L^2(\mathbb R) \to L^2(\mathbb R)$, unless otherwise specified. 

For this range of $j$ the Agmon estimate \eqref{e:agmondisjoint} must be replaced by a propagation of 
singularities estimate. It is convenient to introduce a complex absorbing 
barrier and to work over $\mathbb R$: let $W_e \in C^\infty(\mathbb R;[0,1])$ be 
$1$ near $(-\infty,1]$ and $0$ near $[2,\infty)$, and let 
\[V_{j,0} := \chi_0 V_j,\]
 where $\chi_0 \in C^\infty(\mathbb R;[0,1])$ is $0$ near 
$(-\infty,0]$ and $1$ near $[1,\infty)$. We now put
\[
 P_j:= h^2 D_r^2 + V_{j,0}(r) - i W_e(r),
\]
regarded as an unbounded operator on $L^2(\mathbb R)$ with domain $H^2(\mathbb R)$. 
We will prove
\begin{prop}
For any $s>1/2$ we have
\begin{equation}\label{e:resestjsmallh22}
 \|(1+r_+)^{-s} (P_j - E_j - i \varepsilon)^{-1}(1+r_+)^{-s}\| \le Ch^{-1},
\end{equation}
where $r_+ := \max\{0,r\}$. For any
 $\chi_- \in C_c^\infty((0,3)), \ \chi_+ \in C_c^\infty((3,\infty)), 
\ \psi \in C_c^\infty((0,\infty))$, we have
\begin{equation}\label{e:propsing2}
\| \chi_-(r) (P_j - E_j - i \varepsilon)^{-1}   \chi_+(r) \psi(hD_r) \| = O(h^\infty).
\end{equation}
Both \eqref{e:resestjsmallh22} and \eqref{e:propsing2} hold uniformly for all $\varepsilon > 0$, and for all $j \in \mathbb N_0$ such that $E_j > E_*$.
\end{prop}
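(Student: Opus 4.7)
The plan is to treat the two estimates separately, exploiting that the constraint $E_j > E_*$ puts $P_j$ in a semiclassical nontrapping regime at uniformly positive energy. In this range, $h^2\sigma_j^2 < E_0 - E_*$ is bounded, so $V_{j,0}$ and its derivatives are bounded uniformly in $j$, and the monotonicity of $V_L$ and of $f^{-4/(d-1)}-1$ (from \eqref{e:fdelta0} together with \eqref{e:fab}) makes $V_{j,0}$ approximately repulsive on $r \ge 1$, with the $h^2 f''/f$ piece being an $O(h^2)$ perturbation.

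For \eqref{e:resestjsmallh22}, I would adapt the positive commutator argument of Theorem \ref{t:half} to $\mathbb R$. Multiply $(P_j-E_j-i\varepsilon)u$ by $-w(r)u'$ and take real parts, where $w$ is smooth and nondecreasing, vanishing for $r \le 0$ and equal to $1 - \tfrac{\delta_0}{\delta_0+\delta}(1+r)^{-\delta}$ for $r \ge 1$, so that $(wV_{j,0})' \le 0$ on $\supp w$ as in \eqref{e:wvd}. The resulting analog of \eqref{e:compsq}, combined with $E_j \ge E_* > 0$ (which lets us absorb the kinetic term without a Phragm\'en--Lindel\"of step and without the $|z|^{-1/2}$ loss), yields
\[
\|\sqrt{w'}\,u\| + \|\sqrt{w'}\,u'\| \le C\|(1+r_+)^{s}(P_j - E_j - i\varepsilon)u\|,
\]
which controls $(1+r_+)^{-s}u$ on $r \ge 1$. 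On $r \le 1$, where $(1+r_+)^{-s} \equiv 1$ and $W_e \equiv 1$ on a neighborhood of $(-\infty,1]$, the dissipativity identity $\langle W_e u, u\rangle + \varepsilon\|u\|^2 = -\im\langle(P_j-E_j-i\varepsilon)u, u\rangle$ provides the needed $L^2$ bound, and an elliptic estimate (using that $|V_{j,0} - E_j|$ is bounded uniformly in $j$) bridges the transition region $r \in [0,2]$. The main obstacle is verifying that every constant is genuinely uniform in $j$; this is exactly what the uniform bounds on $h^2\sigma_j^2$ and on $V_{j,0}$ together with its derivatives provide.

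For \eqref{e:propsing2}, I would invoke semiclassical propagation of singularities. The operator $\chi_+(r)\psi(hD_r)$ has semiclassical wavefront set in $\supp\chi_+ \cap \supp\psi \subset \{r > 3,\ \xi > 0\}$, and the Hamilton flow of the real principal symbol $p(r,\xi) = \xi^2 + V_{j,0}(r)$ satisfies $\dot r = 2\xi > 0$ on $\{\xi > 0\}$ and $\dot\xi = -V_{j,0}'(r) \ge 0$ for $r \ge 1$; bicharacteristics starting in $\{r > 3,\ \xi > 0\}$ therefore flow monotonically to $r = +\infty$ and never enter $\supp\chi_- \subset (0,3)$ under forward propagation. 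Since $\varepsilon > 0$ makes $(P_j - i\varepsilon)^{-1}$ outgoing, it propagates wavefront sets along the forward Hamilton flow (see e.g.\ \cite[Theorem 5.34]{dz}), so composition with $\chi_-(r)$ on the left yields the $O(h^\infty)$ bound. Uniformity in $j$ again comes from the uniform control on $V_{j,0}$.
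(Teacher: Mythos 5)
Your approach diverges from the paper's in an interesting way for \eqref{e:resestjsmallh22}, but both halves of the proposal have real gaps.

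For \eqref{e:resestjsmallh22}, you propose to run the integration-by-parts commutator argument of Theorem \ref{t:half} directly on $\mathbb R$, using $E_j \ge E_* > 0$ to skip the Phragm\'en--Lindel\"of step, with the $-iW_e$ barrier handling $r \le 0$. This is genuinely different from the paper, which proves \eqref{e:resestjsmallh22} via a microlocal positive-commutator argument in the scattering calculus (quantizing the escape function $q(r,\rho)=\tilde q_\delta(-r\rho)\chi_q$) so that the same machinery yields \eqref{e:propsing2}. The paper explicitly says an integration-by-parts proof of the first estimate is ``probably possible,'' so the route is not off-limits, but your sketch has problems: (i) pairing $(P_j-E_j-i\varepsilon)u$ against $-wu'$ produces an additional non--sign-definite term $\pm 2\im\langle (W_e+\varepsilon)u,\,wu'\rangle$ from the $-iW_e$ piece, and you never explain how this is absorbed; (ii) your claimed intermediate bound $\|\sqrt{w'}u\|+\|\sqrt{w'}u'\| \le C\|(1+r_+)^s(P_j-E_j-i\varepsilon)u\|$ has no $h$'s in it, which would imply an $O(1)$ resolvent bound rather than the sharp $O(h^{-1})$ — the commutant must carry an $h$ (pair against $-w h u'$) and the factor of $h^{-1}$ emerges after closing the estimate; (iii) the proposed ``elliptic estimate bridges $r\in[0,2]$'' is not available in the naive sense: $P_j-E_j-i\varepsilon$ is not semiclassically elliptic on $[1,2]$ since the characteristic set $\{\rho^2+V_{j,0}=E_j\}$ intersects $T^*([1,2])$ and $W_e$ is only partially on there, so some microlocal or weighted argument is still needed in the transition zone.

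For \eqref{e:propsing2}, your dynamical observation ($\dot r = 2\rho > 0$ on $\{\rho>0\}$, $\dot\rho = -V_{j,0}' \ge 0$ for $r \ge 1$, so the forward flowout of $\supp\chi_+\times\supp\psi$ never re-enters $\supp\chi_-$) is exactly the relevant mechanism, and the paper exploits the same fact. But the citation of a general propagation-of-singularities theorem is not a proof here, for two reasons. First, uniformity in $j$ is the entire content of the statement: the energy surfaces $F_j$, the thresholds $R_j$, and the flows all vary with $j$, and a textbook theorem gives constants depending on the operator; you assert ``uniformity comes from the uniform control on $V_{j,0}$'' without turning that into a quantitative statement. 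Second, the problem is genuinely noncompact — the flowout escapes to $r=\infty$ — so the escape function must live in a scattering symbol class and decay in $r$, which is exactly what the paper's $q_k \in S^{-\infty}_{k+3/2-\delta}$ is for; standard compactly-microlocalized propagation arguments do not directly apply. The paper's inductive argument (building escape functions $q_k$ that vanish near $\supp\chi_+\times\supp\psi$ and near $F_j \setminus U$ at each step, together with the elliptic estimate \eqref{e:ellippd} to control remainders) is precisely what makes this rigorous and uniform in $j$; you would need to supply something equivalent rather than appeal to a black-box theorem.
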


Note that since $E_j$ is bounded from below away from 0, we can think of \eqref{e:resestjsmallh22} as the analogue of \eqref{e:pdbig} in this setting; we do not need a weight for $r <0$ because the $-iW_e$ term makes the operator $P_j - E_j - i \varepsilon$  semiclassically elliptic there. It is also similar to the usual nontrapping resolvent estimate as in \cite{vz} and in other papers cited therein, but we need an estimate which is uniform in $j$.

The propagation of singularities estimate \eqref{e:propsing2} is a microlocalized version of \eqref{e:resestjsmallh22}. The improved bound is due to the fact that solutions to the classical equations of motion $\dot r = 2 \rho$, $\dot \rho = - V'_j(r)$ with $r(0)>3$ and $\rho(0)>0$ cannot have $r(t)<3$ for any $t >0$.

\begin{proof}[Proof of \eqref{e:resestjsmallh22}]
 We prove \eqref{e:resestjsmallh22} using a microlocal positive commutator argument, rather than (as is probably possible) integration by parts arguments as in the proof of \eqref{e:pdbig}. We do this because the proof of \eqref{e:propsing2} follows along very similar lines, and the latter estimate does not seem to be provable by integration by parts arguments. The idea is to construct a microlocal commutant, based on the $w(r)\partial_r$ of the proof of \eqref{e:pdbig}, but which is nonnegative. This will be obtained as the quantization of an escape function, defined in \eqref{e:qdef} below. 

 As in \cite{vz} we will use the semiclassical scattering calculus, and we begin by recalling its relevant properties. We use $(r,\rho)$ to denote points in $T^*\mathbb R$, and for $l, \ m \in \mathbb R$ we define the symbol class $S^m_l$ to be the set of $a \in C^\infty(T^*\mathbb R)$ such that, for any $n_1, \ n_2 \in \mathbb N_0$ there is $C_{n_1, n_2}$ such that
\begin{equation}\label{e:scdef}
 |\partial^{n_1}_r\partial^{n_2}_\rho a(r,\rho)| \le C_{n_1, n_2}(1 + |r|)^{l-n_1}(1 + |\rho|)^{m- n_2},
\end{equation}
for all $(r,\rho) \in T^*\mathbb R$. We also write $S^\infty_l := \bigcup_m S^m_l$, $S^{-\infty}_l := \bigcap_m S^m_l$, and similarly for $S^m_\infty$ and $S^m_{-\infty}$.
 Below we will consider symbols depending on $h$ and $j$, and the constants $C_{n_1, n_2}$ in \eqref{e:scdef} will always be uniform with respect to those parameters.
For such $a$, we denote the semiclassical quantization by $\Op_h(a)$, which we define by 
\begin{equation}\label{e:quantdef}
\Op_h(a)u:= \frac 1 {2\pi  h} \int \!\!\! \int e^{i(r - r')\rho/h} a(r,\rho) 
u(r')dr'd\rho.
\end{equation}
When a symbol is denoted by a lowercase letter (with possible subscripts and superscripts), we will denote its quantization by the corresponding uppercase letter (with the same subscripts and superscripts, if any).

We recall the composition and adjoint formulas. If  $a \in S^{m_1}_{l_1}$ and $b \in S^{m_2}_{l_2}$, then there is $a \# b\in S^{m_1+m_2}_{l_1+l_2}$ such that
\[
 AB = \Op_h(a \# b),
\]
and, for any $N \in \mathbb N$,
\begin{equation}\label{e:compform}\begin{split}
 a \# b (r,\rho) &= e^{-ih\partial_{r'}\partial_{\rho'}}\left(a(r,\rho')b(r',\rho)\right)\Big|_{(r,\rho) = (r',\rho')} \\&= \sum_{k=0}^{N-1} \frac {(-ih)^k}{k!} \partial_\rho^k a(r,\rho)\partial_r^kb(r,\rho) + h^Nz_N(r,\rho),
\end{split}\end{equation}
where $z_N \in S^{m_1+m_2 -N}_{l_1+l_2 - N}$ is given by
\begin{equation}\label{e:comprem}\begin{split}
z_N(r,\rho):= \frac {(-i)^N} {(N-1)!}\int_0^1  (1-t)^{N-1}e^{-ith\partial_{r'}\partial_{\rho'}}\left(\partial_{\rho'}^Na(r,\rho') \partial_{r'}^N b(r', \rho)\right)\Big|_{(r,\rho) = (r',\rho')}   dt.
\end{split}\end{equation}
Indeed, \cite[Theorem 4.14]{z} gives the formula for Schwartz symbols, and \cite[Theorems 4.13 and 4.18]{z} give it for a larger class of symbols than the ones we consider, but with weaker bounds on $z_N$. The statement that $z_N \in S^{m_1+m_2 -N}_{l_1+l_2 - N}$ follows from applying \cite[Theorem 4.17]{z} to \eqref{e:comprem}. See also \cite[Proposition E.8]{dz}, \cite[(3) and (9)]{par}, \cite{shub}, and \cite[\S18.5]{hor} for similar expansions, and \cite{hs} for a much more general version.

Similarly, if $a \in S^{m}_{l}$ there is $a^* \in S^{m}_{l}$  such that the formal adjoint of $A$ is given by
\[
 A^* = \Op_h(a^*),
\]
and, for any $N \in \mathbb N$,
\begin{equation}\label{e:adform}
a^*(r,\rho) = e^{-ih\partial_{r}\partial_{\rho}} \bar a(r,\rho) =  \sum_{k=0}^{N-1} \frac {(-ih)^k}{k!} \partial_r^k \partial_\rho^k \bar a(r,\rho) + h^Nz_N(r,\rho),
\end{equation}
where this time $z_N \in S^{m -N}_{l - N}$ is given by
\[
z_N(r,\rho):= \frac {(-i)^N} {(N-1)!}\int_0^1 (1-t)^{N-1}e^{-ith\partial_{r}\partial_{\rho}}\partial_{r}^N \partial_{\rho}^N \bar a(r,\rho) dt.
\]

Let
\[
 p_j:= \rho^2 + V_{j,0}(r) - i W_e(r)
\]
be the semiclassical symbol of $P_j$ (in the sense that $p_j \in S^2_0$ and $P_j = \Op_h(p_j)$), let
\[
 R_j:= \inf \{r>0 \mid \textrm{both } V_j(r) = V_{j,0}(r) \textrm{ and } V_{j,0}(r)\le E_*/2\},
\]
so that $R_0 \le R_1  \le \cdots$, and let
\[
F_j := \{(r,\rho)\mid r \ge 1 \textrm{ and } \rho^2 \le 2E_0\} \setminus \{(r,\rho) 
\mid R_j < r \textrm{ and } \rho^2 < E_*/3\}.
\]
Note that each $F_j$ is a closed neighborhood of the energy surface $p_j = E_j$, and they have been chosen such that they form a nested sequence $F_0 \subset F_1 \subset \cdots$. Moreover, since we only consider $j$ such that $E_j > E_*$, all of the $F_j$ agree outside of a compact set: see Figure \ref{f:fj}.

\begin{figure}[h]
\labellist
\small
\pinlabel $r$ [l] at 239 87
\pinlabel $r$ [l] at 596 87
\pinlabel $\rho$ [l] at 17 195
\pinlabel $\rho$ [l] at 374 195
\pinlabel $R_j$ [l] at 96 87
\pinlabel $1$ [l] at 24 87
\pinlabel $\sqrt{E_*/3}$ [l] at -50 120
\pinlabel $\sqrt{2E_0}$ [l] at -35 180
\pinlabel $\textrm{the case }R_j>1$ [l] at 74 -15
\pinlabel $\textrm{the case }R_j<1$ [l] at 431 -15
\endlabellist
\includegraphics[width=6cm]{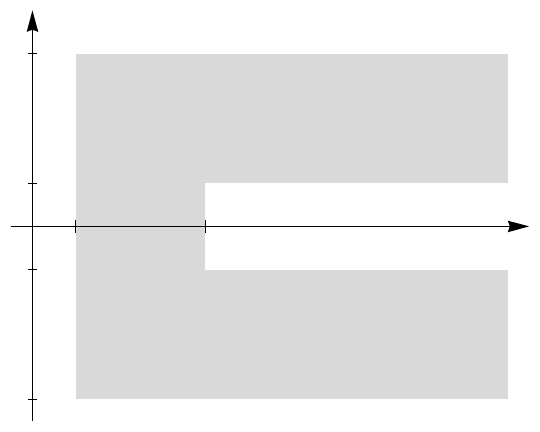}
\hspace{2cm}
\includegraphics[width=6cm]{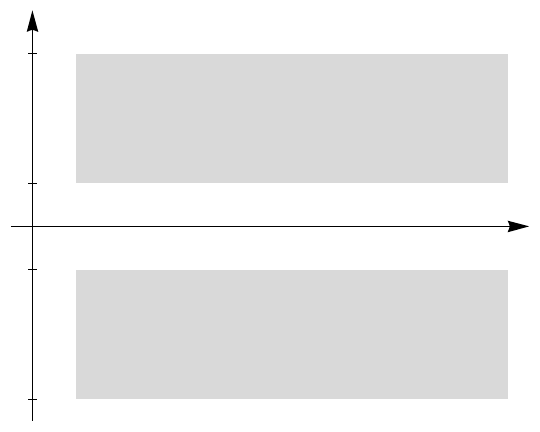}
\vspace{.5cm}
 \caption{The shaded regions are the sets $F_j$. They are closed nested neighborhoods of the energy surfaces $p_j = E_j$ which agree outside of a compact set.}\label{f:fj}
\end{figure}

Observe 
that we have $ |p_j - E_j - i \varepsilon| \ge c(1 + \rho^2)$ on $T^*\mathbb R \setminus F_j$, for some $c>0$, which implies the following elliptic estimate: for any $a \in S^m_l, \ a' \in S^{m-2}_l$ satisfying  
$\supp a \cap F_j = \varnothing$ and $|a'(r,\rho)| \ge (1 + |r|)^l(1+|\rho|)^{m-2}$ for $(r,\rho) \in \supp a$, and for any $N \in \mathbb R$, we have 
\begin{equation}\label{e:ellippd}
 \|Au\| \le C\| A'(P_j - E_j -i \varepsilon)u\| + h^N\|Z_N u\|,
\end{equation}
for some $z_N \in S^{m-N}_{l-N}$.
This follows from \eqref{e:compform} by the usual iterative elliptic parametrix construction  as in \cite[Theorem E.32]{dz}.

To handle $F_j$, we define an escape function (based on the usual 
$-r\rho$ but modified to be nonnegative near $F_j$ and more slowly growing) as follows. For $\delta \in (0,1/4)$, take $\tilde q_\delta \in C^\infty(\mathbb R)$  with $\tilde q_\delta(x) = x^\delta$ for $x \ge 2$, $\tilde q_\delta(x) = |x|^{- \delta}$ for $x \le -2$, and $\tilde q_\delta'(x)>0$ for $|x|<2$, 
and put
\begin{equation}\label{e:qdef}
 q (r,\rho):= \tilde q_\delta(-r\rho)\chi_q(r,\rho),
\end{equation}
where  $\chi_q \in S^{-\infty}_0$ is real valued, is $1$ near all of the $F_j$, and vanishes in a neighborhood of
\[
 \{(r,\rho) \mid r \not\in (-1,1 + \max_jR_j) \textrm{ and } \rho =0\}
\]
whose boundary consists of two line segments and four half lines as in Figure \ref{f:chiq}.

\begin{figure}[h]
\labellist
\small
\pinlabel $r$ [l] at 239 25
\pinlabel $\rho$ [l] at 43 70
\pinlabel $-1$ [l] at 10 25
\pinlabel $1+\max_j\!R_j$ [l] at 100 23
\endlabellist
\includegraphics[width=6cm]{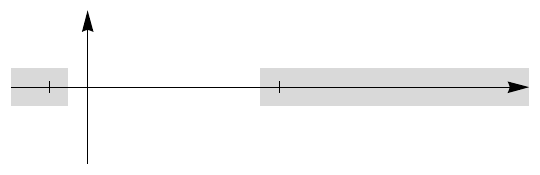}
 \caption{The kind of neighborhood where $\chi_q$ must vanish.}\label{f:chiq}
\end{figure}

Then $q \in S^{-\infty}_\delta$, and near $F_j$ we have
\begin{equation}\label{e:pbpq}
  \{ \re  p_j, q^2\} = 2(- 2 \rho^2 + r V'_j(r))\tilde q_\delta'(-r\rho)\tilde q_\delta(-r\rho)  \le -c r^{-1-2\delta},
\end{equation}
for some $c>0$ (here we used $V_j \ge E_*/2 \Longrightarrow rV'_j \le -1/C$).

Consequently, there are real valued symbols $b \in S^{-\infty}_{-\frac 12 +\delta}$ and $a_0 \in S^{-\infty}_{-1+2\delta}$ such that
\begin{equation}\label{e:bsym}
  b^2 = \{q^2, \re p_j\}  + a_0,
\end{equation}
 and such that $\supp a_0 \cap F_j = \varnothing$ and $b \ge c r^{-\frac 12 -\delta} >0$ near $F_j$; for example we can take $b := \{ q^2, \re p_j\}^{1/2} \chi_b$ for some $\chi_b \in S^{-\infty}_0$ with $\chi_b = 1$ near $F_j$ and supported in the set where \eqref{e:pbpq} holds. Note that $q$ depends on $\delta$, and $b$ and $a_0$ depend on $\delta$ and $j$, although our notation does not reflect this.

Using \eqref{e:bsym}, \eqref{e:compform}, and \eqref{e:adform}, we can write
\[
 B^*B = \frac i h [Q^*Q, \re P_j] + A_0 + h A_1, 
\]
for some $a_1\in S^{-\infty}_{-2+2\delta} $, giving
\[
 \|Bu\|^2 = \frac i h \langle  [Q^*Q, \re P_j] u, u \rangle + \langle 
 A_0 u, u \rangle + h \langle A_1u, u \rangle,
\]
Combining this with \eqref{e:ellippd} and the similar elliptic estimate
\begin{equation}\label{e:ellipb}
 \| B'u\| \le C \| Bu\| + h ^N\|Z_Nu\|,
\end{equation}
which holds for all $b' \in S^{-\infty}_{-\frac 12 - \delta}$ which is supported in a small enough neighborhood of $F_j$ and for suitable $z_N \in S^{-\infty}_{-\frac 12 - \delta - N}$, 
 we have (since $\delta<1/4$),
\[
 \|(1 + r_+)^{-\frac 12 - \delta}u\|^2 \le C \frac i h \langle  [Q^*Q, \re P_j] u, u \rangle + C \|(P_j - E_j- i \varepsilon)u\|^2.
\]
Next
\[
 i \langle  [Q^*Q, \re P_j] u, u \rangle = - 2 \im \langle Q 
(P_j - E_j  - i \varepsilon)u, Q u \rangle - 2 \re \langle Q (W_e(r)  + \varepsilon)u, Q  u\rangle, 
\]
giving
\[
 \|(1 + r_+)^{-\frac 12 - \delta}u\|^2 \le \frac {C}{h^2} \|(1 + r_+)^{\frac 12 + 3 \delta}(P_j - E_j - i \varepsilon)u\|^2 - \frac C h  
\re \langle Q (W_e(r)   + \varepsilon)u, Q   u\rangle.
\]
But
\[
  - \re \langle Q (W_e(r)  + \varepsilon)u, Q   u\rangle \le |  \re \langle Q ^*[Q ,W_e(r)]u, u \rangle|,
\]
thanks to  $W_e  + \varepsilon \ge 0$, and
 by \eqref{e:compform} and \eqref{e:adform} we have $\re Q^*[Q,W_e(r)] = h^2 a_2$ for some $a_2 \in S^{-\infty}_{-\infty}$, giving
\[
 |  \re \langle Q ^*[Q ,W_e(r)]u, u \rangle| =  h^2 
\langle A_2 u, u \rangle.
\]
This proves \eqref{e:resestjsmallh22} with $s = \frac 12 + 3 \delta$, and taking $\delta>0$ small enough proves it for all $s>1/2$.
\end{proof}

\begin{proof}[Proof of \eqref{e:propsing2}]
Let 
\[
u := (P_j - E_j- i \varepsilon)^{-1} \chi_+(r) \psi(hD_r) v,
\]
 with $\|v\| = 1$, and fix $\delta \in (0,1/4)$. We will use the following argument by induction to prove \eqref{e:propsing2}.

The inductive hypothesis is that for a given $k \in \mathbb R$ there is a neighborhood $U$ of $F_j \setminus (3,\infty)\times(0,\infty)$ such that $\|Au\| \le C h^k$ for any $a\in S^{-\infty}_{k+\frac 12 - \delta}$ which is supported in $U$. 

The inductive step is that there is a (smaller) neighborhood $U'$ of  $F_j \setminus  (3,\infty)\times(0,\infty)$ such that 
\begin{equation}\label{e:uhk}
\|A' u\| \le C h^{k+1/2},
\end{equation}
for any $a'\in S^{-\infty}_{k +1 + \delta}$  which is supported in $U'$. 

Let us see first that \eqref{e:uhk} for arbitrary $k$ implies \eqref{e:propsing2}. Indeed, by the elliptic estimate \eqref{e:ellippd}, the composition formula \eqref{e:compform}, and the resolvent estimate \eqref{e:resestjsmallh22}, we see that 
\begin{equation}\label{e:a''}
\| A'' u\| \le C_N h^N
\end{equation}
for any $N \in \mathbb R$ and $a'' \in S^\infty_{-\infty}$ such that $\supp a'' \subset (0,3) \times \mathbb R$ and $\supp a'' \cap F_j = \varnothing$. Then we can write
\[
 \chi_-(r)u = \chi_-(r)\varphi_F(hD_r)u + \chi_-(r)(1-\varphi_F(hD_r))u
\]
for $\varphi_F \in C_c^\infty(\mathbb R)$ chosen such that \eqref{e:uhk} applies to the first term on the right and \eqref{e:a''} applies to the second. 

We remark in passing that elaborating this argument we can actually show that $u$ is semiclassically trivial everywhere away from the union of two sets (including uniformly as $|r| \to \infty$ and $|\rho| \to \infty$): the first is $\supp \chi_+ \times \supp \psi$, and the second is $F_j \cap  (3,\infty)\times(0,\infty)$ which we can think of as a neighborhood of the forward bicharacteristic flowout of the first. Here we are focusing on a more concrete and narrower version of this conclusion which is sufficient for our purposes.

Next observe that the base case (the inductive hypothesis with $k=-1$ and $U = T^*\mathbb R$) follows from the resolvent estimate \eqref{e:resestjsmallh22}. 

It remains to prove \eqref{e:uhk} under the inductive hypothesis. Roughly speaking, we use an escape function which on  $F_j \setminus  (3,\infty)\times(0,\infty)$ agrees with the one used in the 
proof of \eqref{e:resestjsmallh22} above, but is adapted to vanish  near $\supp \chi_+ 
\times \supp \psi$ and $F_j \setminus U$. (Note that $F_j \setminus U = \varnothing$ when $k=-1$ but that for $k>-1$ we expect $F_j \setminus U \ne \varnothing$ in general).

More specifically, to define the escape function, fix $\chi_k, \psi_k \in C^\infty(\mathbb R)$  nondecreasing, and satisfying $\chi_k = 0$ near $(-\infty,3]$, $\psi_k = 0$ near $(-\infty, 0]$, $\psi_k = 1$ near $[\sqrt{E_*/3},\infty)$, and $\chi_k(r)\psi_k(\rho) = 1$ near $F_j \setminus U$.
Then let
\[
 q_k(r,\rho) := \tilde q_{k+\frac 32 - \delta}(-r\rho)\chi_q(r,\rho)(1 - \chi_k(r)\psi_k(\rho)),
\]
where $\tilde q_{k+\frac 32 - \delta}$ and $\chi_q$ are as in \eqref{e:qdef}, so that $ q_k \in S^{-\infty}_{k+\frac 32 - \delta}$. Calculating as in \eqref{e:pbpq}, we see that near $F_j$ we have
\[
 \{\re p_{j}, q_k^2\} \le 0 ,
\]
and near $F_j \setminus  (3,\infty)\times(0,\infty)$ we have $\chi_k (r)\psi_k(\rho) = 0$ and hence $ \{\re p_{j}, q_k^2\} \le -c r^{2k+2 - 2 \delta} < 0$  (this is slightly better than \eqref{e:pbpq} because outside of a compact set we have $\rho<0$ on $F_j \setminus  (3,\infty)\times(0,\infty)$ and in particular we are staying away from the outgoing part of the energy surface).

Consequently, as before, we can write
\[
 b_k^2 =  \{ q_k^2, \re p_j\} + a_{0,k},
\]
where $ b_k \in S^{-\infty}_{k+1-\delta},\ a_{0,k} \in S^{-\infty}_{2k+2 - 2 \delta}$, $\supp b_k\ \subset \supp q_k$, $\supp a_{0,k} \cap (F_j \cup \supp \chi_+ 
\times \supp \psi) = \varnothing$, and $b_k \ge cr^{k+1-\delta} >0$ near  $F_j \setminus  (3,\infty)\times(0,\infty)$. Hence
\[
 B_k^*B_k = \frac i h [Q_k^*Q_k, \re P_j] + A_{0,k} + h A_{1,k},
\]
for some $a_{1,k} \in  S^{-\infty}_{2k+1 - 2 \delta}$. We refine this by using \eqref{e:compform} and \eqref{e:adform} to expand $a_{1,k}$ in powers of $h$ up to $h^N$ in terms of $b_k$, $q_k$, $p_j$, $a_{0,k}$, and their derivatives, which gives
\[
 B_k^*B_k = \frac i h [Q_k^*Q_k, \re P_j] + A_{0,k} + h A'_{1,k} + h^N Z_N,
\]
where $a'_{1,k}\in  S^{-\infty}_{2k+1 - 2 \delta}$ has $\supp a'_{1,k} \subset \supp q_k$ and $z_N \in S^{-\infty}_{2k+2 - 2 \delta - N}$. Consequently
\[
 \|B_ku\|^2 = \frac i h \langle  [Q_k^*Q_k, \re P_j] u, u \rangle + \langle 
 A_{0,k}u, u \rangle + h \langle A_{1,k}' u, u \rangle + h^N\langle Z_N u, u \rangle.
\]
 By the elliptic estimate \eqref{e:ellipb} with $b_k$ in place of $b$  we see that to deduce \eqref{e:uhk} it is enough to show
\begin{equation}\label{e:bkbd}
  \|B_ku\|^2 \le C h^{2k+1}.
\end{equation}
Now $\langle A_{0,k} u, u \rangle = O(h^\infty)$ by \eqref{e:ellippd}. Also, since $q_k$ vanishes near $F_j \setminus U$, it follows that $a_{1,k}'$ vanishes near $F_j \setminus U$, so by \eqref{e:ellippd}, \eqref{e:compform}, and the inductive hypothesis, we have
\[
 |\langle A_{1,k}' u, u \rangle| \le C h^{2k}.
\]
Hence to show \eqref{e:bkbd} it suffices to show that
\begin{equation}\label{e:qqp}
 i \langle  [Q_k^*Q_k, \re P_j] u, u \rangle \le  Ch^{2k+2} .
\end{equation}

As before we write, for any $N \in \mathbb R$,
\[\begin{split}
 i \langle  [Q_k^*Q_k, \re P_j] u, u \rangle &= - 2 \im \langle Q_k 
(P_j - E_j  - i \varepsilon)u, Q_ku \rangle - 2 \re \langle Q_k( W_e(r)   
+ \varepsilon)u, Q_k  u\rangle \\
& \le 2 |\re \langle Q_k^*[Q_k,W_e(r)]u,u\rangle| + O(h^\infty),
\end{split}\]
where we used $\supp q_k \cap \supp \chi_+ \times \supp \psi = \varnothing$. Now \eqref{e:qqp} follows from the inductive hypothesis together with the fact that (arguing as in the construction of $a_{1,k}'$ above) $\re Q_k^*[Q_k,W_e(r)] = h^2 A_{2.k} + h^N Z_N$, with $a_{2,k}, \ z_N \in S^{-\infty}_{-\infty}$, and $\supp a_{2,k} \cap F_j \subset U$.
\end{proof}

\subsection{Proof of Theorems \ref{t:cont} and \ref{t:away}}\label{s:gluing}

In this section all operator norms are $L^2(X) \to L^2(X)$. We implement the outline discussed in \S\ref{s:outline}. We assume without loss of generality that $\varepsilon \in (0,1]$, as the statements with $\varepsilon >1$ follow from self-adjointness and the statements with $\varepsilon<0$ then follow by taking the adjoint.

 We first explain the key dynamical property of the bicharacteristic flow in $X_e$ which allows us to remove the remainders in the parametrix construction.

Let us denote points in $T^*X_e$ by $(r,y,\rho,\eta)$, where $y \in Y$, $\rho$ is dual to $r$, and $\eta$ is dual to $y$. The \textit{energy surface} for $P$ in $T^*X_e$ at energy $E_0$ is the subset of $T^*X_e$ defined by
\[
 p(r,y,\rho,\eta):= \rho^2 + |\eta|^2 f(r)^{-4/(d-1)} + V_L(r) = E_0,
\]
and \textit{bicharacteristics}  in $T^*X_e$ of this energy surface are solutions  $\gamma(t):=((r(t),y(t),\rho(t),\eta(t))$ to the Hamiltonian equation of motion $\dot \gamma(t) := \frac d {dt} \gamma(t) = \{p,\gamma(t)\}$. The \textit{backward bicharacteristic flowout}  in $T^*X_e$ of a point $\gamma_0 \in T^*X_e$ is the set of points $\gamma' \in T^*X_e$ such that if $\gamma(t)$ is the bicharacteristic  in $T^*X_e$ with $\gamma(0) = \gamma_0$, then $\gamma(t) = \gamma'$ for some $t \le 0$; note that some bicharacteristics  enter $T^*(X\setminus X_e)$ in finite time, and our definition only counts them while they stay in $T^*X_e$.

If $\gamma(t):=((r(t),y(t),\rho(t),\eta(t))$  is a bicharacteristic, then
\begin{equation}\label{e:rdot}
 \dot r(t) = 2 \rho(t), \qquad \dot \rho(t) =  \frac 4{d-1} |\eta|^2 f'(r(t))f(r(t))^{-(d+3)/(d-1)} - V_L'(r(t)) \ge 0,
\end{equation}
and hence $\ddot r = 2 \dot \rho \ge 0$. Consequently no bicharacteristic can visit the sets $T^*((0,4))$, $T^*((4,5))$, and $T^*((2,3))$ in that order (here and below $T^*((a,b))$ denotes the subset of $T^*X_e$ on which $a<r<b$), and this fact is exploited to prove the crucial remainder estimate in    \eqref{e:acak} below.

 Fix $\chi_e, \chi_K \in C^\infty(\mathbb R)$ such that $\chi_e + \chi_K = 1$, 
$\supp \chi_e \subset (3,\infty)$, and $\supp \chi_K \subset (-\infty,4)$. Define a parametrix for $P - E - i \varepsilon$ by
\[
G:= \chi_K(r-1)R_K\chi_K(r) + \chi_e(r+1)R_e \chi_e(r).
\]
Here
\[
 R_K = R_K(E_0+ i \varepsilon):= (-h^2\Delta - i W_K(r) - E_0 - i \varepsilon)^{-1},
\]
and
\[
R_e = R_e(E_0 + i \varepsilon):= f(r) \sum_{j=0}^\infty\left ((P_j - i \varepsilon)^{-1} \phi_j \otimes \phi_j \right)
f(r)^{-1},
\]
and
\begin{equation}\label{e:rcrkbd}
 \|R_K\| \le C a(h)h^{-1}, \qquad \|(1+r)^{-s_1}\chi_e(r+1)R_e \chi_e(r)(1+r)^{-s_2}\| \le C h^{-2}.
\end{equation}
Indeed, $R_K$ is well defined and obeys \eqref{e:rcrkbd} thanks to 
 \eqref{e:w}; this follows from the resolvent identity for 
$\varepsilon >0$ small enough and then from the bound $\im (-h^2 \Delta - i W_K(r) 
- E_0 - i \varepsilon) \le - \varepsilon$ for all $\varepsilon>0$. Meanwhile   $ \chi_e(r+1)R_e \chi_e(r)$ acts on $L^2(X)$ thanks to \eqref{e:pccagree} and the support property of $\chi_e$, even though $R_e$ acts on a funny space due to the way we defined the operators $P_j$ differently depending on $j$; moreover
$R_e$ obeys \eqref{e:rcrkbd} by \eqref{e:resestjbig} and 
\eqref{e:resestjsmallh22}.
  
Define operators $A_K$ and $A_e$ by
\[
(P - E_0 - i \varepsilon)G = I + [h^2 D_r^2, \chi_K(r-1)]R_K\chi_K(r) +  [h^2 
D_r^2, \chi_e(r+1)]R_e\chi_e(r) =: I + A_K + A_e.
\]

Our next step is to remove the remainders $A_K$ and $A_e$. The idea of \cite{dv} is to do this using a \textit{semiclassically outgiong} property of the resolvents $R_K$ and $R_e$. 

To explain this property, we use the following notation: if $U \subset T^*X_e$, then $\Gamma_+U$ is the set of points in $T^*X_e$ whose backward bicharacteristic flowout intersects $U$. Now in the case of $R_K$, the needed semiclassically outgoing property says (in the notation of \eqref{e:scdef} and \eqref{e:quantdef}) that if $\tilde \chi \in C_c^\infty((0,\infty))$ and $a \in S^{0}_l$, then
\begin{equation}\label{e:rkog}
 \|\tilde \chi(r)\Op_h(a) A_K\| = O(h^\infty),
\end{equation}
provided $|\partial_r^{n_1}\partial_\rho^{n_2} a(r,\rho)| = O(h^\infty)$ for every $n_1, \ n_2 \in \mathbb N_0$ and for every $(r,\rho) \in T^*((0,4)) \cup \Gamma_+T^*((0,4))$. 
This property follows from \cite[Lemma 5.1]{dv}. 

On the other hand, the resolvent $R_e$ is only semiclassically outgoing for $j$ such that $E_j \ge c >0$ (the relevant statement for us is \eqref{e:propsing2}); as $E_j \to0$ this property fails, but then the gluing region (the part of $X$ such that $r \in (2,5)$) becomes classically forbidden, and so we will be able to estimate and remove remainders using the Agmon estimates of \S\ref{s:jbig}.

More specifically, we observe that
\begin{equation}\label{e:akacbd}
 \|A_K\| \le C(1+ a(h)), \qquad  \|A_e(1+r)^{-s_2}\| \le C.
\end{equation}
Indeed, $A_K$ obeys the bound thanks to the corresponding bound on $R_K$ in \eqref{e:rcrkbd}; note that $\|R_K\|_{L^2 \to H^2_h(X)} \le C \|R_K\|$ since $V$, $W$, and $\varepsilon$ are bounded, and $E_0$ is fixed. Meanwhile $A_e$ obeys the bound by \eqref{e:agmonoverlap} and \eqref{e:resestjsmallh22}. 

We refine the parametrix with some correction terms, observing that $A_K^2 = 
A_e^2 = 0$:
\[
(P - E_0 - i \varepsilon)G(I - A_K - A_e + A_KA_e) = I  - A_eA_K + A_eA_KA_e.
\]
We will show that
\begin{equation}\label{e:acak}
\|A_eA_K\| = O(h^\infty).
\end{equation}

Assuming \eqref{e:acak} for the moment, we may  write (using $R_e \chi_e(r) A_e = R_K \chi_K(r) A_K = 0$)
\begin{equation}\label{e:pinv}\begin{split}
(P - E_0 -& i \varepsilon)^{-1} = G(I - A_K - A_e + A_KA_e)(I  - A_eA_K + A_eA_KA_e)^{-1} \\
  =&\chi_e(r+1)R_e \chi_e(r) + \chi_K(r-1)R_K\chi_K(r)  - \chi_e(r+1)R_e A_K \\ &- \chi_K(r-1)R_KA_e  + \chi_e(r+1)R_e A_KA_e + O(h^\infty).
\end{split}\end{equation}
Note that by by \eqref{e:agmonoverlap},  \eqref{e:resestjsmallh22}, and the bound on $\|R_K\|$ in \eqref{e:rcrkbd}, we have
\begin{equation}\label{e:rcak}
 \|(1+r)^{-s_1}\chi_e(r+1)R_eA_K\| \le C a(h)h^{-1}.
\end{equation}
Now multiplying \eqref{e:pinv} on the left by $(1+r)^{-s_1}$ and on the right by $(1+r)^{-s_2}$ and estimating the norm on the right term by term, we see that by \eqref{e:rcrkbd} the first term on the right has norm bounded by $Ch^{-2}$, while by \eqref{e:rcrkbd},  \eqref{e:akacbd}, and \eqref{e:rcak}, the next four terms have norm bounded by $Ca(h)h^{-1}$. This implies \eqref{e:2n-1}.

We similarly deduce \eqref{e:taway} from \eqref{e:pinv}, but rather than using the bound on $R_e$ in \eqref{e:rcrkbd}, we use
\begin{equation}\label{e:rechij}
\|(1+r)^{-s} \chi_e(r+1) \chi_{\mathcal J} R_e \chi_e(r)(1+r)^{-s}\|\le Ch^{-1}.
\end{equation}
To prove \eqref{e:rechij}, we use \eqref{e:resestjbigchi} when $E_j \in [-c_{\mathcal J} h, c_{\mathcal J}]$, we use \eqref{e:resestjsmallh22} when $E_j \ge c_{\mathcal J}$, and we use the fact that $P_j$ is almost nonnegative (more precisely, $P_j \ge - Ch^2$ by \eqref{e:pccagree} and \eqref{e:pjbigdef}) when $E_j \le - c_{\mathcal J} h$.

To complete the proofs of Theorems \ref{t:cont} and \ref{t:away}, it remains to show \eqref{e:acak}. We have
\[
A_eA_K =  [\chi_e(r+1),h^2 D_r^2]R_e [\chi_K(r-1),h^2 D_r^2]R_K\chi_K(r). 
\]
Fix $\tilde \chi \in C_c^\infty((3,6))$ which is $1$ on $[4,5]$, so that
\[
A_eA_K =  [\chi_e(r+1),h^2 D_r^2] R_e \tilde \chi(r) [\chi_K(r-1),h^2 
D_r^2]R_K\chi_K(r). 
\]
For any $\psi \in C_c^\infty((0,\infty))$ we have
\[
\| [\chi_e(r+1),h^2 D_r^2]R_e  \tilde \chi (r) \psi(hD_r) [\chi_K(r-1),h^2 
D_r^2] \| = O(h^\infty),
\]
by \eqref{e:agmondisjoint} and \eqref{e:propsing2}, so it remains to show that there is $\psi \in C_c^\infty((0,\infty))$ such that
\[
 \| \tilde \chi(r) (I - \psi(hD_r)) [\chi_K(r-1),h^2 D_r^2]R_K\chi_K(r)\| = 
O(h^\infty).
\]
We will deduce this from \eqref{e:rkog}. Indeed, it is enough to check that there is $\rho_0>0$ such that if $\gamma(t)$ is a bicharacteristic at energy $E_0$
with $\gamma(0) \in T^*\supp\chi_K(r)$ and with $\gamma(T) \in T^*\supp \chi_K'(r-1)$ for some $T>0$, then $\rho(T)\ge \rho_0$ (we already know that $\rho(T)^2 \le E_0$, so we may then take $\psi$ to be $1$ near $[\rho_0,\sqrt{E_0}]$).

 Thanks to \eqref{e:rdot} we know that $\rho(t)$ is nondecreasing, so we may assume that $\max \supp \chi_K(r) <r(t) <\min\supp \chi'_K(r-1)$ when $t \in (0,T)$, which implies in particular $\rho(0) \ge 0$. Then, for $t \in (0,T)$, we have $f(r(t)) \le C f'(r(t))$ and $V_L(r(t)) \le  - C V'_L(r(t))$, so that
\[
 \dot \rho(t) \ge   (|\eta|^2 f(r)^{-4/(d-1)} + V_L(r))/C_0 = (E_0 -\rho(t)^2)/C_0.
\]
If $\rho(0) = \sqrt{E_0}$, then $\rho(T) = \sqrt{E_0}$  and we are done; otherwise we can integrate and use $\rho(0) \ge 0$ to obtain
\[
\frac {C_0}{\sqrt {E_0}} \tanh^{-1}\left(\frac{\rho(T)}{\sqrt{E_0}}\right) \ge  T =  \frac{r(T) - r(0)}{2\bar \rho} \ge  \frac{r(T) - r(0)}{2 \rho(T)},
\]
where we used $\bar \rho: = T^{-1}\int_0^T \rho(t)dt \le \rho(T)$. This implies $\rho(T) \ge \rho_0$, for some $\rho_0>0$ depending on $C_0, \ E_0,$ and $\chi_K$.

\section{Continuation of the resolvent}\label{s:continuation}

In this section we keep all of the assumptions of \S\ref{s:assump}, and add the assumption that 
\[
 r \ge 6 \Longrightarrow V_L(r) = f(r) - 1 = 0.
\]

In \S\ref{s:mero} we  briefly review how meromorphic continuation works in this setting, following \cite{gui} and \cite[\S6.7]{mel}, and introduce the relevant notation. In \S\ref{s:modelcyl} we prove some useful estimates for a model problem on the cylindrical end. In \S\ref{s:resfree} we use an identity of Vodev from \cite{v} to deduce the existence of a resonance free region.

Roughly speaking, writing $R(z)$ for the resolvent $(P-z)^{-1}$ and for its meromorphic continuation, we deduce from \eqref{e:2n-1} that
\[
 \|\chi R(E_0\pm i0)\chi\| \lesssim 1/\mu(h),
\]
where $\chi \in C_c^\infty(X)$ and $0<\mu(h)\le h^2$. Then we use Vodev's identity to show that this implies
\[
 \|\chi R(z)\chi\| \lesssim 1/\mu(h),
\]
as long as the distance from $z$ to $E_0 \pm i0$ is small compared to $\mu(h)$. However some care is needed due to the complicated nature of the Riemann surface to which $R(z)$ continues (see \S\ref{s:mero}), and due to the fact that our model resolvent obeys somewhat weaker bounds than the one used in \cite{v} (see \S\ref{s:modelcyl}). The precise statement and proof are in \S\ref{s:resfree}.

Although we keep all of the assumptions of \S\ref{s:assump} in this section, strictly speaking they are not all needed once we have \eqref{e:2n-1}. Instead, as long as we had \eqref{e:2n-1}, we could allow $X$ to be a more general manifold with cylindrical ends, or allow $P$ to be a black-box perturbation of the Laplacian e.g. in the sense of \cite[\S2]{cd}. The proof could also be adapted to include the case of waveguides. We omit these generalizations here, to simplify the presentation and because all of our interesting examples satisfy the assumptions of \S\ref{s:assump}.

\subsection{Meromorphic continuation of the resolvent}\label{s:mero} In \S\ref{s:mero} we think of $h>0$ as being fixed, until Lemma \ref{l:projdiff}, in which we prove an estimate which is uniform as $h \to 0$.

The spectrum of $P$ is given by $[0,\infty)$ together with a finite (possibly empty) set of negative eigenvalues. For $z$ not in the spectrum we define the resolvent
\[
 R(z):=(P-z)^{-1} \colon L^2(X) \to L^2(X).
\]
To define the Riemann surface onto which $R(z)$ meromorphically continues, for each $j \in \mathbb N_0$, and $z \in \mathbb C \setminus [h^2\sigma_j^2,\infty)$, we introduce the notation
\[
 \rho_j(z) := \sqrt{z-h^2\sigma_j^2},
\]
with the branch of the square root chosen such that $\im \rho_j(z)>0$ for this range of $z$ (recall that $0=\sigma_0 \le \sigma_1 \le \cdots$ are the square roots of the eigenvalues of the nonnegative Laplacian on $(Y,g_Y)$ included according to multiplicity). 

For each  $j \in \mathbb N_0$, there is a minimal Riemann surface $\hat Z_{h,j}$ onto which $\rho_j$ continues analytically from $\mathbb C \setminus [h^2\sigma_j^2,\infty)$; this 
is a double cover of $\mathbb C$ ramified at the singular point $z = h^2\sigma_j^2$. By elaborating the construction of $\hat Z_{h,j}$, we see that  there is a minimal Riemann surface $\hat Z_h$ onto which all the $\rho_j$ extend simultaneously from $\mathbb C \setminus [0,\infty)$. This is a countable cover of $\mathbb C$, ramified at $z=h^2\sigma_j^2$ for each $j$, and for each  $z \in \hat Z_h$ we have $\im \rho_j(z)>0$ for all but finitely many $j$. For more details, see \cite{gui} and \cite[\S 6.7]{mel}.

We use $\pr$ to denote the projection $\hat Z_h \to \mathbb C$, we use the term \textit{physical region} to refer to the sheet over $\mathbb C \setminus [0,\infty)$ on which $\im \rho_j>0$ for all $j$, and for notational convenience we identify the physical region with $\mathbb C \setminus [0,\infty)$. Then $R(z)$ continues meromorphically from the resolvent set in $\mathbb C \setminus [0,\infty)$ to all of $\hat Z_h$, as an operator from compactly supported $L^2$ functions to locally $L^2$ functions, and we have $(P-\pr(z))R(z) = I$. We refer to the poles of $R(z)$ as \textit{resonances}.

For $E \ge 0$, we denote by $E\pm i0$ the points in $\hat Z_h$ on the boundary of the physical region which 
are obtained as limits $\lim_{\pm \delta \downarrow 0} E+i\delta$.  Note that $\rho_j(E\pm i0) \in i\Real_+$ if $E<h^2 \sigma_j^2$, and $\pm \rho_j(E\pm i0)>0$ if $h^2\sigma^2_j<E$. Below we will only be concerned with points on $\hat Z_h$ which are quite close to the boundary of the physical region.
To measure  how far apart two points on $\zhath$ are we use the following
\begin{lem} The function $d_h:\zhath\times \zhath\rightarrow [0,\infty]$ given by
\begin{equation}\label{e:dhdef}
d_h(z,z'):=\sup_j |\rho_j(z)-\rho_j(z')|
\end{equation}
takes only finite values and is a metric on $\zhath$.
\end{lem}

\begin{proof} To see that $|\rho_j(z)-\rho_j(z')|$ is bounded in $j$, note that 
\begin{equation}\label{eq:projdif}
\pr(z)-\pr(z')=\rho^2_j(z)-\rho^2_j(z')=(\rho_j(z)-\rho_j(z'))(\rho_j(z)+\rho_j(z')).
\end{equation}
Using that $\rho_j^2(z)= \pr(z)-h^2\sigma_j^2$, we find
 $\re \rho_j^2(z) \rightarrow -\infty$ as $j\rightarrow \infty$.  Since $\im \rho_j(z)>0$ if $j$ is sufficiently large, $\im \rho_j(z)\rightarrow \infty$ as $j\rightarrow \infty$ and we find, since the same is true for 
$z'$, that  for $j$ large enough $|\rho_j(z)-\rho_j(z')|< |\rho_j(z)+ \rho_j(z')|$.  Since by 
(\ref{eq:projdif}), we have 
\[
\min\{|\rho_j(z)-\rho_j(z')|,|\rho_j(z)+\rho_j(z')|\} \le | \pr(z)-\pr(z')|^{1/2},
\]
 we have for $j$ sufficiently large, $|\rho_j(z)-\rho_j(z')|\leq |\pr(z)-\pr(z')|^{1/2}$.

That $d_h$ is a metric is fairly straightforward;  for completeness we check  the triangle inequality.
Let $z,\; z', w\in \zhath$.  Then
$$|\rho_j(z)-\rho_j(z') | \leq |\rho_j(z)-\rho_j(w)|+|\rho_j(w)-\rho_j(z')|.$$
But then
\begin{align*} 
d_h(z,z')& =\sup_j |\rho_j(z)-\rho_j(z') | \leq \sup_j (|\rho_j(z)-\rho_j(w)|+|\rho_j(w)-\rho_j(z')| )\\ & 
\leq \sup_j  |\rho_j(z)-\rho_j(w)|+ \sup_j|\rho_j(w)-\rho_j(z')| = d_h(z,w)+d_h(w,z').
\end{align*}
\end{proof}
Later we will want to use $d_h(z,z')$ in a resolvent identity, and now we show that $d_h(z,z')$ controls $|\pr(z)-\pr(z')|$, at
least when $z'$  is on the boundary of the physical region:
\begin{lem} \label{l:projdiff} Let $E>0$, and let $E\pm i0$ denote one of the points on the boundary of the physical space in 
$\zhath$  as described above.  Then for any $\delta>0$, if $h>0$ is sufficiently small,
$$|\pr(z)-E | \leq d_h(z,E\pm i0)[ d_h(z,E\pm i0)+O(h^{1/2-\delta})]$$
for $z\in \zhath$.
\end{lem}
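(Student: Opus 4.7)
The plan is to reduce the bound to a statement about how well an eigenvalue $\sigma_j^2$ can approximate $E/h^2$, and then supply such an approximation via Weyl's law. I would begin from the algebraic identity
\[
\pr(z) - E \;=\; \rho_j(z)^2 - \rho_j(E \pm i0)^2 \;=\; \bigl(\rho_j(z) - \rho_j(E \pm i0)\bigr)\bigl(\rho_j(z) + \rho_j(E \pm i0)\bigr),
\]
valid for every $j$ because $\rho_j(w)^2 = \pr(w) - h^2 \sigma_j^2$. The first factor is bounded by $d_h(z, E \pm i0)$ directly from the definition of $d_h$. Writing the second factor as $(\rho_j(z) - \rho_j(E \pm i0)) + 2 \rho_j(E \pm i0)$ and applying the triangle inequality gives, for \emph{every} $j$,
\[
|\pr(z) - E| \;\le\; d_h(z, E \pm i0)\,\bigl[\,d_h(z, E \pm i0) + 2\,|\rho_j(E \pm i0)|\,\bigr].
\]
A case check (depending on whether $h^2 \sigma_j^2$ lies above or below $E$, hence on which side of the branch cut of $\rho_j$ the limit $E \pm i0$ sits) shows $|\rho_j(E \pm i0)|^2 = |E - h^2 \sigma_j^2|$ in both cases. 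The lemma therefore reduces to: given $\delta > 0$, for all sufficiently small $h$ there is some $j = j(h,\delta)$ with $|E - h^2 \sigma_j^2| \le h^{1 - 2\delta}$.

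The existence of such a $j$ I would extract from Weyl's law with classical sharp remainder on the compact $(d-1)$-dimensional manifold $(Y, g_Y)$:
\[
N(\lambda) \;:=\; \#\{j : \sigma_j^2 \le \lambda\} \;=\; C_Y \lambda^{(d-1)/2} + O\bigl(\lambda^{(d-2)/2}\bigr), \qquad \lambda \to \infty.
\]
Setting $\lambda_\pm := E/h^2 \pm h^{-1 - 2\delta}$ and Taylor-expanding the leading term of $N(\lambda_+) - N(\lambda_-)$ gives a main contribution of order $E^{(d-3)/2} h^{2 - d - 2\delta}$, with Weyl remainder of order $h^{2 - d}$. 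Since $2 - d - 2\delta < 2 - d$ for every $\delta > 0$, the main term dominates the remainder as $h \to 0$, and since $2 - d - 2\delta < 0$ for $d \ge 2$, it tends to $+\infty$. Hence for $h$ small the interval $[\lambda_-, \lambda_+]$ contains at least one $\sigma_j^2$, which is the required $j$; combining with the first display completes the proof.

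The one nontrivial ingredient is the sharpness of the Weyl remainder: a naive leading-order asymptotic with error merely $o(\lambda^{(d-1)/2})$ would not suffice, since the main term of $N(\lambda_+) - N(\lambda_-)$ is smaller than $\lambda^{(d-1)/2}$ by a factor of order $h^{1 - 2\delta}$. The classical $O(\lambda^{(d-2)/2})$ bound (Avakumovi\'c--Levitan--H\"ormander) is exactly what makes the argument work with an arbitrarily small positive loss~$\delta$; this is the sole subtlety. The factorization identity, the triangle inequality, and the branch-cut identification of $|\rho_j(E \pm i0)|^2$ are all routine.
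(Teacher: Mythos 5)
Your proof follows essentially the same route as the paper: the factorization $\pr(z)-E = (\rho_j(z)-\rho_j(E\pm i0))(\rho_j(z)+\rho_j(E\pm i0))$, the triangle inequality, the identification $|\rho_j(E\pm i0)|^2 = |E - h^2\sigma_j^2|$, and then Weyl's law to locate a $\sigma_{j_0}^2$ within distance $h^{-1-2\delta}$ of $Eh^{-2}$. The one place you go further than the paper is the Weyl step: the paper simply asserts that such a $\sigma_{j_0}^2$ exists ``by the Weyl law,'' while you correctly observe that the leading-order statement alone does not suffice and that one needs the sharp Avakumovi\'c--Levitan--H\"ormander remainder $O(\lambda^{(d-2)/2})$ for the counting-function difference $N(\lambda_+)-N(\lambda_-)$ to be forced positive. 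That is a worthwhile clarification of a step the paper leaves implicit; otherwise the two arguments coincide.
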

\begin{proof}
We have, for any $j\in \Natural$,
\begin{align}\label{eq:projectionbd}
|\pr(z)- E |& = |\rho_j^2(z)-\rho_j^2(E\pm i0)| \nonumber \\ &
= |\rho_j(z)-\rho_j(E\pm i0)) | |\rho_j(z)- \rho_j(E\pm i0)+ 2\rho_j(E\pm i0)| \nonumber \\ &
\leq |\rho_j(z)-\rho_j(E\pm i0)| \left( | \rho_j(z)- \rho_j(E\pm i0)|+ 2|\rho_j(E\pm i0)|\right).
\end{align}
By the Weyl law, for any $\delta'>0$ there is an $h_0=h_0(\delta')>0$ so that 
if $0<h<h_0$, the interval $[Eh^{-2}- h^{-1-\delta'}, Eh^{-2}+ h^{-1-\delta'}] $ contains an element of 
the spectrum of $-\Delta_Y$; call this $\sigma_{j_0}^2$.  
We note that $j_0$ depends on $E$ and on $h$, but our notation does not reflect that dependence.
Then 
$$|\rho_{j_0}(E\pm i0)|^2=|E- h^2\sigma_{j_0}^2|\leq h^{1-\delta'}.$$
Using this in 
(\ref{eq:projectionbd}) with $j=j_0$ proves the lemma, since 
$|\rho_{j_0}(z)-\rho_{j_0}(E\pm i0)|\leq d_h(z,E \pm i0)$.
\end{proof}


\subsection{Resolvent estimates for the model problem on the cylindrical end.}\label{s:modelcyl}
Let $X_0 =[0,\infty)\times Y $,  let $\Delta_0 \le 0$ be the Laplacian on $(X_0, dr^2 + g_Y)$, and for $h>0$ and $z \in \mathbb C \setminus [0,\infty)$, let
\[
 R_0(z) := (-h^2\Delta_0 - z)^{-1}
\]
denote the semiclassical Dirichlet resolvent. 

For $\im \xi>0$, let $\tR(\xi)$ be the resolvent for the Dirichlet Laplacian on the half-line with spectral parameter $\xi^2$ and  Schwartz kernel given by 
\begin{equation}\label{e:resdhalf}
\tR(\xi,r,r') = \frac{i}{2h \xi} (e^{i\xi|r-r'|/h}- e^{i\xi(r+r')/h}).
\end{equation}
Then, for $z$ in the physical region of $\hat Z_h$ (see \S\ref{s:mero}), we have
\begin{equation}\label{eq:R0}
R_0(z)= \sum_{j=0}^\infty  \tR(\rho_j(z))  \phi_j \otimes\phi_j,
\end{equation}
where  $\{ \phi_j\}_{j=0}^\infty$ is a complete set of real-valued orthonormal eigenfunctions of the Laplacian on $Y$ and $-\Delta_Y \phi_j=\sigma_j^2 \phi_j$.

Moreover, $R_0(z)$ continues holomorphically to $\hat Z_h$  as an operator from compactly supported $L^2$ functions to locally $L^2$ functions. 
In this section we prove some estimates for $R_0(z)$ which will be needed when we use a resolvent identity to find a neighborhood of the boundary of the physical region in which $R(z)$ has no poles.

\begin{prop}\label{p:trdiffbds}
Let $\chi\in C^\infty_c([0,\infty))$ and fix $N>0$.  If $\im \xi, \ \im \xi' >-Nh$, then
\begin{equation}\label{e:rd1}
\| \chi \tR(\xi)\chi - \chi \tR(\xi')\chi\| \leq  C h^{-3}|\xi -\xi'|.
\end{equation}
If $\im \xi, \ \im \xi' >-Nh$ and $\alpha_1+\alpha_2=1,2$, then 
\begin{equation}\label{e:rd2}
\| \chi h^{\alpha_1}D_r^{\alpha_1} \tR(\xi)h^{\alpha_2}D_r^{\alpha_2}\chi - \chi h^{\alpha_1}D_r^{\alpha_1} \tR(\xi')h^{\alpha_2}D_r^{\alpha_2}\chi\|\leq C h^{-2}|\xi -\xi'| (|\xi|+|\xi'|+1)^{\alpha_1+\alpha_2-1}.
\end{equation}
Fix $\delta>0$ and suppose $\delta<\arg \xi,\arg \xi'<\pi-\delta$ and $|\xi|, |\xi'|\geq 1$.  Then if $\alpha_1+\alpha_2\leq 2$,
\begin{equation}\label{e:rd3}
\| h^{\alpha_1}D_r^{\alpha_1} \tR(\xi)h^{\alpha_2}D_r^{\alpha_2}\chi - \chi h^{\alpha_1}D_r^{\alpha_1} \tR(\xi')h^{\alpha_2}D_r^{\alpha_2}\|\leq C |\xi -\xi'|.
\end{equation}
All the norms above are $L^2(\mathbb R_+) \to L^2(\mathbb R_+)$, and the constants depend on $\chi$, $N$, and $\delta$.
\end{prop} 

\begin{proof}
We begin with \eqref{e:rd1}.  Note that $\chi\frac{d}{d\xi}\tR(\xi)\chi$ has Schwartz kernel
\[
\frac{i\chi(r)}{2 h^3 (\xi/h)^2} \left( (-1+i|r-r'|\frac{\xi}{h}) e^{i\xi|r-r'|/h}-(-1+i(r+r')\frac{\xi}{h})e^{i\xi(r+r')/h}\right)\chi(r').
\]
With $\im \xi >-Nh$, this can be pointwise bounded by $C/h^3$, even when $\xi\rightarrow 0$, and 
hence  since $\chi$ is compactly supported we have $\| \chi\frac{d}{d\tau}\tR(\tau)\chi\| \leq \frac{C}{h^3}$.
Integrating from $\xi$ to $\xi'$ gives \eqref{e:rd1}.  We note for future reference that if $|\xi|\geq h$, then we can improve the estimate to 
\begin{equation}\begin{split}\label{eq:improvebigxi}
\left|\frac{i\chi(r)}{2 h^3 (\xi/h)^2} \left( \left(-1+i|r-r'|\frac{\xi}{h}\right) e^{i\xi|r-r'|/h}-\left(-1+i(r+r')\frac{\xi}{h}\right)e^{i\xi(r+r')/h}\right)\chi(r')\right|&  \\
\leq C/(h^2 |\xi|), \text{ when } |\xi|&\geq h.
\end{split}\end{equation}

Next consider the operator $h\frac{\partial }{\partial r} \tR(\xi)$.  It has Schwartz kernel
$$  \frac{-1}{2h} \left( \sgn(r-r')e^{i\xi|r-r'|/h}- e^{i\xi(r+r')/h}\right).$$
Differentiating this with respect to $\xi$ and proceeding as above gives
$\left\| \chi\frac{d}{d\tau}h\frac{\partial}{\partial r} \tR(\tau)\chi\right\| \leq \frac{C}{h^2}.$
Integrating in $\tau$ from $\xi$ to $\xi'$ gives \eqref{e:rd2} for $\alpha_1=1$, $\alpha_2=0$.  
To prove \eqref{e:rd2} for $\alpha_1= 2$, $\alpha_2=0,$ we can argue as before using the Schwartz kernel.  Alternately, we can note that 
$h^2\frac{\partial^2}{\partial r^2}\tR(\xi)= I + \xi^2 \tR(\xi)$ and proceed as in the proof of the first inequality, using the improvement (\ref{eq:improvebigxi}). Similar techniques give \eqref{e:rd2} when $\alpha_2 \ne 0$, if we consider the Schwartz kernel of $R_D(\xi)\frac \partial{\partial_r}$.

  When $\xi, \; \xi'$ satisfy $\delta<\arg \xi,\arg \xi'<\pi-\delta$ they are both in the physical region and we can use the resolvent equation
$\tR(\xi)-\tR(\xi')= (\xi^2- \xi'^2 ) \tR(\xi)\tR(\xi')$.    If $|\xi|\geq 1$, using the bound on $\arg \xi$ we have $\| h^{\alpha_1}D_r^{\alpha_1} \tR(\xi)h^{\alpha_2}D_r^{\alpha_2}\|\leq C|\xi|^{\alpha_1+\alpha_2-2}$, where the constant depends on $\delta$.  The same inequality holds if $\xi$ is replaced by $\xi'$ everywhere.  Using this in the resolvent
equation proves~\eqref{e:rd3}.
\end{proof}

\begin{prop} \label{p:resest} Let $E>0$ and consider one of the points
$E\pm i0\in \zhath$ which lies on the boundary of the physical region.
 Fix $N>0$ and  $\chi \in C_c^\infty(X_0)$.  Then
\begin{equation}\label{eq:resest1} \| \chi R_0(z)\chi - \chi R_0(E \pm i0)\chi \| \leq C h^{-3}d_h(z,E \pm i0),
\end{equation}
for all $z \in \zhath$ such that $d_h(z,E\pm i 0)<Nh$.
If $\alpha_1+\alpha_2=1,2$, then instead
\begin{equation}\label{eq:resest2}\| \chi h^{\alpha_1}D_r^{\alpha_1} R_0(z)h^{\alpha_2}D_r^{\alpha_2}\chi - \chi h^{\alpha_1}D_r^{\alpha_1} R_0(E \pm i 0) h^{\alpha_2}D_r^{\alpha_2}\chi\|\leq C h^{-2}
d_h(z,E \pm i0),\end{equation}
for all $z \in \zhath$ such that $d_h(z,E\pm i 0)<Nh$.
\end{prop}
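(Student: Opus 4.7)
The strategy is to reduce both estimates to the one-dimensional bounds of Proposition~\ref{p:trdiffbds} by separating variables on the cylindrical end. First, since $\chi \in C_c^\infty(X_0)$ has compact $r$-support, I would pick $\psi \in C_c^\infty([0,\infty))$ equal to $1$ on the $r$-projection of $\supp\chi$, so that $\chi = \chi\psi = \psi\chi$ as multiplication operators and hence $\|\chi T \chi\| \le \|\chi\|_\infty^2 \|\psi T \psi\|$ for any bounded operator $T$. This reduces the problem to the case where the cutoff is the radial function $\psi(r)$. With a radial cutoff, the mode decomposition \eqref{eq:R0} is diagonal in the basis $\{\phi_j\}$ of $L^2(Y)$, and so is $\psi(hD_r)^{\alpha_1} R_0(z) (hD_r)^{\alpha_2}\psi$. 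The $L^2(X_0) \to L^2(X_0)$ operator norm of the corresponding difference therefore equals
\[
\sup_j \bigl\| \psi(hD_r)^{\alpha_1}\tR(\rho_j(z))(hD_r)^{\alpha_2}\psi - \psi(hD_r)^{\alpha_1}\tR(\rho_j(E\pm i0))(hD_r)^{\alpha_2}\psi \bigr\|_{L^2(\mathbb R_+) \to L^2(\mathbb R_+)},
\]
and the task becomes proving each summand is bounded uniformly in $j$.

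Next, I would verify the hypothesis $\im \rho_j > -Nh$ required by Proposition~\ref{p:trdiffbds}. Since $E \pm i0$ lies on the boundary of the physical region, $\im \rho_j(E \pm i0) \ge 0$ for every $j$, and since $|\rho_j(z) - \rho_j(E \pm i0)| \le d_h(z, E \pm i 0) < Nh$, it follows that $\im \rho_j(z) > -Nh$ too. For \eqref{eq:resest1} this is already enough: the first bound of Proposition~\ref{p:trdiffbds} yields
\[
\| \psi \tR(\rho_j(z)) \psi - \psi \tR(\rho_j(E\pm i0)) \psi\| \le Ch^{-3}|\rho_j(z) - \rho_j(E\pm i0)| \le Ch^{-3} d_h(z, E \pm i 0),
\]
uniformly in $j$, completing the proof of the undifferentiated estimate.

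For the differentiated estimate \eqref{eq:resest2}, the case $\alpha_1 + \alpha_2 = 1$ is handled immediately by the second bound of Proposition~\ref{p:trdiffbds}, since then the accompanying factor $(|\xi| + |\xi'| + 1)^{\alpha_1 + \alpha_2 - 1}$ equals $1$. The main obstacle is the case $\alpha_1 + \alpha_2 = 2$, where this factor grows linearly with $|\rho_j|$ and so a uniform bound in $j$ cannot be obtained from the second bound alone. To circumvent this I would split the modes according to the size of $|\rho_j(E \pm i0)|$. For $j$ with $|\rho_j(E\pm i0)| \le M$, where $M$ is a fixed large constant, $|\rho_j(z)|$ is also bounded and the factor $(|\xi|+|\xi'|+1)$ is controlled, so the second bound of Proposition~\ref{p:trdiffbds} yields $Ch^{-2} d_h(z, E \pm i0)$. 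For $j$ with $|\rho_j(E \pm i0)| > M$, necessarily $h^2 \sigma_j^2 > E$ and $\rho_j(E \pm i0) = i\sqrt{h^2 \sigma_j^2 - E}$ lies on the positive imaginary axis with modulus at least $M$; provided $M$ is large and $h$ is small, $\rho_j(z)$ (within $Nh$ of $\rho_j(E \pm i0)$) also satisfies $|\rho_j(z)| \ge 1$ and $\arg \rho_j(z) \in (\delta, \pi - \delta)$ for some fixed $\delta > 0$. The third bound of Proposition~\ref{p:trdiffbds} then applies and gives the required $Ch^{-2} d_h(z, E \pm i0)$ estimate with no growing factor. Combining the two ranges of $j$ produces \eqref{eq:resest2}.

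The delicate step is thus uniformity in $j$ in the differentiated estimate, and the crucial observation is that the large-$j$ modes, for which the naive bound blows up, are exactly those for which the spectral parameter $\rho_j$ lies in the interior of the physical region in $\hat Z_h$ (up to an $O(h)$ perturbation), where the sharper third bound of Proposition~\ref{p:trdiffbds} is available.
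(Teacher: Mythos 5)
Your proposal is correct and follows essentially the same approach as the paper: reduce to radial cutoffs, diagonalize over the modes $\phi_j$, verify the $\im\rho_j > -Nh$ hypothesis, apply the first bound of Proposition~\ref{p:trdiffbds} for \eqref{eq:resest1}, and for \eqref{eq:resest2} split the modes into a bounded range (second bound) and a large-$j$ range where $\rho_j$ lies near the positive imaginary axis (third bound). The paper's split is by $h^2\sigma_j^2 \lessgtr E+5$ rather than by $|\rho_j(E\pm i0)| \lessgtr M$, but these are equivalent, and your extra observation that $\alpha_1+\alpha_2=1$ is handled by the second bound alone is a harmless simplification.
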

\begin{proof} 
We begin by noting that for any $j\in \Natural $, $\im \rho_j(E\pm i0) \geq 0$, and for $h^2\sigma_j^2>E$ we have $\rho_j(E\pm i0)\in i\Real_+$.
Hence if $d_h(z,E \pm i0)<Nh$, then $\im \rho_j(z)\geq -Nh$ and $\im \rho_j(z)\rightarrow \infty$
as $j\rightarrow \infty$.

Without loss of generality, we may assume $\chi$ is a function of $r$ only, so that we may consider $\chi$ as a function defined on $[0,\infty)$.
Using the expression (\ref{eq:R0}), we find that 
\[
\| \chi R_0(z)\chi- \chi R_0(E \pm i0)\chi\|_{L^2(X_0)\rightarrow L^2(X_0)}
 = \sup_j \| \chi \tR(\rho_j(z))\chi- \chi \tR(\rho_j(E \pm i0))\chi\|_{L^2(\Real_+)\rightarrow L^2(\Real_+)}.\]
Now \eqref{eq:resest1} follows directly from \eqref{e:rd1} and the definition \eqref{e:dhdef} of $d_h(z,E \pm i0)$.

To prove \eqref{eq:resest2},  we note that for $j$ sufficiently large we have $h^2 \sigma^2_j> E+5$, and $\pi/4<\arg \rho_j(z), \arg \rho_j(E \pm i0)< 3\pi/4$.   Using \eqref{e:rd2} when $h^2 \sigma^2_j\leq  E+5$ and \eqref{e:rd3}  when $ h^2 \sigma^2_j> E+5$,
along with the definition of $d_h(z,E \pm i0)$ proves \eqref{eq:resest2}.
\end{proof}

\subsection{The resonance free region} \label{s:resfree}
Throughout \S\ref{s:resfree},  we keep all of the assumptions of \S\ref{s:assump}, as well as the assumption that 
\[
 r \ge 6 \Longrightarrow V_L(r) = f(r) - 1 = 0.
\]

To show the existence of a resonance free region, we use an identity due to Vodev
\cite[(5.4)]{v}.  In \cite{v} the identity is stated only for operators which are potential perturbations of the Laplacian on $\Real^d$.  However, it in fact holds in far greater
generality for operators which are, in an appropriate sense, compactly supported perturbations of each other.   Here we state a version adapted to our circumstance.  

\begin{lem} \label{l:vodev}
(\cite[(5.4)]{v})  
Let $\chi _1\in C_c^{\infty}(X;[0,1])$ be such that $r \ge 6$ near
$\supp 1-\chi_1$.
 Choose $\chi\in C_c^\infty(X;[0,1])$ so that $\chi \chi_1=\chi_1$.
Then for $z,z_0\in \zhath$, 
\[\begin{split}
\chi R(z)\chi &- \chi R(z_0)\chi= (\pr(z)-\pr(z_0))\chi R(z)\chi \chi_1(2-\chi_1)\chi R(z_0)\chi \\
&+ (1-\chi_1-\chi R(z)\chi[h^2\Delta,\chi_1]) \left( \chi R_0(z)\chi-\chi R_0(z_0)\chi \right) (1-\chi_1+[h^2\Delta, \chi_1]\chi R(z_0)\chi).
\end{split}\]
\end{lem}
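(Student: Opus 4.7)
My approach is to start from the second resolvent identity $\chi R(z)\chi - \chi R(z_0)\chi = (\pr(z)-\pr(z_0))\chi R(z) R(z_0)\chi$ and insert the partition of unity $1 = \chi_1(2-\chi_1) + (1-\chi_1)^2$ between the two resolvent factors. The $\chi_1(2-\chi_1)$ piece immediately yields the first term on the right-hand side, since the hypothesis $\chi\chi_1 = \chi_1$ gives $\chi_1(2-\chi_1) = \chi \cdot \chi_1(2-\chi_1) \cdot \chi$. The heart of the proof is then to reshape $(\pr(z)-\pr(z_0))\chi R(z)(1-\chi_1)^2 R(z_0)\chi$ into the second term, which involves $\chi R_0(z)\chi - \chi R_0(z_0)\chi$ rather than a product $R(z) R(z_0)$.

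To accomplish this, I plan to derive two intertwining identities:
\[
R(z)(1-\chi_1) = (1-\chi_1) R_0(z) - R(z)[h^2\Delta,\chi_1] R_0(z),
\]
\[
(1-\chi_1) R(z) = R_0(z)(1-\chi_1) + R_0(z)[h^2\Delta,\chi_1] R(z).
\]
Both follow from the assumption (made at the start of \S\ref{s:continuation}) that $V_L = f-1 = 0$ on $\{r \ge 6\}$, a neighborhood of $\supp(1-\chi_1)$, so that $P$ and $P_0$ agree there after identifying the cylindrical end of $X$ with the corresponding part of $X_0$, and $[P_0, 1-\chi_1] = [h^2\Delta, \chi_1]$ (the commutator being a first-order operator whose coefficients are supported on $\supp\chi_1'$, where $P = P_0$). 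The first identity comes from applying $(P-z)$ to $(1-\chi_1)R_0(z)v$, comparing with $(P-z)R(z)(1-\chi_1)v = (1-\chi_1)v$, and applying $R(z)$; the second is the dual computation with $(P_0-z)$, comparing $(1-\chi_1)R(z)f$ with $R_0(z)(1-\chi_1)f$. The opposite signs are crucial and reflect whether $R(z)$ or $R_0(z)$ is peeled off last.

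Substituting these two identities into $R(z)(1-\chi_1)^2 R(z_0) = [R(z)(1-\chi_1)]\cdot[(1-\chi_1)R(z_0)]$ produces $\bigl[(1-\chi_1) - R(z)[h^2\Delta,\chi_1]\bigr] R_0(z) R_0(z_0) \bigl[(1-\chi_1) + [h^2\Delta,\chi_1] R(z_0)\bigr]$. Invoking the resolvent identity $(\pr(z)-\pr(z_0))R_0(z)R_0(z_0) = R_0(z) - R_0(z_0)$, flanking by $\chi$, and redistributing $\chi$'s so that each occurrence of $R_0(z)$ and $R_0(z_0)$ is sandwiched by $\chi$'s, yields the desired second term after expanding the outer products. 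The redistribution is legitimized by the elementary relations $\chi(1-\chi_1) = (1-\chi_1)\chi$ and $\chi[h^2\Delta,\chi_1] = [h^2\Delta,\chi_1] = [h^2\Delta,\chi_1]\chi$, the latter holding because $\chi \equiv 1$ on $\supp \chi_1' \supset \supp[h^2\Delta,\chi_1]$.

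The main obstacle I foresee is purely one of careful bookkeeping: tracking the asymmetric signs in the two intertwining identities and verifying that each $\chi$-insertion needed to match the precise operator-valued form on the right-hand side is legitimate. A minor additional point is that the intertwining identities and the $R_0$ resolvent identity hold a priori only in the physical region on compactly supported data; however, since both sides of the claimed identity define meromorphic operators from compactly supported $L^2$ to locally $L^2$ functions on $\zhath$, the identity propagates to all of $\zhath$ by analytic continuation.
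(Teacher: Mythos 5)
The paper omits the proof of this lemma, deferring to Vodev's original argument \cite[(5.4)]{v} (see also \cite[Lemma 6.26]{dz}). Your reconstruction is correct and follows essentially that same route: the two intertwining identities relating $R$ and $R_0$ through $[h^2\Delta,\chi_1]$, the second resolvent identity for $R_0$, the partition $1=\chi_1(2-\chi_1)+(1-\chi_1)^2$ inserted between $R(z)$ and $R(z_0)$, and redistribution of the cutoffs $\chi$ at the end. One small technical point in that last step: to conclude $[h^2\Delta,\chi_1]\chi=[h^2\Delta,\chi_1]$ you invoke $\chi\equiv 1$ on $\supp\chi_1'$, but since $[h^2\Delta,\chi_1]$ is first order one also needs $\nabla\chi$ to vanish where its coefficients are nonzero; this does hold (those coefficients, built from derivatives of $\chi_1$, vanish to infinite order on $\partial\{\chi_1>0\}$, the only place on $\supp\chi_1$ where $\nabla\chi$ could be nonzero), but it is tidier to simply choose $\chi\equiv 1$ on a neighborhood of $\supp\chi_1$, which the hypothesis $\chi\chi_1=\chi_1$ allows.
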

It is important to note in the identity above that
 $\chi R_0 \chi$ only appears where it is multiplied both on the left and 
right by an operator (either $1-\chi_1$ or 
$[h^2\Delta, \chi_1]$) supported in the set where $r \ge 6$.  If we think
of this set as a subset of $X_0 = [0,\infty)\times Y$, then the appearance of
 $\chi R_0\chi$ makes sense.

We omit the proof of Lemma \ref{l:vodev} because it is essentially the same as that of \cite[(5.4)]{v} (see also \cite[Lemma 6.26]{dz} and, for another version in the setting of cylindrical ends, \cite[Lemma~2.1]{cd}).

The proof we give of the following theorem follows the proof of 
\cite[Theorem 1.5]{v}, but we write it out in detail because it is short and to highlight the role of the estimates we proved in \S\ref{s:modelcyl}. 
\begin{thm}\label{thm:resonancefree}
With $\chi$ as in Lemma \ref{l:vodev}, using \eqref{e:2n-1} take constants $C$ and $\mu(h)$ such that 
$$\| \chi R(E\pm i0) \chi \|_{L^2(X)\rightarrow L^2(X)} \leq \frac{C}{\mu(h)},$$
where $E = E_0$ and $0<\mu(h)\leq h^{2}$.  Then there are constants 
$C'$, $\tilde{C}$  so that  for $h>0$ sufficiently small, $\chi R(z)\chi$ is analytic in $\{ z\in \zhath: d_h(z,E\pm i0)< C' \mu(h)\}$.  Moreover, in
this region the cutoff resolvent satisfies the estimate
$$ \| \chi R(z)\chi \|_{L^2(X)\to L^2(X)} \leq \frac{\tilde{C}}{\mu(h)},$$
with $\tilde{C}$ depending on $\chi$.
\end{thm}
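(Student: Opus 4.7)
The plan is to apply Vodev's identity (Lemma \ref{l:vodev}) with $z_0 = E\pm i0$ and rearrange it into a fixed-point equation for $T(z) := \chi R(z)\chi$. With the abbreviations $A := 1-\chi_1$, $B := [h^2\Delta,\chi_1]$, $T_0 := \chi R(E\pm i0)\chi$, and $W(z) := \chi R_0(z)\chi - \chi R_0(E\pm i0)\chi$, the identity expands to
\[
T(z) - T_0 = (\pr(z)-E)\,T(z)\,\chi_1(2-\chi_1)\,T_0 + AW(z)A + AW(z)BT_0 - T(z)BW(z)A - T(z)BW(z)BT_0,
\]
which, after collecting the terms involving $T(z)$ on the left, becomes $T(z)(I - G(z)) = F(z)$ with
\[
G(z) := (\pr(z)-E)\chi_1(2-\chi_1)T_0 - BW(z)A - BW(z)BT_0, \qquad F(z) := T_0 + AW(z)A + AW(z)BT_0.
\]
It then suffices to show $\|G(z)\| < 1$ and $\|F(z)\| \lesssim 1/\mu(h)$ for $z$ with $d_h(z,E\pm i0) < C'\mu(h)$: the Neumann series for $(I-G(z))^{-1}$ converges, yielding both the norm bound and the analyticity.

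The estimates on $F$ and $G$ combine three inputs: the hypothesis $\|T_0\| \le C/\mu(h)$; Lemma \ref{l:projdiff}, which gives $|\pr(z) - E| \le d_h(d_h + O(h^{1/2-\delta}))$; and Proposition \ref{p:resest}, which controls $(hD_r)^{\alpha_1}W(z)(hD_r)^{\alpha_2}$ between compactly supported cutoffs at a cost of $h^{-3}d_h$ (no derivatives) or $h^{-2}d_h$ (one or two derivatives). Using that $\chi(1-\chi_1) = \chi - \chi_1$ is compactly supported, and writing $B = h^2\chi_1'' + 2ih\chi_1'(hD_r)$ so that each factor of $B$ trades one power of $h$ for one derivative, straightforward bookkeeping yields $\|AW(z)A\| \lesssim h^{-3}d_h$, $\|BW(z)A\| + \|AW(z)B\| \lesssim h^{-1}d_h$, and $\|BW(z)B\| \lesssim d_h$. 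The main obstacle is the term $BW(z)BT_0$ in $G(z)$: even after both commutators contribute a derivative, one only obtains $\|BW(z)BT_0\| \lesssim d_h/\mu(h)$, and it is this that forces the restriction $d_h \le C'\mu(h)$ with $C'$ small. Under the same restriction the remaining terms in $G(z)$ are automatically small: $\|BW(z)A\| \lesssim h^{-1}d_h \le C'h$, while $|\pr(z)-E|\,\|T_0\| \lesssim d_h(d_h + h^{1/2-\delta})/\mu(h) \le C'(C'\mu(h) + h^{1/2-\delta})$, and both tend to $0$ as $h\to 0$ thanks to $\mu(h)\le h^2$.

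Choosing $C'$ small and then $h_1 > 0$ small so that each of the three contributions to $G(z)$ is at most $1/4$, we conclude $\|G(z)\| \le 3/4$ throughout the region. A parallel calculation bounds $\|F(z)\| \le \tilde C/\mu(h)$, where we use $h^{-1} \le 1/\mu(h)$ (a consequence of $\mu(h)\le h^2\le h$) to absorb the $h^{-3}d_h$ and $h^{-1}d_h/\mu(h)$ contributions into $C/\mu(h)$. The Neumann series then produces $T(z) = F(z)(I-G(z))^{-1}$ with norm at most $4\tilde C/\mu(h)$. Analyticity of $T(z)$ in the region follows because $R_0(z)$ continues holomorphically to $\zhath$, so $F(z)$ and $G(z)$ are holomorphic on the region and the Neumann series produces an analytic operator-valued function of $z$; equivalently, since $\chi R(z)\chi$ is already known to be meromorphic on $\zhath$, the uniform norm bound rules out any poles inside $\{z\in\zhath : d_h(z,E\pm i0) < C'\mu(h)\}$.
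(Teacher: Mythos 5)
Your proof is correct and follows essentially the same route as the paper: both start from Vodev's identity with $z_0 = E\pm i0$, bound the model-resolvent differences via Proposition~\ref{p:resest} (using the $h^{-2}$ gain when a commutator factor $[h^2\Delta,\chi_1]$ supplies a derivative), and absorb the terms containing $\chi R(z)\chi$ after restricting $d_h(z,E\pm i0)\lesssim\mu(h)$. The only differences are presentational — you package the absorption as an operator fixed-point equation $T(z)(I-G(z))=F(z)$ with a Neumann series rather than as a scalar inequality chain, and you invoke the full strength of Lemma~\ref{l:projdiff} for $|\pr(z)-E|$ where the paper settles for the cruder consequence $|\pr(z)-E|\le d_h$ — but the inputs, constants, and smallness requirements are identical.
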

\begin{proof}
We use the identity from Lemma \ref{l:vodev}, with $z_0=E\pm i0$.  Rearranging, we find (all norms here are $L^2(X) \rightarrow L^2(X)$)
\begin{align*}
\| \chi R(z)\chi\| & \leq \|  \chi R(E\pm i0)\chi\| + 2|\pr(z)-E| \| \chi R(z)\chi\|\| \chi R(E\pm i0)\chi \| \\ & 
+ \| (1-\chi_1)(\chi R_0(z)\chi-\chi R_0(E\pm i0)\chi)(1-\chi_1)\|   \\ & + \| \chi R(z)\chi\| \| [h^2\Delta,\chi_1]) (\chi R_0(z)\chi-\chi R_0(E\pm i0)\chi )(1-\chi_1)\| \\& + \| (1-\chi_1) \left(\chi(R_0(z)\chi-\chi R_0(E\pm i0)\chi\right)
[h^2\Delta, \chi_1\| \| \chi R(E\pm i0)\chi \|  \\ & + \| \chi R(z)\chi\| \| \chi R(E\pm i0)\chi\| \| [h^2\Delta,\chi_1] \left( \chi R_0(z)\chi-\chi R_0(E\pm i0)\chi \right) [h^2\Delta, \chi_1]\| .
\end{align*}
By writing this bound in this detailed fashion we hope to indicate the importance of the improved estimate (\ref{eq:resest2}) as compared to (\ref{eq:resest1}), 
so that, for example, \begin{multline}
\| [h^2\Delta,\chi_1] (\chi R_0(z)\chi-\chi R_0(E\pm i0)\chi )(1-\chi_1)\| 
\\ = \| [h^2\Delta,\chi_1] (R_0(z)\chi- R_0(E\pm i0)\chi )(1-\chi_1)\| \leq Cd_h(z,E\pm i0)/h .
\end{multline}
Using the bound on $\| \chi R(E\pm i0)\chi\| $ from the assumptions along with bounds of Proposition \ref{p:resest}, we find
\begin{align*} 
\| \chi R(z)\chi\| & \leq \frac{C}{\mu(h)} +\frac{C d_h(z,E\pm i0)} { \mu(h) }  \| \chi R(z)\chi\| +  \frac{Cd_h(z,E\pm i0)}{h \mu(h) } \\ & 
+ C d_h(z,E\pm i0) \left( \frac{1}{h}+\frac{1}{\mu(h)}\right)\| \chi R(z)\chi \|.
\end{align*}
Here we have also bounded $|\pr(z)-E |\leq d_h(z,E\pm i0)$, which is weaker than  the estimate from Lemma \ref{l:projdiff} since we will have
$d_h(z,E\pm i0)=O(\mu (h))$.
If we choose $C' $ sufficiently small, the coefficients of $\| \chi R(z)\chi\|$ on the right hand side above will be small enough that the terms with 
$\|\chi R(z)\chi\|$ can be absorbed in the left hand side, proving the result.
\end{proof}

\noindent\textbf{Acknowledgements.} The authors are grateful to Semyon Dyatlov, Plamen Stefanov, and Maciej Zworski for helpful discussions, and to the anonymous referees for many helpful comments, corrections, and suggestions. Thanks also to the Simons Foundation for support through the Collaboration Grants for Mathematicians program. KD was partially supported by the National Science Foundation grant DMS-1708511.

\end{document}